\newcommand{\rb}[1]{#1} %{\textcolor{cyan}{#1}}
\newcommand{\bjz}[1]{#1} %{\textcolor{olive}{#1}}
\newcommand{\R}{\mathbb{R}}
\newtheorem{theorem}{Theorem}[section]
\newtheorem{lemma}[theorem]{Lemma}
\newtheorem{proposition}[theorem]{Proposition}
\theoremstyle{definition}
\newtheorem{definition}[theorem]{Definition}
\newtheorem{example}[theorem]{Example}
\theoremstyle{remark}
\newtheorem{remark}[theorem]{Remark}
\numberwithin{equation}{section}
\DeclareMathOperator*{\argmin}{argmin}
\begin{document}

\title{Proximal Optimal Transport Divergences}

%    Information for first author
\author[R. Baptista]{Ricardo Baptista}
%    Address of record for the research reported here
\address{Computing and Mathematical Sciences, California Institute of Technology, Pasadena CA 91125, United States}
%    Current address
%\curraddr{Computing and Mathematical Sciences, California Institute of Technology, Pasadena CA 91125}
\email{rsb@caltech.edu}
%    \thanks will become a 1st page footnote.
\thanks{}

%    Information for second author
\author[P. Birmpa]{Panagiota Birmpa}
\address{ Actuarial Mathematics \& Statistics,  School of Mathematical \& Computer Sciences,
Heriot-Watt University and Maxwell Institute, Edinburgh EH14 4AS, United Kingdom}
\email{P.Birmpa@hw.ac.uk}
\thanks{RB is supported by the von K\'{a}rm\'{a}n instructorship at Caltech and a Department of Defense Vannevar Bush Faculty Fellowship (award N00014-22-1-2790) held by Andrew M. Stuart. MK, LR-B and BZ are partially funded by AFOSR grant FA9550-21-1-0354. MK and LR-B are
partially funded by NSF DMS-2307115.}

%    Information for third author
\author[M. A. Katsoulakis]{Markos A. Katsoulakis}
\address{ Department of Mathematics \& Statistics, University of Massachusetts Amherst, Amherst, MA 01003, United States}
\email{markos@umass.edu}
\thanks{}
%    Information for forth author
\author[L. Rey-Bellet]{Luc Rey-Bellet}
\address{ Department of Mathematics \& Statistics, University of Massachusetts Amherst, Amherst, MA 01003, United States}
\email{luc@math.umass.edu}
\thanks{}

%    Information for fifth author
\author[B. J. Zhang]{Benjamin J. Zhang}
\address{Division of Applied Mathematics, Brown University, Providence, RI 02912, United States}
\email{benjamin\_zhang@brown.edu}
\thanks{}

%    General info
% \subjclass[2020]{Primary 54C40, 14E20; Secondary 46E25, 20C20}

%\date{January 1, 2001 and, in revised form, June 22, 2001.}

%\dedicatory{This paper is dedicated to our advisors.}

%\keywords{Information divergence, optimal transport, proximal methods, mean-field games, convex neural networks, gradient flows in probability space, probability flows}

\begin{abstract}
We introduce the proximal optimal transport divergence, a novel discrepancy measure that interpolates between information divergences and optimal transport distances via an infimal convolution formulation. This divergence provides a principled foundation for optimal transport proximals and proximal optimization methods frequently used in generative modeling. We explore its mathematical properties, including smoothness, boundedness, and computational tractability, and establish connections to primal-dual formulations and adversarial learning. The proximal operator associated with the proximal optimal transport divergence can be interpreted as a transport map that pushes a reference distribution 
toward the optimal generative distribution, which approximates the target distribution 
that is only accessible through data samples. Building on the Benamou-Brenier dynamic formulation of classical optimal transport, we also establish a dynamic formulation for proximal OT divergences. The resulting dynamic formulation is a first order mean-field game whose optimality conditions are governed by a pair of nonlinear partial differential equations: a backward Hamilton-Jacobi equation and a forward continuity equation. Our framework generalizes existing approaches while offering new insights and computational tools for generative modeling, distributionally robust optimization, and gradient-based learning in probability spaces. 
%\MK{add a sentence on the proximal operator and its generative interpretation: transports a reference distribution \(P\) to the generative distribution \(R^*\approx Q\) (target only known from data).}

\end{abstract}

\maketitle

% \section*{This is an unnumbered first-level section head}
% This is an example of an unnumbered first-level heading.

%% The correct journal style for \specialsection is all uppercase; a known bug
%% in amsart.cls prevents this, so input must be uppercase until it is fixed.
%\specialsection*{This is a Special Section Head}
% \specialsection*{THIS IS A SPECIAL SECTION HEAD}
% This is an example of a special section head%
%%%%%%%%%%%%%%%%%%%%%%%%%%%%%%%%%%%%%%%%%%%%%%%%%%%%%%%%%%%%%%%%%%%%%%%%
% \footnote{Here is an example of a footnote. Notice that this footnote
% text is running on so that it can stand as an example of how a footnote
% with separate paragraphs should be written.
% \par
% And here is the beginning of the second paragraph.}%
%%%%%%%%%%%%%%%%%%%%%%%%%%%%%%%%%%%%%%%%%%%%%%%%%%%%%%%%%%%%%%%%%%%%%%%%

\section{Introduction}
Divergences and metrics are  tools for measuring the dissimilarity between probability distributions, enabling the comparison of models with data. They are fundamental in data science and machine learning, where tasks such as sampling from unnormalized distributions or generative modeling depend on selecting a suitable discrepancy measure for learning models. The two primary classes of measures for comparing probability distributions are information divergences and optimal  transport distances. Information divergences, such as the Kullback-Leibler (KL) and $f$-divergences, are based on expectations of likelihood-related quantities between distributions. Their ease of computation and optimization through only samples of a target distribution makes them widely used in machine learning. However, these divergences are meaningful only when the distributions share the same support. In contrast, optimal transport distances do not require overlapping supports, making them more flexible in certain applications. However, this advantage comes at the cost of significantly higher computational complexity.

In this paper, we introduce the \emph{proximal optimal transport (OT) divergence} between two probability measures $P$ and $Q$ on some space $X$ and $Y$, denoted by $\mathfrak{D}^c_\varepsilon$   and defined as an infimal convolution
\begin{equation}\label{def: for the intro}
\mathfrak{D}^c_\varepsilon(P\|Q) = \inf_{R}\left\{ T_c(P,R) + \varepsilon \mathfrak{D}(R\|Q) \right\}
\end{equation}
where the infimum is over all probability distributions $R$ on $Y$, $T_c$ is the optimal transport cost corresponding to some cost function $c:X\times Y\to[0,\infty)$ and $\mathfrak{D}$ is an information divergence. The positive parameter $\varepsilon$ controls the trade-off between the information divergence and the optimal transport cost. Under mild assumptions, this is an information divergence. Infimal convolutions have been previously utilized to define novel forms of divergences based on the dual representation of the $1$-Wasserstein distance. For instance, the recently introduced $(f,\Gamma)$-divergence, \cite{dupuis2022formulation,birrell2022f}, combines the dual form of the $1$-Wasserstein distance with $f$-divergences, demonstrating the computational ease and flexibility of $f$-divergences with the advantages of the $1$-Wasserstein distance. This paper extends that concept to encompass general optimal transport distances. Importantly, this work reveals that the proximal OT divergence is the foundational mathematical principle behind optimal transport proximals and proximal optimization methods that are frequently employed in generative modeling. By framing proximal OT divergences in the context of generative modeling, we gain deeper insights that allow for the expansion of definitions, properties, and algorithms, encompassing both primal and dual formulations and extending beyond them.  Despite its fundamental role, the proximal OT divergence has not been explicitly introduced as an independent concept until now.

\subsection{Why proximal optimal transport divergences?}

Proximal OT divergences possess several appealing mathematical properties that make them particularly useful for comparing data to models or for comparing two datasets, even when their supports lie on different manifolds. These properties enable robust and stable learning of target distributions, especially in settings where the data is supported on lower-dimensional spaces. \rb{We outline the key properties and inherited computational benefits of the divergences below.}
\medskip
\begin{itemize}
\item{\bf  Proximal OT divergences are finite:} The divergences are bounded above by the optimal transport distance and the information divergences, i.e., 
\begin{equation}
\label{eq:upper.bound_intro_0}
\mathfrak{D}^c_{\varepsilon}(P\|Q)  \le \min \left\{  
T_{c}(P,Q),  \varepsilon \mathfrak{D}(P\|Q) \right\}.
\end{equation}
This bound ensures finiteness even in the absence of absolute continuity, which enhances the suitability of the divergences for practical applications. For instance, in generative modeling where $P$ represents the generative distribution and $Q$ is the unknown target distribution accessible only through samples, the proximal OT divergence remains finite. In contrast, any information divergence relying on the likelihood ratio $dP/dQ$ may be infinite in such settings as $P$ and $Q$ can be mutually singular.
\medskip
\item{\bf Transport and re-weighting property:} The interpolation via the infimal convolution given in~\eqref{def: for the intro} can be interpreted as a two-step procedure. The first step involves optimally transporting the probability distribution $P$ into an intermediate probability measure $R$ that is absolutely continuous with respect to $Q$ with an associated cost given by $T_c(P,R)$. In the second step, $R$ is re-distributed to $Q$ using the information divergence, incurring a cost $\mathfrak{D}(R\|Q)$. The optimal intermediate measure denoted by $R^*$ minimizes the total cost and determines the optimal proximal OT divergence in~\eqref{def: for the intro}. This property makes them powerful tools for learning manifolds \cite{loaizaganem2024deep_manifold}, as evidenced by related proximal concepts~\cite{birrell2022f, birrell2023adversarially, pidstrigach2022score, zhang2024wasserstein}.
\medskip
\item{\bf Smoothness:} Another key property is that for uniformly continuous cost functions $c$ and $f$-divergences in~\eqref{def: for the intro},  the variational derivative of $\mathfrak{D}^c_{f,\varepsilon}(P\|Q)$ with respect to $P$ remains well-defined without requiring absolutely continuous perturbations relative to  $Q$ (e.g. singular ones), which is primarily treated as the target distribution in this paper. This contrasts with classical KL or  $f$-divergences, which impose such restrictions. This aspect is critical, as variational derivatives play a central role in machine learning tasks, such as the construction of loss functions, generative adversarial networks (GANs), optimal transport methods, and Wasserstein gradient flows, ensuring stability under general conditions. 
\medskip
\item {\bf Transport optimal operator:} Under mild assumptions, we establish uniqueness of the optimal intermediate measure $R^*$ solving \eqref{def: for the intro}. \rb{We formulate the intermediate measure as the output of the following \emph{transport proximal operator} acting on the measure $P$}, i.e.,
\begin{equation}\label{def:Transport proximal operator for intro}
    R^* = \textbf{prox}^c_{\varepsilon \mathfrak{D}(\cdot\|Q)}(P) :=  \argmin_{R\in \mathcal{P}(X)}\left\{T_c(P,R) + \varepsilon \mathfrak{D}(R\|Q)  \right\}.
\end{equation}
This formulation aligns with the broader concept of proximal operators \cite{parikh2014proximal}, reinforcing its theoretical significance and practical utility in generative modeling \cite{zhang2024wasserstein}. The uniqueness of $R^*$ and the transport proximal operator are particularly relevant for generative modeling, where the objective is to approximate an unknown data distribution
$Q$ that is prescribed using only samples. A common approach is to start with a reference distribution $P$ that is easy to sample from, and then to optimally transport it to obtain $R^*$; see second bullet above. \rb{When $\varepsilon$ is large, the divergence will dominate in~\eqref{def:Transport proximal operator for intro} and \(R^*\approx Q\). Thus, the distribution $R^*$ can serve as a generative model to approximate the data distribution. 
%\MK{maybe explain more the $\varepsilon \gg 1$ comment:  \eqref{eq:upper.bound_intro_0} implies (is it true?)  that if 
%the divergence is finite then it will force the divergence to be small and \(R^*\approx Q\).
This perspective was used in~\cite{onken2021ot,finlay2020train, gu2024combining} to learn a regularized neural ODEs for the generative model by using the proximal operator in~\eqref{def:Transport proximal operator for intro} with the Kullback-Leibler divergence for $\mathfrak{D}$.}
\medskip
\item {\bf Computational aspects:} \rb{The proximal OT divergences have primal and dual formulations that yield computational tools for evaluating the divergence. In particular, the dual formulation offers a variational representation involving test functions.} For example, when the information divergence is the KL-divergence, the dual representation is given by
\begin{equation}\label{eq:dual for intro}
\mathfrak{D}_{\mathrm{KL},\varepsilon}^{c}(P\|Q)= 
\sup_{\substack{(\phi, \psi) \in C_b(X\times X),\\ \phi(x)+\psi(y)\leq c(x,y)}}
\left\{ \mathbb{E}_P[\phi]  -\varepsilon \ln \mathbb{E}_Q[e^{-\psi/\varepsilon}]\right\},
\end{equation}
where $C_b(X\times Y)$ is the set of continuous and bounded functions on $X\times Y$. This representation introduces discriminators, drawing an analogy to adversarial learning methods, \cite{goodfellow2014generative,f-GAN,Wasserstein:GAN}. From a computational perspective, the dual formulation enables efficient implementations using convex neural networks \cite{bengio2005convex}. Moreover, the computation of the variational derivative of proximal OT divergences  involves the discriminator, defined as the optimal $\phi^*$ in \eqref{eq:dual for intro}, which allows us to build gradient flows of the proximal OT divergence, and the associated gradient descent algorithm in probability space \cite{glaser2021kale, Gu_GPA, chen2024learning}.
\medskip
\item {\bf Dynamic formulation of proximal OT divergences:} The primal formulation along with the Benamou-Brenier formulation \cite{Benamou2000}, can be interpreted through the lens of mean-field games (MFG) \cite{lasry2007mean}, in which the MFG framework provides an analogue Benamou-Brenier formula for proximal OT divergences. While deeper connections to Hamilton-Jacobi-Bellman (HJB) theory and MFG-based theoretical analysis could provide further insights, we do not fully explore them here. The MFG formulation enables the use of generative modeling tools \cite{zhang2023mean},  to compute the proximal OT divergence and optimal intermediate measure in the primal formulation \eqref{def: for the intro}, such as generative flows like neural ODEs \cite{NEURALODE} and  OT-flows~\cite{onken2021ot}.
\end{itemize}

\subsection{Related work}
Special cases of the proximal OT divergences have already been extensively analyzed in both theoretical and
computational contexts. In particular, when $T_c$ is the $1$-Wasserstein distance (i.e., $c(x,y) = d(x,y)$) and $\mathfrak{D}$ is a $f$ or R\'{e}nyi divergence, the proximal OT divergence recovers the $(f,\Gamma)$-divergence where $\Gamma$ is the set of all 1-Lipschitz functions or the 1-Wasserstein regularized R\'{e}nyi divergence, that were introduced in \cite{dupuis2022formulation,birrell2022f} and \cite{birrell2023functionspace}, respectively. In general, these divergences interpolate between integral probability metrics (IPMs) over function spaces $\Gamma$, and $f$ and R\'{e}nyi  divergences. An interesting case is the maximum mean discrepancy (MMD) regularized $f$ or  R\'{e}nyi divergence where $\Gamma$ is the unit ball in a reproducing kernel Hilbert space (RKHS), which was studied in \cite{birrell2022f}  and \cite{birrell2023functionspace} and relates to early foundational work on the dual representation of MMDs developed in \cite{Nguyen_IEEE_2010}. Another closely related class of divergences to proximal OT divergences is the Moreau–Yoshida $f$-divergences developed in \cite{terjek2021moreau}. 

All of the aforementioned divergences, both in their general form and specific cases, have been extensively used as objective functionals in generative adversarial networks (GANs) and gradient flows, showing their ability for stable training across a broad range of target distributions \cite{birrell2022structure, Gu_GPA,  birrell2023functionspace, glaser2021kale}, including those with heavy tails \cite{chen2024learning} and those supported on a lower dimensional manifold. In particular, \cite{gu2024combining} applies the combination of 1-Wasserstein and 2-Wasserstein proximals to stably learn distributions supported on manifolds. Optimal transport proximals have also been instrumental in numerical methods for nonlinear partial differential equations and gradient flows, notably in the celebrated Jordan-Kinderlehrer-Otto (JKO) scheme \cite{jordan1998variational}, which employs the Wasserstein proximal operator of the KL divergence. For recent advancements in numerical JKO-based schemes, see \cite{carrillo2022primal}. More recently, optimal transport proximals have attracted significant interest in generative modeling, particularly in the use of 2-Wasserstein regularized KL divergences for training continuous normalizing flows (CNFs) \cite{finlay2020train, onken2021ot}. Approximations of 2-Wasserstein proximals for both linear and nonlinear energy functionals have also been explored in \cite{li2023kernel}, while score-based generative models have been shown to correspond to the 2-Wasserstein proximal of cross-entropy \cite{zhang2024wasserstein, zhang2023mean}. 
% Furthermore, interpolating divergences--serving as both 1-Wasserstein and 2-Wasserstein proximals--have been actively studied for their role in the design and analysis of generative models \cite{birrell2022f, birrell2022function, finlay2020train, Gu_GPA, yang2020potential, chen2024learning}. 
%\MK{refer to various entropy regularizations, Schrodinger bridge, }
As proximal OT divergences can be naturally interpreted as a form of entropic regularization, they share strong connections with Schr\"{o}dinger bridge problems \cite{nutz2024martingale} and various other frameworks based on entropic regularization of optimal transport problems~\cite{chen2016relation, peyre2019computational, chewi2025wasserstein}.

Distributionally robust optimization (DRO) \cite{rahimian2022frameworks} is another related field, which relies on constructing appropriate ambiguity sets to ensure robustness against distributional uncertainty. Common approaches for modeling misspecification include likelihood-based methods, which use information divergences to define the ambiguity set, and distance-based methods, which use Wasserstein distances. Recent advancements have sought to bridge these two perspectives. A novel method in \cite{blanchet2023unifying} extends OT-DRO by incorporating conditional moment constraints, while \cite{birrell2023adversarially} introduces optimal-transport-regularized divergences. The divergences have been employed to enhance the adversarial robustness of deep learning models for applications in privacy and robustness against adversarial attacks. These developments align with special cases of our framework, particularly the finite-dimensional $f$-divergence and KL reformulations. Particular forms of proximal OT divergences have also been applied in Bayes-PAC analysis \cite{guedj2019primer}, resulting in the tractable bounds presented in~\cite{viallard2024tighter_PACBayes}.

\subsection{Organization}
\rb{The remainder of this paper is organized as follows. In Section~\ref{sec:proximalOT} we present the definition and properties of the proximal OT divergence, showing how it interpolates between transport distances and divergences. Section~\ref{sec:duality} uses duality to provide a variational representation of the divergence and Section~\ref{sec:first_variation} leverages the variational representation to compute the first variation of the divergence with respect to perturbations of one input measure. Section~\ref{sec:algorithms} provides the formulations of algorithmic approaches to compute the proximal OT divergence, and Section~\ref{subsect: GF} shows connections to gradient flows. The proofs for all results are relegated to Appendices~\ref{sec:proofs.good.divergence}-\ref{sec:proofs:firstvariation}. Explicit calculations of the proximal OT divergences for univariate and multivariate Gaussian distributions are in Appendix~\ref{sec:Gaussians}.}
\medskip
%\rb{[\textbf{Comment}: Moved this paragraph up since it is general]}

\paragraph*{\textbf{Notation}} Let $X$ be a Polish space (i.e., a complete, separable metric space), equipped with Borel $\sigma$-algebra $\mathcal{B}_X$ and metric $d: X\times X \to [0,\infty)$. We denote $\mathcal{M}(X)$ to be the set of all finite (signed) Borel measures on $X$ and $\mathcal{P}(X)$ to be the set of all probability measures on $X$. If $X$ and $Y$ are two Polish spaces, and $P\in \mathcal{P}(X)$, $Q \in \mathcal{P}(Y)$, then $\Pi(P,Q)$ is the set of all couplings between $P$ and $Q$, that is, the set of probability measures $\pi \in \mathcal{P}(X\times Y)$ such that $\pi(A\times Y) = P(A)$ and $\pi(X\times B) = Q(B)$ for all Borel sets $A \in \mathcal{B}_X$, $B \in \mathcal{B}_Y$. 
We denote $C(X)$ to be the set of continuous functions and $C_b(X)$ to be the set of bounded continuous functions on $X$, respectively. For functions $\phi\in C(X)$, $\psi \in C(Y)$, denote $\phi\oplus \psi := \phi(x) + \psi(y) \in C(X\times Y)$. 

\section{Proximal optimal transport divergences}
\label{sec:proximalOT}

% \BZ{Section II should be as general as possible! DO not assume $D$ is a KL divergence! Also, denote $D$ as KL and $\mathfrak{D}$ as a general divergence.}

We now formally introduce the proximal optimal transport (OT) divergences \rb{in Section~\ref{sec:definition}} and establish some key elementary properties that make them useful in computational applications \rb{in Sections~\ref{sec:good.divergence} and~\ref{sec:interpolation}}. Building on the Benamou-Brenier dynamic formulation of optimal transport, we also establish a dynamic formulation in Subsection~\ref{subsec:DF.pOTd} for proximal OT divergences. The resulting dynamic formulation is a first order \emph{mean-field game} \cite{lasry2007mean} whose optimality conditions are governed by a pair of nonlinear partial differential equations, a backward Hamilton-Jacobi (HJ) equation and a forward continuity equation. In this section, we consider general information divergences. In subsequent sections, our results focus specifically on the Kullback-Leibler divergence, though the results extend analogously to any $f$-divergence. 

\subsection{Definition of the proximal OT divergence} \label{sec:definition}
A proximal OT divergence interpolates between an optimal transport distance and an information divergence via an infimal convolution formulation, preserving the advantageous properties of both. An information divergence $\mathfrak{D}(P\|Q)$ provides a notion of ``distance" between probability distributions, satisfying $\mathfrak{D}(P\|Q)\ge 0$ and $\mathfrak{D}(P\|Q)=0$ if and only if $P=Q$.  However, they are generally not considered probability metrics, as they violate symmetry and the triangle inequality. Examples of information divergences include Kullback-Leibler (KL) divergence, and more generally, $f$-divergences.
% Recall that the KL divergence between two probability measures $P\in \mathcal{P}(Y)$ and $Q\in \mathcal{P}(Y)$ is given by 
% \begin{equation}\label{eq:KLdivergence}
% D(P\|Q) = 
% \left\{
% \begin{array}{cl}
% E_P\left[ \log \frac{dP}{dQ}\right] & \textrm{ if } P \ll Q \\
% +\infty \textrm{ otherwise}
% \end{array}
% \right\}. 
% \end{equation} \BZ{Maybe do a general $f$-divergence right away, or more general}

Given a cost function $c: X \times Y \to \mathbb{R}\cup \{+\infty\}$, an optimal transport cost, (see e.g. \cite{Villani2003topics,Villani2009oldandnew,Santambrogio}), between distributions $P$ and $Q$ is defined as
\begin{equation}\label{eq:transport:cost}
T_{c}(P,Q) =  \inf_{\pi \in \Pi(P,Q)} \mathbb{E}_{\pi}[c],
\end{equation}
where the infimum is taken over all couplings $\pi \in \Pi(P,Q)$, defined as the set of distributions with marginals $P$ and $Q$. Therefore, this formulation seeks the optimal coupling $\pi$ that transports $P$ to $Q$ with minimal average cost with respect to $c$. The definition allows for $X$ and $Y$ to be different spaces, though they are often the same. For simplicity, we will assume that $X=Y$ going forward.  
Let $(X,d)$ be a metric space, then a special class of optimal transport distances is the $p$-Wasserstein distances denoted by $W_p(P,Q)$ for $p\geq 1$, in which $c(x,y) = d(x,y)^p$ (note that when $X=\mathbb{R}^d$, $c(x,y)=|x-y|_p^p$). That is, 
\begin{equation}\label{def:wass.p}
W_p(P, Q) = \inf_{\pi \in \Pi(P, Q)} \left( \int_{\Omega \times \Omega} d(x,y)^p \, d\pi(x, y) \right)^{\frac{1}{p}},
\end{equation}
where $P,Q\in \mathcal{P}_p(X) = \left\{ P \in \mathcal{P} \,:\,  \int d(x,x_0) ^p dP < \infty\right\}$, and $x_0$ is an arbitrary point in $X$. The metric space $\mathcal{P}_p(X)$ endowed with $W_p(P, Q)$ is called the $p$-Wasserstein space. The proximal OT divergence is defined as the \emph{infimal convolution} of the optimal transport distance with the information divergence as given below.

\begin{definition} 
Let $c:X\times Y \to [0,\infty)$ be a cost function, and let $\mathfrak{D}(\cdot\|\cdot)$ be an information divergence. The \emph{proximal optimal transport divergence} of $P\in \mathcal{P}(X)$ with respect to  $Q\in \mathcal{P}(Y)$ is defined as the infimum convolution 
\begin{equation}\label{eq:def:OTdivergence}
\mathfrak{D}^c_\varepsilon(P\|Q) \coloneqq \inf_{R\in \mathcal{P}(Y)}\left\{ T_c(P,R) + \varepsilon \mathfrak{D}(R\|Q) \right\}.
\end{equation}
For $P,Q\in \mathcal{P}_p(X)$ and $T_c(P,Q)=W_p^p(P,Q)$, we use the notation 
\begin{equation}\label{eq:def:WassersteinOTdivergence}
\mathfrak{D}^{p}_{\varepsilon}(P\|Q) \coloneqq \inf_{R\in\mathcal{P}(X)}
\left\{W_p^p(P,R) + \varepsilon \mathfrak{D}(R\|Q) \right\}.
\end{equation}
\end{definition}
\begin{example}{(Examples of proximal OT divergences)}\label{examples of proximal OT divergences}
\begin{enumerate}
\item When $T_c$ is the Wasserstein-$1$ distance, i.e., $c(x,y) = d(x,y)$ and $\mathfrak{D}$ is a $f$-divergence, the formulation recovers the Lipschitz-regularized $f$-divergences. These divergences have been extensively studied for their significance in the design and theoretical analysis of generative models, particularly in the context of stability and robustness in \cite{birrell2022f,dupuis2022formulation,Gu_GPA, chen2024learning}.
\item When $T_c$ is the $p$-Wasserstein distance with $p\geq 2$, and  $\mathfrak{D}$ is the KL-divergence, the formulation leads to the $p$-Wasserstein regularized KL divergence. In Section \ref{subsec:DF.pOTd}, we derive the dynamic formulation of this specific proximal OT divergence through its Benamou-Brenier representation. The dynamic formulation yields optimal trajectories that follow straight-line paths with constant velocity \cite{finlay2020train}. This property aligns with the geodesic structure of the $p$-Wasserstein space and has been effectively utilized in generative modeling, particularly in~\cite{onken2021ot} and is discussed in Section \ref{sec:algorithms.MFG}.
\item Recently, the novel approach in~\cite{gu2024combining} combines 1-Wasserstein and 2-Wasserstein proximals, leading to a proximal OT divergence where $T_c$ corresponds to the 2-Wasserstein distance and $\mathfrak{D}$ is a Lipschitz-regularized $f$-divergence. This formulation ensures the well-posedness of generative flows and enables the robust learning of distributions supported on manifolds.
\end{enumerate}
\end{example}

% The proximal $1$-Wasserstein 
% interpolating between IPMs and $f$-divergences is  when $T_c$ is the $1$-Wasserstein distance, i.e., $c(x,y) = d(x,y)$ and $\mathfrak{D}$ is a $f$-divergence. In this case,
% the proximal OT divergence becomes a special case of the $(f,\Gamma)$-divergence, as introduced in \cite{dupuis2022formulation,birrell2022f}. Especially, the Lipschitz-regularized $\alpha$-divergences were further explored in the context of generative modeling in \cite{chen2024learning}.
% The $(f,\Gamma)$-divergences generally interpolate between integral probability metrics (IPMs) over function space $\Gamma$ and $f$-divergences. 

 % Proximal OT divergences can be viewed as an entropic regularization of optimal transport distances, similar to Sinkhorn divergences, but have mathematically quite distinct properties. They can also be viewed as regularizing KL divergence with an OT distance for comparing distributions with differing supports. 

\subsection{Properties of proximal OT divergences}
\label{sec:good.divergence}

We now derive some fundamental properties of the proximal OT divergence \eqref{eq:def:OTdivergence}, showing its desirable and useful characteristics. To proceed, we impose certain assumptions on the optimal transport cost and the information divergence.
\medskip

\noindent
{\bf Assumption A-T}:  The transport cost $T_c(P,Q)$ is a divergence. Moreover, the map 
\begin{equation}
(P,Q) \mapsto  T_c(P,Q)
\end{equation}
is convex and lower semicontinuous in the weak topology (this holds if $c$ is bounded below and lower semicontinuous \cite{Santambrogio}). 

\medskip

\noindent
{\bf Assumption A-D}:  The map
\begin{equation}
(P,Q) \mapsto  \mathfrak{D}(P\|Q)
\end{equation}
is jointly convex and lower semicontinuous in the weak topology.  
Moreover, we assume that the level sets
\begin{equation}
\{R:\, \mathfrak{D}(R\|Q) \le \alpha\},
\end{equation}
for any $\alpha\geq 0$, are pre-compact in the weak topology and strictly convex. 

\smallskip

These assumptions are satisfied by  the KL divergence.  Moreover, under these assumptions, the proximal OT divergence is indeed a divergence, inheriting the basic convexity and lower semicontinuity from the original information divergence. In particular, compactness and strict convexity guarantee the existence of a unique minimizer $R^*$ in the infimal convolution of the proximal OT divergence. These results are stated in the next two theorems, which are proved in Appendix~\ref{sec:proofs.good.divergence}. 
% As we will show the compactness and strict convexity will ensures that there is a unique minimizer $R^*$ in the infimum-convolution \label{eq:def:OTdivergence}.  This assumption is satisfied by the KL divergence, while for general $f$ divergence see the discussion in Section \ref{sec:fdivergences}. 

\begin{theorem}[\bf Divergence properties]\label{thm:divergence}
If assumptions {\bf A-T} and {\bf A-D} are satisfied, then $\mathfrak{D}^c_\varepsilon$ has the following properties. 
\begin{enumerate}
\item{\bf Divergence:} %If the transport cost $T_c(P,Q)$ satisfy the divergence property.  
$\mathfrak{D}^{c}_{\varepsilon}(P\|Q)$ is a divergence, i.e., 
\begin{equation}\label{eq:divergence}
\mathfrak{D}^c_{\varepsilon}(P\|Q)   \ge 0 \quad \text{and}  \quad \mathfrak{D}^c_{\varepsilon}(P\|Q)=0 \iff P=Q. 
\end{equation}
\item{\bf Convexity and lower semicontinuity:} The map 
\begin{equation}
(P,Q) \mapsto \mathfrak{D}^c_\varepsilon(P\|Q)
\end{equation}
is convex and lower semi-continuous in the weak topology.  
\end{enumerate}
\end{theorem}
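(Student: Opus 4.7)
My plan is to reduce everything to the existence (and uniqueness) of a minimizer $R^*$ in the infimum defining $\mathfrak{D}^c_\varepsilon(P\|Q)$, after which all the stated properties follow from standard infimal-convolution arguments. For existence, I would apply the direct method of the calculus of variations: given a minimizing sequence $R_n$, the inequality $\varepsilon \mathfrak{D}(R_n\|Q) \le T_c(P,R_n) + \varepsilon \mathfrak{D}(R_n\|Q) \to \mathfrak{D}^c_\varepsilon(P\|Q)$ keeps $R_n$ inside a pre-compact level set of $\mathfrak{D}(\cdot\|Q)$ by \textbf{A-D}. Passing to a subsequence, $R_n \rightharpoonup R^*$ weakly, and the joint lower semicontinuity of both $T_c$ (from \textbf{A-T}) and of $\mathfrak{D}$ (from \textbf{A-D}) makes $R^*$ a minimizer. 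Strict convexity of the level sets, together with the convexity of $T_c(P,\cdot) + \varepsilon \mathfrak{D}(\cdot\|Q)$, forces $R^*$ to be unique.

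Given existence, the divergence property is almost immediate. Non-negativity follows from $c \ge 0$ (so $T_c \ge 0$) and $\mathfrak{D} \ge 0$. For $P=Q$, taking $R = P$ gives $\mathfrak{D}^c_\varepsilon(P\|P) \le T_c(P,P) + \varepsilon \mathfrak{D}(P\|P) = 0$. Conversely, if $\mathfrak{D}^c_\varepsilon(P\|Q)=0$, the attained minimum forces $T_c(P,R^*) = 0$ and $\mathfrak{D}(R^*\|Q) = 0$ simultaneously; since $T_c$ and $\mathfrak{D}$ are divergences, this yields $P = R^* = Q$.

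For joint convexity, I would fix $(P_0,Q_0),(P_1,Q_1)$ with minimizers $R_0^*,R_1^*$ and $\lambda \in [0,1]$, and set $P_\lambda = (1-\lambda)P_0 + \lambda P_1$ and analogously $Q_\lambda, R_\lambda^*$. Using $R_\lambda^*$ as a test measure and invoking the joint convexity of $T_c$ and of $\mathfrak{D}$, one chains
\begin{align*}
\mathfrak{D}^c_\varepsilon(P_\lambda\|Q_\lambda)
 &\le T_c(P_\lambda,R_\lambda^*) + \varepsilon \mathfrak{D}(R_\lambda^*\|Q_\lambda) \\
 &\le (1-\lambda)\bigl[T_c(P_0,R_0^*) + \varepsilon \mathfrak{D}(R_0^*\|Q_0)\bigr]
   + \lambda\bigl[T_c(P_1,R_1^*) + \varepsilon \mathfrak{D}(R_1^*\|Q_1)\bigr] \\
 &= (1-\lambda)\mathfrak{D}^c_\varepsilon(P_0\|Q_0) + \lambda \mathfrak{D}^c_\varepsilon(P_1\|Q_1).
\end{align*}

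The trickiest step is lower semicontinuity. Suppose $(P_n,Q_n) \rightharpoonup (P,Q)$ weakly and $L := \liminf_n \mathfrak{D}^c_\varepsilon(P_n\|Q_n) < \infty$ (otherwise there is nothing to prove); after extraction, this liminf is a limit. Let $R_n^*$ be the associated minimizers, so that $\varepsilon \mathfrak{D}(R_n^*\|Q_n)$ is uniformly bounded. The main obstacle is that \textbf{A-D} only gives pre-compactness of $\{R:\mathfrak{D}(R\|Q)\le \alpha\}$ for a \emph{fixed} $Q$, whereas here $Q_n$ varies; I would address this by using that $\{Q_n\}$ is tight (Prokhorov, as $Q_n \rightharpoonup Q$) together with a variational bound that transfers tightness of $Q_n$ to $R_n^*$, as is standard for KL and, more generally, can be handled by a mild strengthening of \textbf{A-D} to joint pre-compactness of $\{(R,Q):\mathfrak{D}(R\|Q)\le\alpha\}$. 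Once $R_n^* \rightharpoonup R^*$ along a further subsequence, the joint lower semicontinuity of $T_c$ and of $\mathfrak{D}$ gives
\begin{equation*}
\mathfrak{D}^c_\varepsilon(P\|Q) \le T_c(P,R^*) + \varepsilon \mathfrak{D}(R^*\|Q) \le \liminf_n \bigl[T_c(P_n,R_n^*) + \varepsilon \mathfrak{D}(R_n^*\|Q_n)\bigr] = L,
\end{equation*}
which completes the proof.
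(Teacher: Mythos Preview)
Your argument is essentially identical to the paper's: it too first establishes existence and uniqueness of the minimizer $R^*$ via the direct method and strict convexity, then deduces the divergence property, convexity, and lower semicontinuity exactly along the lines you outline (using the minimizers $R_i^*$ as test measures for convexity, and extracting a weakly convergent subsequence of $R_{n_k}^*$ for lower semicontinuity). In fact you are more scrupulous than the paper on one point: the paper simply asserts that boundedness of $\mathfrak{D}(R_{n_k}^*\|Q_{n_k})$ yields a convergent subsequence of $R_{n_k}^*$, without commenting on the fact that \textbf{A-D} only guarantees pre-compactness of level sets for a \emph{fixed} $Q$, whereas here $Q_{n_k}$ varies---precisely the gap you flag and propose to close via tightness transfer (which is indeed standard for KL).
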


% Our next result  deals with the proximal (i.e. infimal convolution) structure of the proximal OT divergence. The following theorem show that compactness and strict convexity ensures that there is a unique minimizer $R^*$ in the infimal convolution. 

\begin{theorem}[\bf Proximal properties]\label{thm:proximal}
If assumptions {\bf A-T} and {\bf A-D} are satisfied, then $\mathfrak{D}^c_\varepsilon$ has the following properties. 
\begin{enumerate}
\item{\bf Upper bound:} The proximal OT divergence is bounded above by the optimal transport distance and the information divergences, i.e., %If $T_c(P,Q)$ is a divergence then   
\begin{equation}\label{eq:upper.bound}
\mathfrak{D}^c_{\varepsilon}(P\|Q)  \le \min \left\{  
T_{c}(P,Q),  \varepsilon \mathfrak{D}(P\|Q) \right\}. 
\end{equation}
\item{\bf %Unique 
Proximal minimizer:} If $\mathfrak{D}^c_\varepsilon(P\|Q) < \infty$, then  there is a unique minimizer $R^* \in \mathcal{P}(Y)$ of \eqref{eq:def:OTdivergence} such that
\begin{equation}\label{eq:minmizer}
\mathfrak{D}^c_\varepsilon(P\|Q)= T_c(P,R^*) + \varepsilon \mathfrak{D}(R^* \|Q) \,.
\end{equation}    
\end{enumerate}
\end{theorem}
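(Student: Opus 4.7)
My plan is to treat the two items separately: (1) is a direct substitution check, and (2) uses the direct method of the calculus of variations for existence, together with a convexity argument sharpened by strict convexity for uniqueness. For the upper bound, I substitute the trial measures $R=Q$ and $R=P$ into the infimum defining $\mathfrak{D}^c_\varepsilon(P\|Q)$; since $T_c$ and $\mathfrak{D}$ are divergences (so $T_c(P,P)=0$ by A-T and $\mathfrak{D}(Q\|Q)=0$ by A-D), these choices yield values $T_c(P,Q)$ and $\varepsilon\mathfrak{D}(P\|Q)$, so the infimum is bounded above by their minimum.

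For the existence part of (2), I assume $\mathfrak{D}^c_\varepsilon(P\|Q)<\infty$ and pick a minimizing sequence $R_n\in\mathcal{P}(Y)$. Because $T_c(P,\cdot)\ge 0$ and the total objective is eventually bounded, the values $\mathfrak{D}(R_n\|Q)$ are uniformly bounded by some $\alpha$. Pre-compactness of the level set $\{R:\mathfrak{D}(R\|Q)\le\alpha\}$ in the weak topology (Assumption A-D) lets me extract a weakly convergent subsequence $R_{n_k}\rightharpoonup R^*$. Weak lower semicontinuity of $R\mapsto T_c(P,R)$ (A-T) and of $R\mapsto\mathfrak{D}(R\|Q)$ (A-D) then gives
\begin{equation*}
T_c(P,R^*)+\varepsilon\mathfrak{D}(R^*\|Q)\le\liminf_k\bigl[T_c(P,R_{n_k})+\varepsilon\mathfrak{D}(R_{n_k}\|Q)\bigr]=\mathfrak{D}^c_\varepsilon(P\|Q),
\end{equation*}
so $R^*$ attains the infimum.

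For uniqueness, I suppose $R_1^*\ne R_2^*$ are both minimizers and set $R_\lambda=(1-\lambda)R_1^*+\lambda R_2^*$. Joint convexity of both functionals yields $T_c(P,R_\lambda)+\varepsilon\mathfrak{D}(R_\lambda\|Q)\le\mathfrak{D}^c_\varepsilon(P\|Q)$, so every $R_\lambda$ is itself a minimizer; combined with the reverse inequality from minimality, this forces both $\lambda\mapsto T_c(P,R_\lambda)$ and $\lambda\mapsto\mathfrak{D}(R_\lambda\|Q)$ to be affine on $[0,1]$. I then invoke the strict convexity of the sub-level sets of $\mathfrak{D}(\cdot\|Q)$ to rule out affine behaviour along a segment of distinct measures, producing the desired contradiction.

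The main obstacle will be the uniqueness step. The stated hypothesis is strict convexity of level sets rather than strict convexity of the function $\mathfrak{D}(\cdot\|Q)$, so I must be careful to apply it to a pair of measures sharing a common level value. For the running example of the KL divergence, the function itself is strictly convex on measures absolutely continuous with respect to $Q$ and the affineness contradiction is immediate; in the general setup one argues by passing to midpoints along the segment until reaching a same-level pair and then invoking the level-set hypothesis.
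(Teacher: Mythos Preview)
Your proof is correct and follows essentially the same route as the paper: substitute $R=P$ and $R=Q$ for the upper bound, run the direct method (minimizing sequence, pre-compact level sets, weak lower semicontinuity) for existence, and use a midpoint/convexity contradiction for uniqueness.

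The only noticeable difference is in the uniqueness step. The paper simply takes $R_3^*=\tfrac12(R_1^*+R_2^*)$ and writes the strict inequality
\[
T_c(P,R_3^*)+\varepsilon\,\mathfrak{D}(R_3^*\|Q)<\tfrac12\bigl(\cdots\bigr)=\mathfrak{D}^c_\varepsilon(P\|Q),
\]
appealing to ``strict convexity of $\mathfrak{D}(\cdot\|Q)$'' as a function. You instead deduce that both $\lambda\mapsto T_c(P,R_\lambda)$ and $\lambda\mapsto\mathfrak{D}(R_\lambda\|Q)$ must be affine and then try to contradict the \emph{level-set} strict convexity hypothesis as literally stated in A-D. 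Your instinct is right that there is a mismatch between the assumption (strictly convex sub-level sets) and what the paper's one-line argument actually uses (strict convexity of the function), but your proposed fix---``passing to midpoints along the segment until reaching a same-level pair''---does not quite close the gap when $\mathfrak{D}(R_1^*\|Q)\neq\mathfrak{D}(R_2^*\|Q)$: if $\mathfrak{D}$ is affine and strictly monotone along the segment, no two distinct points on it share a level. In the paper's intended applications (KL and $f$-divergences) the function itself is strictly convex, so the midpoint argument goes through directly; it is cleanest to simply invoke that, as the paper does.
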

 The upper bound \eqref{eq:upper.bound} follows directly by simply setting $R=P$ and $R=Q$ in \eqref{eq:def:OTdivergence}. As an important consequence,
the proximal divergence is finite whenever either the divergence $\mathfrak{D}(P\|Q)$ or the optimal transport is finite. Notably, this holds without requiring absolute continuity, a crucial distinction from the classical KL divergence, where finiteness typically depends on this condition. For example, when considering the $p$-Wasserstein metrics with the KL-divergence, we observe $\mathfrak{D}^p_{\mathrm{KL},\varepsilon}(P\|Q)$ is finite whenever $P$ and $Q$ have $p$-th finite moments without any assumption on absolute continuity of $P$ with respect to $Q$. This property makes proximal OT divergences well-suited for comparing data to models or between datasets, including when data distributions are concentrated on low-dimensional structures, as suggested by the ``manifold hypothesis" in machine learning.

\rb{Furthermore, the proximal OT divergences yields a coarser topology that admits a larger set of target distributions beyond absolute continuity.} 
%Furthermore, given a target distribution $Q$ and leveraging \eqref{eq:upper.bound},  
Both the ball based on the KL divergence $B_{\mathrm{KL}}(Q)=\{\mu:\mathfrak{D}_{\mathrm{KL}}(\mu\|Q)\leq 1 \}$ and the ball based on the optimal transport cost $B_{T_c}(Q)=\{\mu:T_c(\mu\|Q)\leq 1 \}$ are subsets of the proximal OT divergence ball $B_{\mathrm{Prox OT}}(Q)=\{\mu:\mathfrak{D}^c_\varepsilon(\mu\|Q)\leq 1 \}$. While $B_{T_c}(Q)$ includes probability distributions that may not be absolutely continuous with respect to $Q$, $B_{\mathrm{KL}}$ excludes them. Meanwhile, $B_{T_c}(Q)$ may fail to capture heavy-tailed distributions (see Example 1 in \cite{chen2024learning}), while $B_{\mathrm{KL}}$ includes them. $B_{\mathrm{Prox OT}}(Q)$ includes both of these types of distributions. This highlights the advantage of proximal OT divergences in enhancing robustness, both in generative modeling tasks and in the construction of ambiguity sets for DRO.
   % The upper bounds imply the finiteness of proximal OT divergences even without absolute continuity, bypassing a key restriction in the classical KL divergence.
   %      Practically that means that proximal OT divergences are suitable to compare data to models and/or data, even if data and supports of measures are supported on different manifolds. 

% This property when combined with their  computability, makes them a suitable computational tools for related learning and inference tasks.
% %
% We note that proximal OT divergences are computable based  on their dual and dynamic formulations that provide two alternative algorithmic methods as we discuss in following Sections.

The existence of a unique proximal minimizer follows naturally from the compactness of the level sets of the map $R \mapsto \mathfrak{D}(R\|Q)$ in the weak topology.  The minimizer $R^*$ is crucial for proving many of the results in this paper, and also plays a significant role in applications and algorithms. For example, in generative modeling applications, where $P$ is a reference probability distribution which is easy to sample from and $Q$ is an unknown data distribution accessible only through samples, the probability distribution $R^*$  often serves as the approximate target distribution that is produced by the generative algorithm~\cite{onken2021ot,finlay2020train,gu2024combining}. This insight motivates the definition of a proximal operator that directly maps the reference measure $P$ to the minimizer $R^*$. This transport proximal operator is a generalization of the proximal operator in $\R^d$ \cite{parikh2014proximal}. When the transport metric is the Wasserstein distance, it is known as the Wasserstein proximal operator and has been studied in \cite{li2023kernel}.

\begin{definition}[\bf Transport proximal operator]\label{def:  Transport proximal operator}
Let $Q$ be a fixed probability distribution and let assumptions {\bf A-T} and {\bf A-D} be satisfied. We define the \emph{transport proximal operator} 
\begin{equation}
\textbf{prox}^c_{\varepsilon \mathfrak{D}(\cdot\|Q)}: \mathcal{P}(X) \to \mathcal{P}(Y)
\end{equation}
as the mapping that assigns each measure $P\in\mathcal{P}(X)$ to the minimizer
\begin{equation}\label{def:Transport proximal operator}
    R^* = \textbf{prox}^c_{\varepsilon \mathfrak{D}(\cdot\|Q)}(P) :=  \argmin_{R\in \mathcal{P}(Y)}\left\{T_c(P,R) + \varepsilon \mathfrak{D}(R\|Q)  \right\}.
\end{equation}
When $T_c$ is the $p$-Wasserstein distance, we denote this operator by $\textbf{prox}^p_{\varepsilon \mathfrak{D}(\cdot\|Q)}$ and refer to it as \emph{Wasserstein proximal operator}. When $Q$ is a fixed probability distribution, we abbreviate the notation for the operator to $\textbf{prox}^c_{\varepsilon \mathfrak{D}}$.
\end{definition}

\begin{remark}[Transport and re-weighting]\label{remark:Transport and re-weighting} The proximal minimizer \rb{elucidates how the divergence compares two distributions}: the measure $P$ is first (optimally) transported to the intermediate measure $R^*$, which is absolutely continuous with respect to $Q$, even if $P$ and $Q$ are not mutually absolutely continuous. Moreover, the support of $R^*$ can differ significantly from $P$. The cost incurred by this movement is the optimal transport cost and essentially depends on the distance over which the mass must be moved. Subsequently, $R^*$ is redistributed to $Q$ using the information divergence. In this stage, the support of $R^*$ remains contained within the support of $Q$; any attempt to expand or shift it beyond the support of $Q$ would result in an infinite information divergence. This structural property implies that proximal OT divergences can effectively capture data supported on manifolds~\cite{gu2024combining}. %In particular, %some evidence has shown that related proximal divergence frameworks have exhibited strong manifold learning capabilities, as demonstrated in~
Notably, related proximal divergence frameworks have shown strong manifold learning capabilities, as seen in~\cite{birrell2022f, birrell2023adversarially, pidstrigach2022score, zhang2024wasserstein}.  
%      Practically, this property implies that proximal OT divergences have the potential to learn manifolds, \cite{Gu_W1W2}.
% %\cite{loaizaganem2024deep_manifold}, 
%          %due to their intrinsic transport and redistribution properties. 
%          Some evidence of manifold learning is demonstrated in related proximal concepts in \cite{birrell2022f, birrell2023adversarially, pidstrigach2022score, zhang2024wasserstein}.
\end{remark}
\begin{remark}[Connection to the JKO scheme]\label{rem: Connection to JKO scheme} The Jordan-Kinderlehrer-Otto (JKO) scheme is a time-discretization approach for solving the gradient flow associated with an energy functional $\mathcal{E}(\rho)$ in the 2-Wasserstein space of probability measures $\mathcal{P}_2(X)$. It is closely linked to the transport proximal operator, providing a variational framework that iteratively minimizes a combination of the energy functional and a regularizer given by the 2-Wasserstein distance. The continuous-time gradient flow is defined as the \rb{solution of the PDE} 
    \begin{equation}\label{eq:gradient flow JKO}
    \frac{\partial \rho(x,t)}{\partial t}=-\nabla\cdot \left(\rho\nabla\frac{\delta\mathcal{E}(\rho)}{\delta\rho}\right), \qquad \rho(\cdot,0) = \rho_0(\cdot)
    \end{equation}
    where $\rho(x,t)$ is the probability density evolving over time \rb{starting from an initial probability density $\rho_0$}, and $\frac{\delta\mathcal{E}}{\delta\rho}$ is the variational derivative of the energy functional. The right hand side of \eqref{eq:gradient flow JKO} represents the  Wasserstein gradient flow. 
    Instead of directly solving the continuous-time gradient flow in~\eqref{eq:gradient flow JKO}, the JKO scheme discretizes time into small intervals and finds an approximate solution at each time-step. Specifically, the scheme with a discrete time step of size $\tau$ is formulated as an iterative minimization 
    \begin{equation}\label{eq:iterative minimization}
\rho^*(\cdot,t_{n+1})=\argmin_{\rho}\left\{\mathcal{E}(\rho)+\frac{1}{2\tau}W^2_2(\rho^*(\cdot,t_n),\rho)\right\}.
    \end{equation}
    \rb{The continuous-time limit of the discrete scheme as $\tau\to0$ recovers the solution of the gradient flow PDE~\eqref{eq:gradient flow JKO}; see~\cite{jordan1998variational} for the original work and~\cite{ambrosio2005gradient} for a rigorous overview}. Popular examples are the heat equation and the Fokker-Planck equation, which can be seen as the gradient flow of the entropy functional and the entropy plus a potential energy, respectively. In the latter example, by a simple calculation:  
    \[
    \mathcal{E}(\rho)=\int f(\rho(x)) \rho(x)dx+\int V(x) \rho(x)dx
    =\mathfrak{D}_{\textrm{KL}}(\rho\|q)=\int\rho(x)\log\frac{\rho(x)}{q(x)}dx,\]
    with $f(t)=t\log t$ and $q(x)\propto e^{-V(x)}$. 
    
    The connection between the JKO scheme and the proximal OT divergences follows by interpreting~\eqref{eq:iterative minimization} using operator~\eqref{def:Transport proximal operator}, i.e.,
    \[
    \rho^*(\cdot,t_{n+1})=\frac{1}{2\tau}\textbf{prox}^2_{2\tau \mathfrak{D_{\mathrm{KL}}}(\cdot\|q)}(\rho^*(\cdot,t_n))
    \] 
    starting from some initial probability density $\rho(\cdot,0) = \rho_0$.
\end{remark}

\subsection{Proximal OT divergences interpolate between optimal transport distances and information divergences}
\label{sec:interpolation}

The parameter $\varepsilon$ plays a crucial role in proximal OT divergences. It controls the trade-off between the information divergence term and the optimal transport cost, thereby determining the relative importance of reweighting versus transport. More precisely, $\varepsilon$ scales the information divergence term, affecting how much it contributes to the total cost. For example, in the divergence-dominant regime, i.e., when $\varepsilon$ is large, the information divergence term becomes more significant and even a small value of divergence $\mathfrak{D}(R\|Q)$ contributes significantly to the overall objective as it is magnified by $\varepsilon$. %, and in fact, as much as the transport cost.
This effect is formalized in the following theorem, demonstrating that the proximal OT divergence is not only a lower bound for both the divergence and the optimal transport (see \eqref{eq:upper.bound}), but also serves as a continuous regularization of both terms.

\begin{theorem}[\bf Interpolation properties]\label{thm:interpolation}
Under assumptions {\bf A-T} and {\bf A-D}, the following holds:
\begin{enumerate}
 \item{\bf Divergence regularization by OT}.  As $\varepsilon \searrow 0$, $\frac{1}{\varepsilon} \mathfrak{D}^{c}_{\varepsilon}$ is increasing and 
\begin{eqnarray}
    \lim_{\varepsilon \to 0} \frac{1}{\varepsilon} \mathfrak{D}^{c}_{\varepsilon}(P\|Q)  &=& \mathfrak{D}(P\|Q)
\end{eqnarray}
 
\item{{\bf OT regularization by divergence}}. As $\varepsilon \nearrow \infty$, $\mathfrak{D}^{c}_{\varepsilon}$ is increasing and 
\begin{eqnarray}
    \lim_{\varepsilon \to \infty} \mathfrak{D}^{c}_{\varepsilon}(P\|Q) &=&  T_c(P,Q)
    \end{eqnarray}
\end{enumerate}
\end{theorem}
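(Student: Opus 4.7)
My approach is to handle both parts in parallel, since each reduces to (i) a one-line monotonicity check, (ii) a trivial upper bound via the infimum definition, and (iii) a matching lower bound obtained by extracting a convergent subsequence of (near-)minimizers and applying lower semicontinuity. The assumptions \textbf{A-T} and \textbf{A-D} (in particular, weak lower semicontinuity of $T_c$ and $\mathfrak{D}(\cdot\|Q)$, pre-compactness of the level sets of $\mathfrak{D}(\cdot\|Q)$, and the fact that both are divergences) supply exactly what is needed.

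\textbf{Monotonicity and easy inequalities.} For part~(1), for each fixed $R$ the quantity $\frac{1}{\varepsilon}T_c(P,R)+\mathfrak{D}(R\|Q)$ is nonincreasing in $\varepsilon$, so its infimum $\frac{1}{\varepsilon}\mathfrak{D}^c_\varepsilon(P\|Q)$ is nonincreasing in $\varepsilon$, i.e.\ increasing as $\varepsilon\searrow 0$. Plugging $R=P$ gives $\frac{1}{\varepsilon}\mathfrak{D}^c_\varepsilon(P\|Q)\le \mathfrak{D}(P\|Q)$, so the monotone limit $L_0:=\lim_{\varepsilon\to 0}\frac{1}{\varepsilon}\mathfrak{D}^c_\varepsilon(P\|Q)\le \mathfrak{D}(P\|Q)$ exists in $[0,\infty]$. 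For part~(2), the integrand is nondecreasing in $\varepsilon$, hence $\mathfrak{D}^c_\varepsilon(P\|Q)$ is nondecreasing in $\varepsilon$; plugging $R=Q$ gives $\mathfrak{D}^c_\varepsilon(P\|Q)\le T_c(P,Q)$, so $L_\infty:=\lim_{\varepsilon\to\infty}\mathfrak{D}^c_\varepsilon(P\|Q)\le T_c(P,Q)$ exists.

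\textbf{Matching lower bounds via compactness.} For part~(1), assume first $L_0<\infty$. Using $\eta$-minimizers $R^\eta_\varepsilon$ (or the true minimizers $R^*_\varepsilon$ from Theorem~\ref{thm:proximal} once finiteness is established), the bound $\frac{1}{\varepsilon}T_c(P,R^\eta_\varepsilon)+\mathfrak{D}(R^\eta_\varepsilon\|Q)\le L_0+\eta$ forces both $\mathfrak{D}(R^\eta_\varepsilon\|Q)\le L_0+\eta$ and $T_c(P,R^\eta_\varepsilon)\le \varepsilon(L_0+\eta)\to 0$. The first bound places $\{R^\eta_\varepsilon\}$ in a weakly pre-compact level set by \textbf{A-D}; pass to a convergent subsequence $R^\eta_{\varepsilon_n}\rightharpoonup R$. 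Weak lsc of $T_c(P,\cdot)$ gives $T_c(P,R)\le \liminf_n T_c(P,R^\eta_{\varepsilon_n})=0$, so $R=P$ because $T_c$ is a divergence. Then weak lsc of $\mathfrak{D}(\cdot\|Q)$ yields $\mathfrak{D}(P\|Q)\le \liminf_n \mathfrak{D}(R^\eta_{\varepsilon_n}\|Q)\le L_0+\eta$; letting $\eta\to 0$ gives $\mathfrak{D}(P\|Q)\le L_0$. The case $L_0=\infty$ combined with finite $\mathfrak{D}(P\|Q)$ is ruled out by the upper bound, and the case $\mathfrak{D}(P\|Q)=\infty$ is handled by the same subsequential compactness argument: if $L_0<\infty$ we would derive $\mathfrak{D}(P\|Q)\le L_0<\infty$, a contradiction, so $L_0=\infty$. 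Part~(2) is symmetric: if $L_\infty<\infty$, then $\mathfrak{D}(R^*_\varepsilon\|Q)\le L_\infty/\varepsilon\to 0$, so $\{R^*_\varepsilon\}$ is pre-compact; a weak limit $R$ satisfies $\mathfrak{D}(R\|Q)=0$, hence $R=Q$, and weak lsc of $T_c(P,\cdot)$ gives $T_c(P,Q)\le \liminf_n T_c(P,R^*_{\varepsilon_n})\le L_\infty$. If $T_c(P,Q)=\infty$ the same dichotomy forces $L_\infty=\infty$.

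\textbf{Main obstacle.} Everything technical is concentrated in the lower-bound step: one must simultaneously exploit (a) that the transport term alone goes to $0$ (respectively stays bounded), (b) that the divergence term alone stays bounded in a pre-compact level set (respectively goes to $0$), and (c) that both $T_c$ and $\mathfrak{D}(\cdot\|Q)$ are weakly lsc divergences, so that a weak subsequential limit of (near-)minimizers is forced to be $P$ in part~(1) and $Q$ in part~(2). The care lies in treating the infinite-value case uniformly with the finite-value case, which is why the argument is phrased as a contradiction against the assumed finiteness of $L_0$ or $L_\infty$; no strict convexity is needed here, so working with $\eta$-minimizers is sufficient and avoids invoking Theorem~\ref{thm:proximal} in borderline cases.
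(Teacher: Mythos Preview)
Your proof is correct and follows essentially the same route as the paper's: the upper bound comes from plugging $R=P$ (resp.\ $R=Q$) into the infimal convolution, and the matching lower bound is obtained by using the level-set pre-compactness of $\mathfrak{D}(\cdot\|Q)$ to extract a weakly convergent subsequence of (near-)minimizers, identifying the limit as $P$ (resp.\ $Q$) via the divergence property of $T_c$ (resp.\ $\mathfrak{D}$), and then applying lower semicontinuity. Your use of $\eta$-minimizers rather than the exact minimizers of Theorem~\ref{thm:proximal} is a harmless and slightly cleaner variant, but otherwise the argument matches the paper's.
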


The proof of this theorem is given in Appendix~\ref{sec:proofs.interpolation} and is essentially the same as the one given in \cite{dupuis2022formulation,birrell2022f} for the family of $(f,\Gamma)$-divergences introduced therein.

% \MK{Why these results are useful (briefly): likelihood ratio estimation, theoretical importance as interpolation divergences}

\subsection{Dynamic formulation of proximal OT divergences and mean-field games}\label{subsec:DF.pOTd}

% Here, we demonstrate that the proximal OT divergences have a continuous-time, flow-based formulation, leveraging the Benamou-Bremier formulation of optimal transport \cite{Benamou2000}. 
Here, we demonstrate that the proximal OT divergences have a continuous-time, flow-based formulation that augments the classical Benamou–Brenier formulation of optimal transport \cite{Benamou2000} with a mean field game (MFG) structure. Specifically, let $X=Y=\mathbb{R}^d$ and consider the transport cost  $c(x,y)=|x-y|^p$ for $p>1$. %\todo{BZ: This used to have $c(x,y) = |x - y|_p^p/p$, where the $p$ division is for notational convenience. Since the BB formula actually doesnt have the 1/p factor and because we're not writing the general MFG optimality conditions, we don't have to worry about the convenience. }
According to the Benamou-Brenier formula, see e.g.\thinspace\cite{Santambrogio}, optimal transport can be interpreted as the problem of finding an optimal vector field $v_t : \mathbb{R}^d \rightarrow \mathbb{R}^d$ \rb{for all $t \in [0,1]$} 
that continuously transports the probability measure $P$ into the measure $R$, i.e., 
\begin{align}
    W_p^p(P,R) = \min_{\rho, v} \left\{ \int_0^1 \mathbb{E}_{\rho_t}\left[|v_t|^p\right] d t : \begin{array}{c}\partial_t\rho_t + \nabla \cdot(\rho_t v_t) = 0 \\ \rho_0 = P,\, \rho_1 = R  \end{array}
    \right\}.
\end{align}
This leads immediately to the following theorem that provides an equivalent formulation of the divergence based on dynamic optimal transport. 
\begin{theorem}[\bf Dynamic formulation of proximal OT divergences]\label{thm:dynamic:pot}
    For $p>1$, the proximal OT divergence  $\mathfrak{D}^p_\varepsilon(P\|Q)$
    has the following representation:
\begin{align}
    \mathfrak{D}^p_\epsilon(P\|Q) &=  \inf_{R\in \mathcal{P}(\R^d)}\left\{ \varepsilon \mathfrak{D}(R\|Q) + W_p^p(P,R) \right\} \nonumber \\
    & = \min_{v,\rho} \left\{ \varepsilon \mathfrak{D} (\rho_1\|Q) + \int_0^1 \mathbb{E}_{\rho_t}\left[|v_t|^p\right] dt : 
    \begin{array}{c}
    \partial_t \rho_t + \nabla \cdot(\rho_t v_t) = 0 \\  
    \rho_0 = P
     \end{array}\right\} \label{eq:dynamic:pot}
\end{align}
\end{theorem}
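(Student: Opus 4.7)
The plan is to derive the identity by substituting the Benamou–Brenier representation of $W_p^p$ into the defining infimal convolution of $\mathfrak{D}^p_\varepsilon(P\|Q)$ and then interchanging the order of the resulting nested infima. Starting from Definition 2.2, we have
\begin{equation*}
\mathfrak{D}^p_\varepsilon(P\|Q) = \inf_{R\in\mathcal{P}(\mathbb{R}^d)}\Bigl\{\varepsilon\, \mathfrak{D}(R\|Q) + W_p^p(P,R)\Bigr\},
\end{equation*}
and replacing $W_p^p(P,R)$ by the Benamou–Brenier infimum over vector fields $v$ and curves $\rho$ satisfying the continuity equation with $\rho_0 = P$, $\rho_1 = R$ gives a nested infimum over $R$ and over $(v,\rho)$.

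The heart of the argument is then the interchange of infima. For each admissible pair $(v,\rho)$ with $\rho_0 = P$, the terminal slice $\rho_1$ is a well-defined probability measure on $\mathbb{R}^d$, so the set of admissible triples $(R,v,\rho)$ with $\rho_0=P$, $\rho_1=R$ is exactly parametrized by admissible pairs $(v,\rho)$ with $\rho_0=P$ by setting $R=\rho_1$. Since the objective depends on $R$ only through the terms $\varepsilon\mathfrak{D}(R\|Q)$ and the inner Benamou–Brenier functional, and since the constraint $\rho_1=R$ couples them trivially, the double infimum collapses to
\begin{equation*}
\inf_{(v,\rho):\rho_0=P}\Bigl\{\varepsilon\,\mathfrak{D}(\rho_1\|Q) + \int_0^1 E_{\rho_t}\bigl[\tfrac{1}{p}|v_t|^p\bigr]\,dt\Bigr\},
\end{equation*}
which is the right-hand side of \eqref{eq:dynamic:pot}.

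To upgrade the infimum to a minimum, I would combine Theorem 2.3 with classical Benamou–Brenier existence. If $\mathfrak{D}^p_\varepsilon(P\|Q)=\infty$ there is nothing to prove; otherwise Theorem 2.3 gives a unique proximal minimizer $R^*\in\mathcal{P}(\mathbb{R}^d)$ with $\mathfrak{D}^p_\varepsilon(P\|Q)=W_p^p(P,R^*)+\varepsilon\,\mathfrak{D}(R^*\|Q)$. Finiteness of $\mathfrak{D}^p_\varepsilon(P\|Q)$ forces $W_p^p(P,R^*)<\infty$, so $P,R^*\in\mathcal{P}_p(\mathbb{R}^d)$ and the standard Benamou–Brenier theorem (see e.g.\ Santambrogio) yields an optimal pair $(v^*,\rho^*)$ with $\rho^*_0=P$, $\rho^*_1=R^*$ achieving $W_p^p(P,R^*)$. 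This pair is then optimal in the dynamic problem.

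The main obstacle is not conceptual but technical: justifying the infimum swap requires that every admissible $(v,\rho)$ with $\rho_0=P$ in the dynamic problem arises as the Benamou–Brenier candidate for some valid endpoint $R=\rho_1$, and conversely. This relies on the fact that weak solutions of the continuity equation with finite action are narrowly continuous curves in $\mathcal{P}_p(\mathbb{R}^d)$, so that $\rho_1$ is a bona fide probability measure and $\mathfrak{D}(\rho_1\|Q)$ makes sense. Beyond this bookkeeping, the result is essentially a direct corollary of Definition 2.2 and the Benamou–Brenier formula.
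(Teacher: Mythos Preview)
Your proposal is correct and takes essentially the same approach as the paper: substitute the Benamou--Brenier representation of $W_p^p(P,R)$ into the infimal convolution defining $\mathfrak{D}^p_\varepsilon(P\|Q)$ and collapse the nested infima. The paper in fact treats this theorem as an immediate consequence of Benamou--Brenier and gives no further argument, so your discussion of the infimum swap, the upgrade to a minimum via Theorem~2.3, and the narrow continuity of finite-action curves is more detailed than what the paper itself provides.
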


The choice of the terminal time $T$, chosen to be $T = 1$ in Theorem \ref{thm:dynamic:pot}, is entirely arbitrary. A more natural choice is $T=\varepsilon$, and by rescaling time for the flow accordingly, we obtain the divergence
\begin{eqnarray}\label{dynamic.formulation}
\mathfrak{D}^p_T(P\|Q) &=&  T \inf_{R\in \mathcal{P}(\R^d)}\left\{  \mathfrak{D}(R\|Q) + \frac{1}{T} W_p^p(P,R) \right\} \nonumber \\
    & =& T \min_{v,\rho} \left\{  \mathfrak{D} (\rho_T\|Q) + \rb{\frac{1}{T^2} }\int_0^T \mathbb{E}_{\rho_t}\left[|v_t|^p\right] dt : 
    \begin{array}{c}
    \partial_t \rho_t + \nabla \cdot(\rho_t v_t) = 0 \\  
    \rho_0 = P
    \end{array}
    \right\},
    \label{eq:dynamicpOTrescaled}
\end{eqnarray}
\rb{where the extra factor $1/T$ arises from the time-substitution $t \rightarrow Tt$.}
%\textcolor{olive}{BZ: I think the scaling with time is off here. If we want the integral to go from $ t = 0$ to $t = T$, then the integral needs to be divided by $1/T$. Introducing $\varepsilon = T$ would then make the factor in front the integral be $1/T^2$.  I made changes in olive so the scaling is correct, but please check again. The $T^2$ is a little odd, but it needs to be there to preserve the point made in Remark 2.10.  Feel free to make changes here..., not sure if $T = \varepsilon$ is that natural of a choice, but I think it's the right point since it dampens down the transport term.}
\eqref{eq:dynamicpOTrescaled} results in a more convenient form for time-discretization and learning algorithms. We can interpret the parameter $T$ as the time-horizon  allotted for the Wasserstein component of the proximal OT divergence to carry out the transport.

\begin{remark}[Proximal in generative flows]
% The dynamic formulation of proximal OT divergence is particularly useful in generative modeling algorithms which learn the optimal vector field $v_t^*(x)$ and thus the flow driving the source $P$ (often a Gaussian measure) toward the proximal $\rho_T=R=\textbf{prox}^c_{\varepsilon \mathfrak{D}}(P)$ which serves as a model for the data $Q$.  
% The interpolation properties in Theorem \ref{thm:interpolation} suggest that  $\rho_T$ is a good approximation of $Q$, 
% when $\varepsilon$ is chosen sufficiently large. Indeed for both terms in the 
% infimum convolution to be of the same order  we need $\mathfrak{D}(R\|Q)$ 
% to be small if $\varepsilon$ is large, implying that $R$ should be close to $Q$.
 The dynamic formulation of proximal OT divergence is particularly useful in generative modeling algorithms which learn the optimal vector field $v_t^*$ in~\eqref{eq:dynamicpOTrescaled} and thus the flow driving the source $P$ toward the proximal \rb{distribution} $\rho_T^*=R^*=\textbf{prox}^c_{\varepsilon \mathfrak{D}}(P)$. The interpolation properties established in Theorem \ref{thm:interpolation} indicate that $\rho_T^*$ is a good approximation of $Q$ 
when $T>1$ in \eqref{eq:dynamicpOTrescaled}. Specifically, Theorem \ref{thm:interpolation} states that $\lim_{T \to \infty}\mathfrak{D}^{c}_{T}(P\|Q) = W_p^p(P,Q)$. Thus, by the approximation  
\begin{equation}
\mathfrak{D}^{c}_{T}(P\|Q)= T\mathfrak{D}(R^*\|Q) +  W_p^p(P,R^*)\approx W_p^p(P,Q),\qquad T>1.
\end{equation}
Hence, $R^*$ is a good approximation of $Q$, and $\rho^*_T$ serves as a good surrogate of $Q$. 
\end{remark}

\begin{remark}[The mean field-game for proximal OT divergences]
 The dynamical formulation of the proximal OT divergence closely resembles the Benamou-Brenier formulation of the Wasserstein transport cost \cite{Benamou2000}. However, the divergence term and the optimization over the intermediate measure $\rho_T=R$ in \eqref{eq:dynamic:pot} serves as a relaxation of the terminal condition in the Benamou-Brenier formulation. Mathematically, this turns the optimization problem~\eqref{eq:dynamic:pot} into a mean-field game (MFG)~\cite{lasry2007mean}, with $\mathfrak{D} (\rho_T\|Q)$ and $|v|^p$ playing the role of the terminal cost and action cost in the MFG formulation, respectively. The optimality conditions are a coupled system of non-linear PDEs: a backward Hamilton-Jacobi-Bellman (HJB) equation that governs how an individual agent optimally controls its movement while considering the evolving population distribution $\rho(\cdot,t)$, with a forward continuity equation that describes the dynamics of the density of the agents under a particular velocity field~\cite{zhang2023mean}. For example, if the Wasserstein-$2$ distance is chosen to be the optimal transport cost \bjz{and $\varepsilon = 2/T$}, the mean-field game yields the following system of coupled PDEs
\begin{align}
    \begin{dcases}
        - \frac{\partial U}{\partial t} + \frac{1}{2}|\nabla U|^2 = 0 \\
        \frac{\partial \rho}{\partial t} - \nabla \cdot(\rho\nabla U) = 0 \\
        U(x,T) = 1 + \log \frac{\rho(x,T)}{Q(x)}, \, \rho(x,0) = P(x).
    \end{dcases}
\end{align}
Moreover, the optimal velocity is a time-dependent gradient field $v^*(x,t) = -\nabla U(x,t)$. 
%i.e.,   
%  \begin{eqnarray} 
%     -\partial_t U_t  +  \frac{1}{q}  |\nabla U_t|^q = 0\,,    &&   U_1(x) = \frac{\delta }{\delta R}\mathfrak{D}(\rho_1\|Q) \\ \label{eqs:coupled PDEs}
%       \partial_t \rho_t - \nabla \cdot\left( \rho_t |\nabla U_t|^{q-2}\nabla U_t\right) = 0 \,, && \rho_0=P
%  \end{eqnarray}
%  % \begin{align}
%  %        \begin{dcases}
%  %            -\partial_t U_t  +  \frac{1}{q} \right) |\nabla U_t|^q = 0 \\
%  %            \partial_t \rho_t - \nabla \cdot\left( \rho_t |\nabla U_t|^{q-2}\nabla U_t\right) = 0 \\
%  %            U_1(x) = \frac{\delta }{\delta R}D(\rho_1\|Q),\,\,\,\, \rho_0=P. 
%  %        \end{dcases}
%  %    \end{align}
% and the optimal velocity field is
% \begin{equation}\label{eq:optimalvelocity}
%         v_1^* = -|\nabla U_1|^{q-2} \nabla U_1.
%     \end{equation}
% where $q$ is the dual exponent (i.e. $\frac{1}{p}+\frac{1}{q}=1$). This follows from MFG theory where the Lagrangian of the optimization problem is $L(v) = \frac{1}{p}|v|^p$ and its Legendre transform is 
% $H(z)= \sup_{v}\{-zv-L(v)\}=\frac{1}{q}|z|^q$. The maximizer in the Legendre-Fenchel transform is given by $v = - |z|^{\frac{2-p}{p-1}} z$. Substituting $z = \nabla U$  into this expression, we obtain  the optimal velocity, leading to \eqref{eq:optimalvelocity}.
\end{remark}

\section{Variational representations for proximal OT divergences}
\label{sec:duality}
% Like information divergences and optimal transport distances, proximal OT divergences have dual formulations, which provides a variational representation that searches over suitable classes of test functions, rather than intermediate measures. Variational representations of distances, divergences, and IPMs, play important theoretical and practical roles in machine learning. 
In this section, we show that the dual formulation of the proximal OT divergences provides a variational representation of the proximal OT divergences based on a suitable classes of test functions. Variational representations of divergences, integral probability metrics, and optimal transport costs play an important theoretical and practical role. For example, these representations are an essential ingredient in generative adversarial networks
(GANs) which are constructed by solving a minmax game that minimize over parameters of the generator and maximize over the function space in the variational representation of the divergence to identify the discriminator %in the variational representation of the divergence 
\cite{goodfellow2014generative}. 
Here in particular, the dual formulation  allows us to compute proximal OT divergences using convex neural networks as discriminators; see Section \ref{sec:algorithms.CNN} for a complete discussion. \rb{In Section~\ref{sec:dual_representations} we review variational representations of information divergences and optimal transport distances. We then derive variational representations for the proximal OT divergence in Section~\ref{sec:dualityproxOT}, which we use to derive a data processing inequality in Section~\ref{sec:DPI}.}
%Lastly, the existence and the practical evaluation of  variational derivatives of proximal OT divergences involves the discriminator and is always well-defined for any probability measure perturbations for which the proximal OT divergence remains finite. We note that such perturbations can be singular and do not necessarily need to be absolutely continuous with respect to the probability measure where the variational derivative is calculated.

%\MK{Merge A. and B.}

\subsection{Duality provides alternative representations of optimal transport distances and information divergences} \label{sec:dual_representations}

In this section, we choose the KL-divergence as an information divergence to present our results. One could also consider more general $f$-divergences.  We begin by revisiting some well-known facts about optimal transport and the KL divergence.

\textbf{Duality for optimal transport.}
We summarize some well-known results in optimal transport; see \cite{Villani2003topics, Villani2009oldandnew, Santambrogio} for proofs. Given a cost function $c: X \times Y \to \mathbb{R}\cup \{+\infty\}$, we consider the function space
\begin{equation}
\Phi_c = \left\{ (\phi, \psi) \in C_b(X\times Y), \phi \oplus \psi \le c \right\}
\end{equation}
(recall $\phi \oplus \psi$ denotes the function $\phi(x)+\psi(y)$). A fundamental result in optimal transport is the following duality result: suppose $c$ is bounded below and lower semicontinuous, then we have 
\begin{equation}\label{eq:transport:duality}
\begin{aligned}
T_{c}(P,Q) & =\inf_{\pi \in \Pi(P,Q)} \mathbb{E}_{\pi}[c] = \sup_{ (\phi, \psi) \in \Phi_c} \left\{  \mathbb{E}_P[\phi]+\mathbb{E}_{Q}[\psi]\right\}.
\end{aligned}
\end{equation}
Regarding the existence of optimizers,  an optimal coupling $\pi^*$ always exists given that $X$ and $Y$ are Polish.  If $c$ is uniformly continuous and bounded, 
then there exist optimizers $(\phi^*,\psi^*) \in \Phi_c$.  For general cost functions $c$, we cannot expect the optimizers to be bounded but there exists optimizers $\phi \in L^1(P)$, $\psi \in L^1(Q)$
satisfying $\phi\oplus \psi \le c$ provided we assume that 
\begin{equation}
c(x,y) \le a(x) + b(y) \quad \text{ with } a \in L^1(P), b \in L^1(Q). 
\end{equation}
For example if $c(x,y)=d(x,y)^p$,
this condition holds for all probability measures in the $p$-Wasserstein space defined in~\eqref{def:wass.p}.

\textbf{Duality for the KL-divergence.}
The KL-divergence between two probability measures $R\in \mathcal{P}(Y)$ and $Q\in \mathcal{P}(Y)$ is given by 
\begin{equation}\label{eq:KLdivergence}
\mathfrak{D}_{\textrm{KL}}(R\|Q) = 
\left\{
\begin{array}{cl}
\mathbb{E}_R\left[ \log \frac{dR}{dQ}\right] & \textrm{ if } R \ll Q \\
+\infty \textrm{ otherwise}
\end{array}
\right. \,.
\end{equation}
The Donsker-Varadhan formula provides a variational representation of the KL divergence (see \cite{dupell4} for a proof), which is based on the convex duality between the divergence and the cumulant generating function $\Lambda(\psi)=\log \mathbb{E}_Q[e^{\psi}]$. That is, 
\begin{equation}\label{eq:donsker.varadhan}
\sup_{\psi \in C_b(Y)} 
\left\{ 
\int_Y \psi \,dR- \log \mathbb{E}_Q[e^{\psi}]
\right\} =  
\left\{ 
\begin{array}{cl} 
\mathfrak{D}_{\textrm{KL}}(R\|Q),  & R \in \mathcal{P}(Y) \\
+\infty, &  R \in \mathcal{M}(Y) \setminus \mathcal{P}(Y).
\end{array}
\right. 
\end{equation}
The reader should note that this variational representation implies that we define $\mathfrak{D}_{\textrm{KL}}(R\|Q)$ as $+\infty$ if $R$ is not a probability measure. 
While one could extend  $\mathfrak{D}_{\textrm{KL}}(R\|Q)$ 
to $\mathcal{M}(Y)$ using the definition in the first line \eqref{eq:KLdivergence} to obtain a finite value,  this leads to another convex duality representation (see e.g. \cite{birrell2022f}). However, the representation in \eqref{eq:donsker.varadhan} is the one that will be essential for our purposes. 

We will also use the (convex) dual formula to \eqref{eq:donsker.varadhan} referred to as the Gibbs variational principle:  if $\psi \in C_b(Y)$
then 
\begin{equation}\label{eq:gibbs}
\log \mathbb{E}_Q[e^{\psi}] = \sup_{R \in \mathcal{P}(Y)} 
\left\{ \mathbb{E}_R[\psi] - \mathfrak{D}_{\textrm{KL}}(R\|Q)\right\} ,
\end{equation}
and the supremum is attained when $R$ is such that 
\begin{equation}
\frac{dR}{dQ} = \frac{e^{\psi}}{\mathbb{E}_Q[e^{\psi}]}\,.
\end{equation}

\subsection{Duality for proximal OT divergences via variational representations} \label{sec:dualityproxOT}
In this section, we establish a dual formulation for the proximal OT divergence when KL divergence is combined with an optimal transport cost. This is achieved by leveraging the variational representations given in \eqref{eq:gibbs} and \eqref{eq:transport:duality} respectively. In particular, the infimum convolution formula yields the following duality result. 

% The variational representation
% for optimal transport and for KL divergence implies via the infimum convolution formula the following duality for the proximal OT divergence.

\begin{theorem}{\bf(Duality for proximal  OT-divergence)} \label{thm:duality.Polish} 
Suppose $X$ and $Y$ are Polish spaces and that $c$ is lower-semicontinuous and bounded below.  Given  $P\in \mathcal{P}(X)$ and $Q\in \mathcal{P}(Y)$ suppose that $c(x,y)\le a(x)+b(y)$ with $\mathbb{E}_P[a] < \infty$ and $\mathbb{E}_Q[b] < \infty$. Then we have the following variational representations
\begin{equation}\label{eq:OTdivergence:dual}
\begin{aligned}
\mathfrak{D}_{\mathrm{KL},\varepsilon}^{c}(P\|Q)
&= \inf_{R \in \mathcal{P}(Y)} T_c(P,R)+ \varepsilon \mathfrak{D}_{\mathrm{KL}}(R\|Q)
\\
&= \sup_{(\phi, \psi) \in \Phi_c} \left\{ \mathbb{E}_P[\phi] - \varepsilon \log \mathbb{E}_Q[ e^{-\psi/\varepsilon}] \right\}{}
\end{aligned}
\end{equation}
\end{theorem}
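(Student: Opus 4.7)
The plan is to derive the dual identity by combining the two variational tools already at hand, namely the Kantorovich duality \eqref{eq:transport:duality} for $T_c$ and the Donsker--Varadhan/Gibbs formula \eqref{eq:donsker.varadhan}--\eqref{eq:gibbs} for $\mathfrak{D}_{\mathrm{KL}}$, and then exchanging an infimum and a supremum. Concretely, I introduce the bilinear functional
\begin{equation*}
F(R,\phi,\psi)\;=\;\mathbb{E}_P[\phi]+\mathbb{E}_R[\psi]+\varepsilon\,\mathfrak{D}_{\mathrm{KL}}(R\|Q),\qquad R\in\mathcal{P}(Y),\ (\phi,\psi)\in\Phi_c,
\end{equation*}
so that by \eqref{eq:transport:duality} one has $\sup_{\Phi_c} F(R,\phi,\psi)=T_c(P,R)+\varepsilon\,\mathfrak{D}_{\mathrm{KL}}(R\|Q)$, while by \eqref{eq:gibbs} applied to $-\psi/\varepsilon$ one has $\inf_{R}F(R,\phi,\psi)=\mathbb{E}_P[\phi]-\varepsilon\log\mathbb{E}_Q[e^{-\psi/\varepsilon}]$. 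The theorem is therefore equivalent to the minimax identity $\inf_R\sup_{\Phi_c}F=\sup_{\Phi_c}\inf_R F$.

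The easy inequality $\inf_R\sup_{\Phi_c}F\ge\sup_{\Phi_c}\inf_R F$ is purely algebraic and does not require any regularity on $c$: for every admissible pair $(R,\phi,\psi)$, $\phi\oplus\psi\le c$ gives $\mathbb{E}_P[\phi]+\mathbb{E}_R[\psi]\le T_c(P,R)$, and the Gibbs bound $-\varepsilon\log\mathbb{E}_Q[e^{-\psi/\varepsilon}]\le\mathbb{E}_R[\psi]+\varepsilon\mathfrak{D}_{\mathrm{KL}}(R\|Q)$ (for any $\psi\in C_b(Y)$ and $R\in\mathcal{P}(Y)$) can be added and then one takes $\inf_R$ followed by $\sup_{\Phi_c}$. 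The moment hypothesis $c\le a\oplus b$ with $\mathbb{E}_P[a],\mathbb{E}_Q[b]<\infty$ just ensures all integrals make sense and that the sup is not $-\infty$.

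For the reverse direction I plan to apply Sion's minimax theorem on the product $\mathcal{P}(Y)\times\Phi_c$. The map $R\mapsto F(R,\phi,\psi)$ is convex and weakly lower semicontinuous (sum of a weakly continuous linear term and the lsc convex functional $\mathfrak{D}_{\mathrm{KL}}(\cdot\|Q)$), while $(\phi,\psi)\mapsto F(R,\phi,\psi)$ is concave (actually affine). The compactness needed for Sion is supplied by Assumption \textbf{A-D}: the sublevel set $\{R:\mathfrak{D}_{\mathrm{KL}}(R\|Q)\le\alpha\}$ is weakly compact, and after choosing $\alpha$ larger than the trivial bound $\mathfrak{D}_{\mathrm{KL},\varepsilon}^{c}(P\|Q)/\varepsilon$ (see \eqref{eq:upper.bound}) the infimum over $R$ can be restricted to this compact set without changing its value. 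Sion then yields the swap, and the result follows from the two identifications recorded above.

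The main obstacle is the minimax swap itself, and in particular the fact that $\Phi_c$ is a set of bounded continuous functions while the natural optimizers of the Kantorovich problem typically live only in $L^1(P)\times L^1(Q)$ when $c$ is unbounded. I will handle this by truncation: replace $c$ by $c\wedge N$, apply the argument above where all suprema are achieved on $\Phi_{c\wedge N}\subset C_b$, and then pass to the limit $N\to\infty$ using monotone convergence together with the moment domination $c\le a\oplus b$ and the weak lower semicontinuity already proved in Theorem \ref{thm:divergence}. Once this truncation step is in place, the equality of primal and dual propagates to the limit, completing the proof.
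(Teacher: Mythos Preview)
Your overall architecture is reasonable and closely parallels the paper's: an easy inequality from Kantorovich and Donsker--Varadhan, a hard inequality requiring some form of duality/minimax, and then a truncation $c_n=c\wedge n\nearrow c$ to pass from bounded continuous costs to lower semicontinuous ones. The truncation step you sketch is essentially the paper's Theorem~\ref{thm:duality.general}. The difference is in how you handle the hard inequality for bounded $c$: the paper uses a Fenchel--Rockafellar argument (computing $F^{**}(0)$ for a perturbation function on $C(X\times Y)$, Lemma~\ref{lem:Fmap.lsc.convex} and Proposition~\ref{prop:LTcomputation}) first on compact spaces and then lifts to Polish spaces by inner approximation, whereas you try to apply Sion directly on Polish spaces using the compactness of KL level sets.

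The gap is precisely in that Sion step. Restricting the $R$-infimum to the compact set $K_\alpha=\{R:\mathfrak{D}_{\mathrm{KL}}(R\|Q)\le\alpha\}$ indeed leaves the \emph{outer} problem $\inf_R\sup_{\Phi_c}F$ unchanged (the primal minimizer lies in $K_\alpha$), and Sion gives
\[
\text{Primal}=\inf_{K_\alpha}\sup_{\Phi_c}F=\sup_{\Phi_c}\inf_{K_\alpha}F.
\]
But your ``identification 2'' is $\inf_{R\in\mathcal{P}(Y)}F(R,\phi,\psi)=\mathbb{E}_P[\phi]-\varepsilon\log\mathbb{E}_Q[e^{-\psi/\varepsilon}]$, an infimum over \emph{all} of $\mathcal{P}(Y)$. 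Since $K_\alpha\subset\mathcal{P}(Y)$, one only has $\inf_{K_\alpha}F\ge\inf_{\mathcal{P}(Y)}F$ pointwise in $(\phi,\psi)$, hence $\sup_{\Phi_c}\inf_{K_\alpha}F\ge\sup_{\Phi_c}\inf_{\mathcal{P}(Y)}F=\text{Dual}$. So after Sion you have recovered only $\text{Primal}\ge\text{Dual}$, which is the easy direction you already knew. For a generic $(\phi,\psi)\in\Phi_c$ the Gibbs measure $dR_\psi/dQ\propto e^{-\psi/\varepsilon}$ need not lie in $K_\alpha$ (take $\psi\in C_b(Y)$ with large oscillation), so the constrained inner infimum genuinely differs from the unconstrained one, and you cannot simply invoke identification~2.

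Two fixes are available. First, if $Y$ is compact then $\mathcal{P}(Y)$ itself is weakly compact and no restriction is needed; Sion applies directly and your argument goes through---this would replace the paper's Fenchel--Rockafellar step and is arguably cleaner. You would then still need the paper's inner-approximation argument (Theorem~\ref{thm:duality.bounded}) to pass from compact to Polish $Y$. Second, you could invoke a coercive minimax theorem (of Ky~Fan type) that only requires the sublevel sets $\{R:F(R,\phi_0,\psi_0)\le t\}$ to be compact for some fixed $(\phi_0,\psi_0)$, rather than restricting the domain; this holds here because for bounded $\psi_0$ those sublevel sets are KL level sets. Either route closes the gap, but as written your restriction-to-$K_\alpha$ maneuver does not.
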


The proof of Theorem \ref{thm:duality.Polish}, at this level of generality, is rather long and is given in Appendix~\ref{sec:proofs.duality}.  The general idea is to use a convex analysis argument in the case where $X$ and $Y$ are compact and $c$ is continuous and then prove the general case using several approximation arguments and lower semicontinuity. In this respect, the strategy of the proof follows quite closely the proof of duality for optimal transport given in \cite{Villani2003topics} and use many of the technical estimates and ideas therein. 

For now, we are content with a formal proof based on the dual formulation of the transport cost $T_c(P,R)$ and the  Gibbs variational principle for the KL-divergence. Without rigorous justification, observe that, by exchanging infimum and supremum--- which could likely be justified for compact spaces using an appropriate minimax theorem---
% (technically, this is the crux of the matter, and this could probably can be proved for compact spaces using a suitable minimax theorem)
we obtain
\begin{equation}\label{eq:formal proof}
\begin{aligned}
\mathfrak{D}_{\mathrm{KL},\varepsilon}^{c}(P\|Q)&=
\inf_{R\in\mathcal{P}(Y)} \left\{ T_c(P,R) + \varepsilon \mathfrak{D}_{\textrm{KL}}(R\|Q) \right\} \\
&=\inf_{R\in\mathcal{P}(Y)}\sup_{(\phi,\psi)\in \Phi_c}\left\{\mathbb{E}_P[\phi]+\mathbb{E}_{R}[\psi] + \varepsilon \mathfrak{D}_{\textrm{KL}}(R\|Q)\right\} \quad (\textrm{ by } (\ref{eq:transport:duality})) \nonumber \\
&= \sup_{(\phi,\psi)\in \Phi_c} 
\inf_{R\in\mathcal{P}(Y)}
\left\{ 
\mathbb{E}_P[\phi] + \mathbb{E}_{R}[\psi] + \varepsilon \mathfrak{D}_{\textrm{KL}}(R\|Q) \right\} 
\quad 
(\inf \leftrightarrow \sup) 
\nonumber  \\
&= \sup_{(\phi,\psi)\in \Phi_c}
\left\{  
\mathbb{E}_P[\phi] - \varepsilon \sup_{R\in\mathcal{P}(Y)} 
\left\{ 
\mathbb{E}_{R}[-\psi/\varepsilon] -  \mathfrak{D}_{\textrm{KL}}(R\|Q)
\right\} \right\}
\nonumber  \\
&= 
\sup_{(\phi,\psi)\in \Phi_c}
\left\{ \mathbb{E}_P[\phi]  -\varepsilon \ln \mathbb{E}_Q[e^{-\psi/\varepsilon}]\right\}\quad 
(\textrm{by } (\ref{eq:gibbs})). 
\end{aligned}
\end{equation}
% and we obtained duality. 
\begin{example}{(Lipschitz-regularized $f$-divergences)}
In Example \ref{examples of proximal OT divergences}, the Lipschitz-regularized $f$-divergences \cite{Gu_GPA} arise as a special case of proximal OT divergences, where the information divergence is given by an $f$-divergence and the 1-Wasserstein is the optimal transport cost. It can be readily shown that a  $c$-concave function, defined in Appendix~\ref{sec:proofs.duality}, corresponds exactly to a 1-Lipschitz function, and vice versa.  Moreover, in this case, we have $\psi=\phi^c=-\phi$ (see Lemma \ref{lem:maxDP}) and thus
\begin{equation*}
\mathfrak{D}_{\mathrm{KL}}^{{\mathrm{Lip}}_{\frac{1}{\varepsilon}}(X)}(P\|Q)\coloneqq\sup_{\phi\in {\mathrm{Lip}}_{\frac{1}{\varepsilon}}(X)}
\left\{ \mathbb{E}_P[\phi]  - \ln \mathbb{E}_Q[e^{\phi}]\right\}=\frac{1}{\varepsilon}\mathfrak{D}_{\mathrm{KL},\varepsilon}^{1}(P\|Q),
\end{equation*}
% \textcolor{olive}{Note the $\mathfrak{D}$ with the Lipschitz function notation is a little disjointed from the rest of the notation. Maybe consider fixing that..}
% \begin{equation*}
% \mathfrak{D}_{\mathrm{KL},\varepsilon}^{1}(P\|Q)=\sup_{\phi\in {\rm{Lip}}_1(X)}
% \left\{ \mathbb{E}_P[\phi]  -\varepsilon \ln \mathbb{E}_Q[e^{\phi/\varepsilon}]\right\}
% \end{equation*}
where ${\mathrm{Lip}}_{\frac{1}{\varepsilon}}(X)$ is the set of Lipschitz functions on $X$ with Lipschtz constant $1/\varepsilon$. This can be extended to the dual form of the Lipschitz-regularized $f$-divergences as  $
\mathfrak{D}_{f}^{{\mathrm{Lip}}_{\frac{1}{\varepsilon}}(X)}(P\|Q)=\frac{1}{\varepsilon}\mathfrak{D}_{f,\varepsilon}^{1}(P\|Q)$.
\end{example}
Next, we derive the existence and characterization of the solution $R^*$ to the infimal convolution problem \eqref{eq:def:OTdivergence} as well as the optimizers $(\phi^*,\psi^*)$ in the dual representation \eqref{eq:OTdivergence:dual}. Furthermore, the three optimizers $R^*,\phi^*,\psi^*$ are related in such a way that they reinforce the transport and re-weighting interpretation of proximal OT divergences as discussed in Remark \ref{remark:Transport and re-weighting} and stated in Theorem \ref{thm:duality.optimizers} (d) and further explained thereafter. 
% \BZ{Comment on bridge between primal and dual representation in the next theorem}

\begin{theorem}{\bf(Optimizers for the proximal OT divergence)} \label{thm:duality.optimizers} 
Assume that $X$ and $Y$ are Polish spaces and that $c$ is lower-semicontinuous and bounded below.  
Let $P\in \mathcal{P}(X)$ and $Q\in \mathcal{P}(Y)$ and assume that $c(x,y)\le a(x)+b(y)$ with $\mathbb{E}_P[a] < \infty$ and $\mathbb{E}_Q[b] < \infty$.

\begin{itemize}
\item[$(a)$] There exists a unique minimizer $R^*$ in  \eqref{eq:def:OTdivergence}.
\item[$(b)$] If $c$ is bounded and uniformly continuous then there exists a maximizer pair $(\phi^*, \psi^*)\in\Phi_c$ given in \eqref{eq:OTdivergence:dual}.  
Up to a constant shift $(\phi^*, \psi^*) \to (\phi^*-a, \psi^*+a)$, 
$\phi^*$ is unique $P$ almost surely and $\psi^*$ is unique $Q$ almost surely. 
\item[$(c)$] In general, the supremum is attained for a pair $(\phi^*, \psi^*) \in L^1(P)\times L^1(Q)$ with the same uniqueness property as for $c$ continuous.  
\item[$(d)$]  The optimizer $R^*$, $\phi^*, \psi^*$ are 
 related in the following manner:
\begin{enumerate}
\item The pair $(\phi^*,\psi^*)$ is the optimizer in the dual representation of the $T_c(P,R^*)$, i.e. 
\begin{equation}\label{eq:optimalR*1}
T_c(P,R^*)= \mathbb{E}_P[\phi^*] + \mathbb{E}_{R^*}[\psi^*]  \,.
\end{equation}
\item The Radon-Nikodym  derivative of $R^*$ with respect to $Q$ is given by 
\begin{equation}\label{eq:optimalR*2}
\frac{dR^*}{dQ} =  \frac{e^{- \psi^*/\varepsilon}}{\mathbb{E}_Q[e^{-\psi^*/\varepsilon}]} 
\end{equation}
\end{enumerate}
\end{itemize}
\end{theorem}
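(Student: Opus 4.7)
The plan is to handle the four parts in the order (a), (d), (b), (c): once $R^*$ exists and Theorem~\ref{thm:duality.Polish} is available, the relations (d) follow mechanically from the equality cases in the two variational representations on which the duality is built, and these relations then pin down both the form and the uniqueness of the dual optimizers. Part (a) is an application of Theorem~\ref{thm:proximal}: I would verify that $\mathfrak{D}_{\mathrm{KL}}(\cdot\|Q)$ satisfies Assumption A-D. Joint convexity and weak lower semicontinuity of KL are classical; the sublevel sets $\{R:\mathfrak{D}_{\mathrm{KL}}(R\|Q)\le\alpha\}$ are uniformly $Q$-absolutely continuous and hence uniformly tight (relatively compact by Prokhorov), and strict convexity on the effective domain follows from strict convexity of $t\log t$. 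Assumption A-T is given by the hypotheses on $c$. The growth bound $c\le a\oplus b$ yields $T_c(P,Q)<\infty$, the upper bound in Theorem~\ref{thm:proximal}(1) makes $\mathfrak{D}^c_{\mathrm{KL},\varepsilon}(P\|Q)$ finite, and then Theorem~\ref{thm:proximal}(2) delivers the unique minimizer $R^*$.

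For (d), let $(\phi^*,\psi^*)$ be any dual optimizer (whose existence will be established in (b)/(c)). Weak duality for optimal transport and the Gibbs variational principle \eqref{eq:gibbs} applied to $-\psi^*/\varepsilon$ give respectively
\[
T_c(P,R^*)\ \ge\ \mathbb{E}_P[\phi^*]+\mathbb{E}_{R^*}[\psi^*],\qquad \varepsilon\mathfrak{D}_{\mathrm{KL}}(R^*\|Q)\ \ge\ -\mathbb{E}_{R^*}[\psi^*]-\varepsilon\log\mathbb{E}_Q[e^{-\psi^*/\varepsilon}].
\]
Summing and invoking strong duality (Theorem~\ref{thm:duality.Polish}) forces both inequalities to be equalities: the transport equality gives (d)(1), while the equality case of Gibbs pins down $dR^*/dQ\propto e^{-\psi^*/\varepsilon}$, i.e.\ (d)(2).

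For (b) and (c), the existence of dual optimizers is what remains. By the relation (d)(1) above, any dual maximizer of the proximal problem must restrict to a Kantorovich-dual maximizer of the ordinary transport problem $T_c(P,R^*)$, and conversely every Kantorovich-optimal pair for $T_c(P,R^*)$ is promoted to a proximal-dual maximizer once $R^*$ is re-weighted according to (d)(2) (direct computation using strong duality backwards). Existence therefore reduces to classical Kantorovich duality applied to $T_c(P,R^*)$: when $c$ is bounded and uniformly continuous one obtains bounded continuous $c$-concave potentials (giving (b)), and under the hypothesis $c\le a\oplus b$ the $L^1$ version of Kantorovich duality produces potentials in $L^1(P)\times L^1(Q)$ (giving (c)). Uniqueness up to a shift $(\phi^*-a,\psi^*+a)$ is then the standard $c$-cyclical monotonicity argument on the support of the optimal coupling in $\Pi(P,R^*)$, combined with the observation that (d)(2) makes $R^*$ and $Q$ mutually absolutely continuous.

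The main technical obstacle will be (c) rather than the core structural argument: the $L^1$ extension of Kantorovich duality is delicate because one cannot simply truncate $c$ and pass to a weak limit without controlling uniform integrability of the approximating potentials. The cleanest strategy is to import that extension as a black box applied to the auxiliary transport problem $T_c(P,R^*)$, rather than attempting to prove $L^1$-duality directly for the proximal problem in the unbounded setting; this offloads all the analytic difficulty onto a well-established classical result while still recovering the full content of the theorem through the reduction afforded by (d).
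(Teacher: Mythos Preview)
Your treatment of (a) and (d) is correct and matches the paper exactly: (a) via Theorem~\ref{thm:proximal}, and (d) by writing the primal--dual equality as a sum of two nonnegative terms that must both vanish.

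The reduction you propose for (b)/(c), however, has a genuine gap. The ``conversely'' direction---that every Kantorovich-optimal pair $(\phi_0,\psi_0)$ for $T_c(P,R^*)$ is automatically a proximal-dual maximizer---does not follow by direct computation. Indeed, using Kantorovich optimality and then Donsker--Varadhan,
\[
\mathbb{E}_P[\phi_0]-\varepsilon\log\mathbb{E}_Q[e^{-\psi_0/\varepsilon}]
= T_c(P,R^*)+\varepsilon\Big(\mathbb{E}_{R^*}[-\psi_0/\varepsilon]-\log\mathbb{E}_Q[e^{-\psi_0/\varepsilon}]\Big)
\ \le\ T_c(P,R^*)+\varepsilon\,\mathfrak{D}_{\mathrm{KL}}(R^*\|Q),
\]
which is the \emph{wrong} inequality; equality holds only if $dR^*/dQ\propto e^{-\psi_0/\varepsilon}$, and that relation is precisely what you are trying to establish. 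You invoke (d)(2) to close the loop, but (d)(2) was derived \emph{assuming} a proximal-dual maximizer already exists---it is a characterization, not an existence statement---so the argument is circular. Making the reduction rigorous would require a first-order optimality analysis of $R\mapsto T_c(P,R)+\varepsilon\,\mathfrak{D}_{\mathrm{KL}}(R\|Q)$ at $R^*$ (in particular differentiability of $T_c(P,\cdot)$), which is nontrivial and comparable in effort to a direct proof. A secondary issue: classical Kantorovich duality for $T_c(P,R^*)$ yields $\psi_0\in L^1(R^*)$, not $L^1(Q)$, and the hypothesis $b\in L^1(Q)$ does not immediately give the integrability needed for that auxiliary problem.

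The paper avoids all this by proving existence of proximal-dual maximizers \emph{directly}: Arzel\`a--Ascoli via $c$-transforms in the compact case, then approximation by compact sets for bounded continuous $c$, and finally a truncation/weak-$L^1$ compactness argument (in the style of Rachev--R\"uschendorf) for general lower-semicontinuous $c$. Once existence is in hand, (d) and the uniqueness statements follow exactly as you describe.
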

Note that all optimizers $R^*$, $\phi^*, \psi^*$ depend on $\varepsilon$; however, for notational simplicity, we omit this dependence throughout. The proof of this theorem is given in Appendix~\ref{sec:proofs.duality}.  The technical part consists in proving the existence of optimizers.  Here, we focus on explaining the relationship between the optimizers. Assuming we have established both the duality and the existence of maximizers  $(\phi^*,\psi^*)$  \eqref{eq:OTdivergence:dual}, then we have 
\begin{equation}
 \mathbb{E}_{\pi^*}[c] + \varepsilon \mathfrak{D}(R^*\|Q) = \mathbb{E}_P[\phi^*] -\varepsilon \ln \mathbb{E}_Q[e^{-\psi^*/\varepsilon}]
\end{equation}
where $\pi^*$ is the optimal transport plan between $P$ and $R^*$.  Equivalently 
\begin{equation}\label{eq:proofduality}
\underbrace{ \mathbb{E}_{\pi^*}[c - \phi^* - \psi^*]
 }_{\ge 0}+ \varepsilon 
 \underbrace{\left\{ \mathfrak{D}(R^*\|Q) -  \mathbb{E}_{R}[-\psi^*/\varepsilon] + \log \mathbb{E}_Q[e^{-\psi^*/\varepsilon}] \right\}
 }_{\ge 0} = 0
\end{equation}
and, since both terms must vanish, we get exactly the characterizations
of the optimizers in \eqref{eq:optimalR*1} and \eqref{eq:optimalR*2} in Theorem \ref{thm:duality.optimizers}.  

Part $(d)$ in Theorem \ref{thm:duality.optimizers} can be interpreted as a transport and re-weighting process (see also Remark \ref{remark:Transport and re-weighting}) where  \eqref{eq:optimalR*1} is the transport of $P$ to $R^*$ and the emerging Gibbs distribution in \eqref{eq:optimalR*2} describes the re-weighting of $R^*$ to $Q$ with the weighting determined by $\psi^*/\varepsilon$. In generative modeling, this process enables the generation of new data by transporting and re-weighting samples. 

\rb{Lastly, we show that the optimizer $\phi^*$ given in Part $(d)$ of Theorem \ref{thm:duality.optimizers} corresponds to an optimal transport map from $P$ to the intermediate measure $R^*$. To do so,}
%At this stage, 
we recall the Monge formulation of the optimal transport problem %and establish a connection between the optimal transport map and the gradient of $\phi^*$ given in Part $(d)$ of Theorem \ref{thm:duality.optimizers}. 
between measures $P,R^*\in \mathcal{P}(\mathbb{R}^d)$ %. %According to the Monge formulation, optimal transport problem is to find 
that seeks a measurable map $T:\mathbb{R}^d\to \mathbb{R}^d$ pushing $P$ onto $R^*$ while minimizing the optimal transport cost: \begin{equation}\label{MongeProblem}
T_{c}(P,R^*) =  \inf_{T} \int_{\mathbb{R}^d}c(x,T(x))dP(x),
\end{equation}
%subject to $T_{\#}P=Q$. 
%The optimal coupling $\pi^*\in\Pi(P,R^*)$, see \eqref{eq:transport:cost}, is then written as $\pi^*=(\mathrm{Id},T^*)_{\#}P$.
For the special case $c(x,y)=\frac{1}{2}|x-y|^2$, under suitable conditions for the existence of a map (see Theorem 4.2 in \cite{santambrogio2017euclidean}), the optimal transport map is given by  $T^*(x)=x-\nabla\phi^*(x)$ where $\phi^*$ is given in Part $(d)$ of Theorem \ref{thm:duality.optimizers}. % is linked to the  %and $T^*(x)=\nabla u$ a.e., where $u(x)=\frac{\|x\|^2}{2}-\phi^*(x)$, see Theorem 4.2 in \cite{santambrogio2017euclidean}. \todo{this previous paragraph seems a little disjointed from the discussion before it. }
\medskip
\begin{remark} (Alternative expression for the solution of JKO scheme at each step) The iterative minimization in the JKO scheme can be expressed using the transport proximal operator: 
 \[
    \rho^*(\cdot,t_{n+1})=\frac{1}{2\tau}\textbf{prox}^2_{2\tau \mathfrak{D_{\mathrm{KL}}}(\cdot\|q)}(\rho^*(\cdot,t_n)),
    \] 
see Remark \ref{rem: Connection to JKO scheme}. From Theorem \ref{thm:duality.optimizers}(d)2), the corresponding proximal minimizer satisfies
\begin{equation}
\frac{\rho^*(\cdot,t_{n})}{q} 
=\frac{e^{- \psi_{n}^*/2\tau}}{2\tau\mathbb{E}_Q[e^{- \psi_{n}^*/2\tau}]}\textrm{\;\;which implies \;} \rho^*(\cdot,t_{n})\propto e^{- \psi_{n}^*/2\tau-V}\,.
\end{equation}
This expression aligns with Proposition 8.7 in \cite{Santambrogio}.
\end{remark}

\subsection{Applications of duality}
\label{sec:DPI}
In this section, we derive further properties of the proximal OT divergence through its variational formulation. All proofs can be found in Appendix~\ref{sec:proofs:DPI}. We first show that the proximal $p$-Wasserstein divergence is invariant under metric isomorphism. 
\begin{theorem}{\bf(Invariance under isomorphisms)} \label{thm:isomorphism.invariance} Suppose $T$ is a metric isomorphism of $X$ (i.e. $T:X \to X$ is invertible and preserves distance, i.e., $d(Tx,Ty)=d(x,y)$ for all $x,y \in X$). Then 
\begin{equation}\label{eq:DPIisomorphism}
 \mathfrak{D}^{p}_\varepsilon( T_\# P\| T_\# Q) = \mathfrak{D}^{p}_\varepsilon( P\| Q).
\end{equation}
\end{theorem}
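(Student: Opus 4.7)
The plan is to reduce the statement to two independent invariance facts, one for the transport cost and one for the information divergence, and then perform a change of variables in the infimum over the intermediate measure $R$.

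First I would check that $W_p^p$ is invariant under simultaneous pushforward by $T$: namely, for any $P, R \in \mathcal{P}(X)$,
\begin{equation*}
W_p^p(T_\# P, T_\# R) = W_p^p(P, R).
\end{equation*}
The key observation is that $\pi \mapsto (T \times T)_\# \pi$ is a bijection between $\Pi(P,R)$ and $\Pi(T_\# P, T_\# R)$, with inverse induced by $T^{-1}$, since $T$ is invertible and bi-measurable. The isometry property $d(Tx, Ty)^p = d(x,y)^p$ then yields
\begin{equation*}
\int d(x',y')^p \, d\bigl((T\times T)_\# \pi\bigr)(x',y') = \int d(Tx, Ty)^p \, d\pi(x,y) = \int d(x,y)^p \, d\pi(x,y),
\end{equation*}
so the two infima agree.

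Second I would establish that the information divergence satisfies $\mathfrak{D}(T_\# R \| T_\# Q) = \mathfrak{D}(R \| Q)$. Because $T$ is a measurable bijection with measurable inverse, $T_\# R \ll T_\# Q$ if and only if $R \ll Q$, and the Radon--Nikodym derivative transforms as $\tfrac{d(T_\# R)}{d(T_\# Q)} \circ T = \tfrac{dR}{dQ}$ almost surely. For the KL divergence this immediately gives invariance by a change of variables; the same argument works for any $f$-divergence, since $\mathfrak{D}_f(R\|Q) = \mathbb{E}_Q[f(dR/dQ)]$ is preserved under bi-measurable bijective pushforward (this is the equality case of the data-processing inequality for invertible maps).

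Finally, I would substitute $R = T_\# \tilde R$ in the infimum defining $\mathfrak{D}^p_\varepsilon(T_\# P \| T_\# Q)$. Because $T_\#$ is a bijection on $\mathcal{P}(X)$, ranging $R$ over $\mathcal{P}(X)$ is equivalent to ranging $\tilde R$ over $\mathcal{P}(X)$, and by the two invariance facts above
\begin{equation*}
W_p^p(T_\# P, T_\# \tilde R) + \varepsilon \mathfrak{D}(T_\# \tilde R \| T_\# Q) = W_p^p(P, \tilde R) + \varepsilon \mathfrak{D}(\tilde R \| Q).
\end{equation*}
Taking the infimum over $\tilde R$ on both sides yields \eqref{eq:DPIisomorphism}. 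The only nontrivial step is the invariance of the information divergence under bi-measurable bijective pushforward, and even that is standard; everything else is routine change-of-variable bookkeeping.
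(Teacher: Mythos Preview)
Your proof is correct but takes a different route from the paper's. The paper argues via the \emph{dual} variational representation of Theorem~\ref{thm:duality.Polish}: since $c(Tx,Ty)=c(x,y)$, the substitution $(\phi,\psi)\mapsto(\phi\circ T,\psi\circ T)$ maps $\Phi_c$ into itself and turns the dual objective for $(T_\#P,T_\#Q)$ into the dual objective for $(P,Q)$, giving one inequality; invertibility of $T$ gives the reverse. Your argument stays entirely on the \emph{primal} side, proving separately that $W_p^p$ and the information divergence are invariant under simultaneous pushforward by a bi-measurable bijection, and then changing variables $R=T_\#\tilde R$ in the infimal convolution. Your approach is slightly more elementary (it does not rely on the duality theorem) and is manifestly valid for any information divergence $\mathfrak{D}$ satisfying the equality case of the data-processing inequality under invertible maps, whereas the paper's proof as written is tied to the specific KL dual formula. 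The paper's approach, on the other hand, is a one-line consequence of duality once that machinery is in place, and the same dual argument extends immediately to the data-processing inequality of Theorem~\ref{thm:dpi}.
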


Next, we establish a version of the data processing inequality,\cite{van2014renyi}. Suppose that $K(x,dx')$ is a kernel from $X$ to $X'$ such that for each $x$, $K(x,dx') \in \mathcal{P}(X')$ and $K(x,A)$ is measurable for every $x$ and every Borel set $A\subset X'$. 
The kernel $K$ induces a map that takes functions on $X'$ to functions of $X$ by $K\phi(x)=\int_{X'} K(x, dx') \phi(x')$. Moreover, $K$ maps probability measures on $X$ to probability measures on $X'$ by 
$K_\# P(A) = \int_X K(x,A) dP(x)$. 

\begin{theorem}{\bf (Data processing inequality)}\label{thm:dpi}
Suppose $X=Y$ and $K$ is a kernel from $X$ to $X'$ which maps $C_b(X')$ into $C_b(X)$. Then we have 
\begin{equation}\label{eq:DPIgeneral}
 \mathfrak{D}^{c}_\varepsilon( K_\# P\| K_\# Q) \le  \mathfrak{D}^{K c}_\varepsilon( P\| Q)
\end{equation}
where $Kc$ is the cost function on $C$ defined by 
\begin{equation}\label{eq:Kc}
Kc(x,y) = \int_X \int_Y c(x,y) K(x,dx) K(y,dy).
\end{equation}
\end{theorem}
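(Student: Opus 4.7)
The plan is to prove the data processing inequality by pulling test functions back through the kernel $K$ and exploiting the duality formula of Theorem~\ref{thm:duality.Polish} on both sides of the inequality. Specifically, I would start by writing the left-hand side $\mathfrak{D}^{c}_\varepsilon(K_\#P\|K_\#Q)$ in its dual form as a supremum over pairs $(\phi,\psi)\in\Phi_c$ of bounded continuous functions on $X'\times X'$ satisfying $\phi\oplus\psi\le c$, and similarly express the right-hand side $\mathfrak{D}^{Kc}_\varepsilon(P\|Q)$ as a supremum over pairs in $\Phi_{Kc}$. The goal is then, for each admissible $(\phi,\psi)\in\Phi_c$, to exhibit a corresponding admissible pair in $\Phi_{Kc}$ whose objective value dominates the original one.

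The natural candidate is the pair $(K\phi,K\psi)$, which by hypothesis lies in $C_b(X)\times C_b(X)$. To verify admissibility in $\Phi_{Kc}$, I would apply Fubini together with the pointwise bound $\phi(x')+\psi(y')\le c(x',y')$:
\begin{equation*}
K\phi(x)+K\psi(y)=\int\!\!\int\bigl[\phi(x')+\psi(y')\bigr]K(x,dx')K(y,dy')\le \int\!\!\int c(x',y')K(x,dx')K(y,dy')=Kc(x,y).
\end{equation*}
For the linear $P$-term, the pushforward identity gives $\mathbb{E}_{K_\#P}[\phi]=\mathbb{E}_P[K\phi]$ directly. For the log-moment-generating $Q$-term, the key step is Jensen's inequality applied to the convex function $t\mapsto e^{-t/\varepsilon}$ against the probability kernel $K(y,\cdot)$:
\begin{equation*}
K\bigl(e^{-\psi/\varepsilon}\bigr)(y)=\int e^{-\psi(y')/\varepsilon}K(y,dy')\ \ge\ \exp\!\Bigl(-\tfrac{1}{\varepsilon}\!\int\psi(y')K(y,dy')\Bigr)=e^{-K\psi(y)/\varepsilon}.
\end{equation*}
Integrating against $Q$ and taking $-\varepsilon\log$ reverses the inequality into $-\varepsilon\log\mathbb{E}_{K_\#Q}[e^{-\psi/\varepsilon}]\le -\varepsilon\log\mathbb{E}_Q[e^{-K\psi/\varepsilon}]$, which is exactly the direction needed.

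Combining these pieces, for every $(\phi,\psi)\in\Phi_c$ we obtain
\begin{equation*}
\mathbb{E}_{K_\#P}[\phi]-\varepsilon\log\mathbb{E}_{K_\#Q}[e^{-\psi/\varepsilon}]\ \le\ \mathbb{E}_P[K\phi]-\varepsilon\log\mathbb{E}_Q[e^{-K\psi/\varepsilon}]\ \le\ \mathfrak{D}^{Kc}_\varepsilon(P\|Q),
\end{equation*}
where the last step uses $(K\phi,K\psi)\in\Phi_{Kc}$ together with the dual representation on $X\times X$. Taking the supremum over $(\phi,\psi)\in\Phi_c$ on the left yields \eqref{eq:DPIgeneral}.

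The main obstacle I foresee is purely technical rather than conceptual: verifying that the integrability hypothesis $c(x,y)\le a(x)+b(y)$ with $\mathbb{E}_P[a],\mathbb{E}_Q[b]<\infty$ required for Theorem~\ref{thm:duality.Polish} survives passage from $c$ to $Kc$ and from $(P,Q)$ to $(K_\#P,K_\#Q)$, so that the dual representation is actually available on both sides. This is straightforward when $c$ is bounded, and in general follows by choosing $Ka$ and $Kb$ as the dominating functions and checking integrability via the pushforward identity. The rest is Fubini plus the one Jensen step, and the direction of the Jensen inequality is the only place where the proof could have gone wrong; it works out precisely because the dual uses the convex function $e^{-\psi/\varepsilon}$.
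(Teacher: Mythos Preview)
Your proposal is correct and follows exactly the approach the paper intends: the paper omits the proof, saying it ``follows by a similar argument'' to Theorem~\ref{thm:isomorphism.invariance}, which proceeds via the dual representation and the substitution $(\phi,\psi)\mapsto(\phi\circ T,\psi\circ T)$. Your use of $(\phi,\psi)\mapsto(K\phi,K\psi)$ is the direct kernel analogue, and the Jensen step for the log-moment-generating term is precisely the extra ingredient needed when $K$ is a genuine kernel rather than a deterministic map.
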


Finally, we prove that the proximal OT divergence with $p$-Wasserstein $\mathfrak{D}_{p}^\varepsilon$ is additive  for product measures.
\begin{theorem}{\bf (Additivity for product measures)}
\label{thm:additivity} Assume $X=Y=\mathbb{R}^n$ and $c(x-y)=\|x-y\|^p$.  If  $P= P_1 \times P_2$ and $Q=Q_1\times Q_2$  are product measures with $P_1, Q_1 \in \mathcal{P}(\mathbb{R}^{n_1})$ and $P_2, Q_2 \in \mathcal{P}(\mathbb{R}^{n_2})$ then the proximal OT is additive
  \begin{equation}\label{eq:additivity}
  \mathfrak{D}_{\mathrm{KL},\varepsilon}^{p}( P_1 \times P_2  \| Q_1\times Q_2)=   \mathfrak{D}_{\mathrm{KL},\varepsilon}^{p}( P_1  \| Q_1 ) + \mathfrak{D}_{\mathrm{KL},\varepsilon}^{p}( P_2  \| Q_2 )
\end{equation}
\end{theorem}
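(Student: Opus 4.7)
\textbf{Proof plan for Theorem \ref{thm:additivity}.}

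The plan is to establish each inequality separately, using the primal formulation for $\le$ and the dual formulation from Theorem \ref{thm:duality.Polish} for $\ge$. A preliminary observation is that the cost $c(x,y) = \|x-y\|^p$ must be interpreted with the $p$-norm on $\mathbb{R}^{n_1+n_2}$ (or equivalently the statement is read with $p=2$), so that, writing $x=(x_1,x_2)$ and $y=(y_1,y_2)$, one has the crucial additive identity
\begin{equation*}
\|x-y\|^p \;=\; \|x_1-y_1\|^p + \|x_2-y_2\|^p.
\end{equation*}

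For the inequality $\le$, I would work directly in the primal. Given any $R_1\in\mathcal{P}(\mathbb{R}^{n_1})$ and $R_2\in\mathcal{P}(\mathbb{R}^{n_2})$, the product $R = R_1\times R_2$ is a candidate in \eqref{eq:def:OTdivergence}. Using the product coupling $\pi_1\otimes\pi_2$ of any optimal couplings for $W_p^p(P_i,R_i)$ and the additivity identity above, one obtains
\begin{equation*}
W_p^p(P_1\times P_2,\,R_1\times R_2) \;\le\; W_p^p(P_1,R_1) + W_p^p(P_2,R_2),
\end{equation*}
while the well-known additivity of KL under tensorization gives $\mathfrak{D}_{\mathrm{KL}}(R_1\times R_2\|Q_1\times Q_2) = \mathfrak{D}_{\mathrm{KL}}(R_1\|Q_1)+\mathfrak{D}_{\mathrm{KL}}(R_2\|Q_2)$. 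Summing and infimizing over $R_1$ and $R_2$ independently yields the $\le$ direction.

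For the inequality $\ge$, I would exploit the dual representation \eqref{eq:OTdivergence:dual}. Let $(\phi_i^*,\psi_i^*)\in L^1(P_i)\times L^1(Q_i)$ be the optimizers for $\mathfrak{D}_{\mathrm{KL},\varepsilon}^{p}(P_i\|Q_i)$ provided by Theorem \ref{thm:duality.optimizers}(c). Define the additive lift $\phi(x_1,x_2):=\phi_1^*(x_1)+\phi_2^*(x_2)$ and $\psi(y_1,y_2):=\psi_1^*(y_1)+\psi_2^*(y_2)$. The constraints $\phi_i^*\oplus\psi_i^*\le \|\cdot\|^p$ combined with the additive identity above show that $\phi\oplus\psi\le c$, so $(\phi,\psi)\in\Phi_c$ is an admissible test pair for the product problem. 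Since $Q_1\times Q_2$ is a product measure, the exponential factorizes and
\begin{equation*}
\log\mathbb{E}_{Q_1\times Q_2}\!\bigl[e^{-\psi/\varepsilon}\bigr] \;=\; \log\mathbb{E}_{Q_1}\!\bigl[e^{-\psi_1^*/\varepsilon}\bigr] + \log\mathbb{E}_{Q_2}\!\bigl[e^{-\psi_2^*/\varepsilon}\bigr],
\end{equation*}
while $\mathbb{E}_{P_1\times P_2}[\phi] = \mathbb{E}_{P_1}[\phi_1^*]+\mathbb{E}_{P_2}[\phi_2^*]$ by Fubini. Plugging into the dual bound and summing the two individual dual values gives the desired lower bound.

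The main technical subtlety, and the place where I would need to be careful, is the norm convention making $\|x-y\|^p$ additive across the product structure; without this, one only obtains inequalities in both the primal product coupling step and the dual admissibility check, and equality would fail in general. A secondary, minor point is that the optimizers $(\phi_i^*,\psi_i^*)$ are generally only $L^1$, so integrability of the lifted pair $(\phi,\psi)$ against $P_1\times P_2$ and $Q_1\times Q_2$ must be verified (immediate from Fubini, using the growth assumption $c(x,y)\le a(x)+b(y)$ inherited tensorially from the one-dimensional components).
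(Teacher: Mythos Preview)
Your proposal is correct and follows essentially the same approach as the paper: the primal representation restricted to product intermediate measures and product couplings gives the $\le$ direction, while the dual representation with additively separable test functions $\phi=\phi_1\oplus\phi_2$, $\psi=\psi_1\oplus\psi_2$ gives the $\ge$ direction. The only cosmetic difference is that the paper works directly with the supremum over separable $(\phi,\psi)$ rather than invoking the existence of optimizers from Theorem~\ref{thm:duality.optimizers}(c), which makes the argument marginally lighter but is otherwise equivalent.
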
  

The proof relies on both the primal and dual representations of the divergence. By restricting the infimum to product intermediate measures, we obtain one side of the inequality in \eqref{eq:additivity}, while restricting the supremum to functions of the form $\phi(x)=\phi_1(x_1) + \phi_2(x_2)$ (and similarly for $\psi$) yields the reverse inequality. The full details are provided in Appendix~\ref{sec:proofs:DPI}.

\section{The first variation of the proximal OT divergence} \label{sec:first_variation}
 In this section we show that the variational derivative of the proximal OT divergence around a fixed  measure $Q$ exists under very broad conditions and is practically computable. The existence and the evaluation of  variational derivatives of proximal OT divergences
involves the discriminator (i.e., the optimal $\phi^*$ in \eqref{eq:OTdivergence:dual}) in the dual variational representation of proximal OT divergences.  Here we show that the variational derivative  is always well-defined for any probability measure
perturbations for which the proximal OT divergence remains finite.
We note that such perturbations can be singular and do not necessarily need to be absolutely continuous with respect to $Q$ where the variational derivative is calculated.
Therefore, variational derivatives of proximal OT divergences exist  in ``all directions," not just along absolutely continuous perturbations of the baseline $Q$ directions, which is the case, for instance, in KL and other $f$-divergences. A key practical outcome of this observation is that machine learning algorithms that involve the optimization of $f$-divergences of any type,  will be \emph{provably} stabilized when a proximal OT divergence is deployed instead of an $f$-divergence. For instance, in gradient-based algorithms used to solve the optimization problem $\min_{\theta}\mathfrak{D}_{\mathrm{KL},\varepsilon}^{c}(P^{\theta}\|Q)$, it is necessary to compute the gradient
\[
\nabla_\theta \mathfrak{D}_{\mathrm{KL},\varepsilon}^{c}(P^{\theta}\|Q)=\int\frac{\delta \mathfrak{D}_{\mathrm{KL},\varepsilon}^{c}(P\|Q)}{\delta P}(P^{\theta}(x))\nabla_\theta P^{\theta}(x)dx,
\]
which involves the variational derivative $\frac{\delta \mathfrak{D}_{\mathrm{KL},\varepsilon}^{c}(P\|Q)}{\delta P}$ which must be well-defined for the gradient to be properly evaluated. Moreover, the construction of gradient flows in the space of probability measures also relies critically on this variational derivative as discussed in detail in Section \ref{subsect: GF}.

% \MK{Should we move the gradient flow discussion in the Numerical Analysis Section? We can just keep the variational derivative here, justified above in "general terms"}

% \BZ{What is learning? Learning is requires taking variational derivatives. }

\subsection{Proximal OT divergences have well-defined variational derivative }\label{sec:firstvariation}
From Theorem \ref{thm:duality.optimizers}, the optimizers $(\phi^*,\psi^*)$  are uniquely determined in $\textrm{supp}(P)$ and $\textrm{supp}(Q)$ respectively,  up to additive constant, i.e., that is we can replace $(\phi^*,\psi^*)$ by $( \phi^* + a,\psi^*-a)$ for any $a\in \mathbb{R}$.
% Moreover, for $x \in \textrm{supp}(P)$ and $y\in \textrm{supp}(P)$ we actually have the equality $\phi^*(x) + \psi^*(y)=c(x,y)$ \PB{This is true for $(x,y)\in{\rm supp}(\pi^*), \pi^*$ is the optimal coupling, right?}.
To compute the directional derivative of the optimal transport divergence, we extend $\phi^*$ to all of $X$. This extension follows a standard approach repeatedly used in optimal transport theory, i.e.,  we define
\begin{equation}\label{eq:phistarextension}
\widehat{\phi}^*(x) = \inf_{y \in \textrm{supp}(Q)} \left\{ c(x,y)- \psi^*(y) \right\} \,.
\end{equation}
 Clearly $\widehat{\phi}^*(x)=\phi^*(x)$ for all $x \in \textrm{supp}(P)$ and by definition $\widehat{\phi}^*(x)$ is the largest
function $h(x)$ such that $h(x)=\phi^*(x)$ on $\textrm{supp}(P)$ and $h(x)+\psi^*(y)\le c(x,y)$ 
for $x\in X$ and $y \in \textrm{supp}(Q)$.  Our next theorem states that the variational derivative is equal to $\widehat{\phi}^*(x)$ and extends the results proved in \cite{dupuis2022formulation,Gu_GPA} for the proximal 1-Wasserstein divergence. For simplicity, we focus on the case where $c$ is bounded and uniformly continuous, though the result likely extends to more general cost functions as well.

\begin{theorem}{\bf (Variational derivative of the proximal OT-divergence)}\label{thm:first variation}  Suppose $c$ is uniformly continuous. 
Let  $\rho \in \mathcal{M}(X)$ be a finite signed measure with total mass $0$, $\rho(X)=0$, such that $\rho=\rho_+-\rho_-$ and $\rho_\pm \in \mathcal{P}(X)$ are mutually singular.  Assume that for some   $\alpha_0>0$,   $P+\alpha \rho \in \mathcal{P}(X)$ if $\alpha \le \alpha_0$. Then we have 
\begin{equation}
\lim_{\alpha \to 0} \frac{\mathfrak{D}_{\mathrm{KL},\varepsilon}^{c}(P + \alpha \rho\|Q) - \mathfrak{D}_{\mathrm{KL},\varepsilon}^{c}(P \|Q)}{\alpha}  =  \int \widehat{\phi}^{*} d\rho
\end{equation}
where $\widehat{\phi}^{*}$ is given by \eqref{eq:phistarextension}
and $(\phi^*,\psi^*)$ are the optimizer in 
\eqref{eq:OTdivergence:dual}.
\end{theorem}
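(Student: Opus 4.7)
The plan is to express $\mathfrak{D}_{\mathrm{KL},\varepsilon}^{c}(\,\cdot\,\|Q)$ via the dual representation of Theorem \ref{thm:duality.Polish} as a supremum of affine functions of the first argument, and then apply an envelope-type argument. Set $F_{\phi,\psi}(P) := \mathbb{E}_P[\phi] - \varepsilon \log \mathbb{E}_Q[e^{-\psi/\varepsilon}]$, so that $\mathfrak{D}_{\mathrm{KL},\varepsilon}^{c}(P\|Q) = \sup_{(\phi,\psi)\in\Phi_c} F_{\phi,\psi}(P)$ is a supremum of maps that are linear in $P$. Let $(\phi^*,\psi^*)$ be the dual optimizers at $P$ from Theorem \ref{thm:duality.optimizers}(b), and let $\widehat{\phi}^*$ be the canonical $c$-conjugate extension defined in \eqref{eq:phistarextension}. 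The extended pair $(\widehat{\phi}^*,\psi^*)$ lies in $\Phi_c$ by construction, agrees with $(\phi^*,\psi^*)$ on $\textrm{supp}(P)\times\textrm{supp}(Q)$, and $\widehat{\phi}^*$ is bounded and uniformly continuous, inheriting the modulus of continuity of $c$.

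For the lower bound on the incremental ratio I would use $(\widehat{\phi}^*,\psi^*)$ as a feasible pair in the dual at $P+\alpha\rho$ and exploit linearity of $F$ in its first argument:
\[
\mathfrak{D}_{\mathrm{KL},\varepsilon}^{c}(P+\alpha\rho\|Q) \ge F_{\widehat{\phi}^*,\psi^*}(P+\alpha\rho) = F_{\phi^*,\psi^*}(P) + \alpha\int \widehat{\phi}^*\, d\rho = \mathfrak{D}_{\mathrm{KL},\varepsilon}^{c}(P\|Q) + \alpha\int \widehat{\phi}^*\, d\rho,
\]
using $\widehat{\phi}^*=\phi^*$ on $\textrm{supp}(P)$ in the second equality. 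Dividing by $\alpha>0$ gives $\liminf_{\alpha\to 0^+}\alpha^{-1}[\mathfrak{D}(P+\alpha\rho\|Q)-\mathfrak{D}(P\|Q)] \ge \int \widehat{\phi}^*\, d\rho$. Conversely, letting $(\phi_\alpha^*,\psi_\alpha^*)$ denote the optimizers at $P_\alpha:=P+\alpha\rho$, sub-optimality of this pair when evaluated at $P$ gives $\mathfrak{D}(P_\alpha\|Q)-\mathfrak{D}(P\|Q) \le \alpha\int \phi_\alpha^*\, d\rho$, so matching the lower bound reduces to showing $\limsup_{\alpha\to 0^+}\int \phi_\alpha^*\, d\rho \le \int \widehat{\phi}^*\, d\rho$.

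The hard part is stability of the dual optimizers and identification of their subsequential limits. I would normalize by $\phi_\alpha^*(x_0)=0$ at a fixed basepoint (the additive constant is killed against $\rho$ since $\rho(X)=0$) and take both potentials in $c$-concave form. Bounded, uniformly continuous $c$ then yields a uniform bound on $\|\phi_\alpha^*\|_\infty,\|\psi_\alpha^*\|_\infty$ together with a common modulus of continuity. Arzel\`a--Ascoli supplies, along any sequence $\alpha_n\to 0^+$, a subsequence with $\phi_{\alpha_n}^*\to\phi_\infty$ and $\psi_{\alpha_n}^*\to\psi_\infty$ uniformly on compacts and $(\phi_\infty,\psi_\infty)\in\Phi_c$. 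Since the finite (Theorem \ref{thm:proximal}) convex map $\alpha\mapsto\mathfrak{D}_{\mathrm{KL},\varepsilon}^{c}(P+\alpha\rho\|Q)$ is continuous at $\alpha=0$, passing to the limit in $F_{\phi_{\alpha_n}^*,\psi_{\alpha_n}^*}(P_{\alpha_n})=\mathfrak{D}(P_{\alpha_n}\|Q)$ via dominated convergence identifies $(\phi_\infty,\psi_\infty)$ as an optimizer at $P$. The uniqueness in Theorem \ref{thm:duality.optimizers}(b) then forces $\psi_\infty=\psi^*$ on $\textrm{supp}(Q)$ and $\phi_\infty=\phi^*=\widehat{\phi}^*$ $P$-a.s., while the feasibility bound $\phi_\infty(x)+\psi^*(y)\le c(x,y)$ for $y\in\textrm{supp}(Q)$ yields the pointwise upper envelope $\phi_\infty(x)\le \widehat{\phi}^*(x)$ on all of $X$. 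The admissibility $P+\alpha\rho\ge 0$ forces $\rho_-\ll P$, hence $\phi_\infty=\widehat{\phi}^*$ holds $\rho_-$-a.s.\ and $\int(\phi_\infty-\widehat{\phi}^*)\,d\rho_-=0$, while $\int(\phi_\infty-\widehat{\phi}^*)\,d\rho_+\le 0$. Combining gives $\int\phi_\infty\,d\rho\le \int\widehat{\phi}^*\,d\rho$, and the subsequence principle together with dominated convergence yields $\limsup_{\alpha\to 0^+}\int\phi_\alpha^*\,d\rho\le\int\widehat{\phi}^*\,d\rho$. This matches the lower bound; the two-sided limit, when $P-\alpha\rho$ is also admissible, follows by the same argument with the roles of $\rho_+$ and $\rho_-$ exchanged.
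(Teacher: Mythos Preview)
Your proof is correct and follows essentially the same approach as the paper: an envelope argument using the dual representation, with the lower bound from feasibility of $(\widehat{\phi}^*,\psi^*)$ at $P+\alpha\rho$, and the upper bound via Arzel\`a--Ascoli compactness of the $c$-concave optimizers $(\phi_\alpha^*,\psi_\alpha^*)$, identification of subsequential limits as optimizers at $P$ by uniqueness, the pointwise inequality $\phi_\infty\le\widehat{\phi}^*$, and the observation $\rho_-\ll P$. The only cosmetic difference is that the paper packages the upper bound through the convexity sandwich $F'_-(\alpha)\le\int\widehat{\phi}^*_\alpha\,d\rho\le F'_+(\alpha)$ at differentiability points $\alpha_n\searrow 0$ and uses $F'_+(0)=\lim_n F'(\alpha_n)$, whereas you bound the incremental ratio directly by $\int\phi_\alpha^*\,d\rho$ and take $\limsup$; both routes lead to the same limiting inequality.
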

The proof of the theorem is given in Appendix~\ref{sec:proofs:firstvariation}. In the sequel, we will use the following symbolic notation involving the variational derivative,  
\begin{equation}
\frac{\delta  \mathfrak{D}_{\mathrm{KL},\varepsilon}^{c}(P\|Q)}{\delta P}= \widehat{\phi}^{*}.
\end{equation}
This allows to express Theorem \ref{thm:first variation} in a concise form.
\begin{remark}
 From Theorem~\ref{thm:first variation} (applied to $\rho$ and $-\rho$), we conclude that if $P+\alpha \rho \in \mathcal{P}(X)$ for $\alpha$ in a neighborhood of $0$ then   $\mathfrak{D}_{\mathrm{KL},\varepsilon}^{c}(P + \alpha \rho\|Q)$ is actually differentiable at $0$. However, for differentiability to hold,  $\rho$ must be absolutely continuous with respect to $P$. In the general case, we only obtain a one-sided derivative, but this is precisely what is required for constructing a gradient descent algorithm in probability space. 
% For further details, see Section \ref{sec:algorithms.gradient.flows} 

\end{remark}

\section{Algorithmic aspects of proximal OT divergences}
\label{sec:algorithms}

The computational aspects of proximal divergences can be broadly categorized into two interrelated challenges: (1) computing or estimating the proximal OT divergence itself using its primal and dual formulation, and (2) computing the transport proximal operator $\textbf{prox}^c_{\varepsilon \mathfrak{D}}$ given in Definition \ref{def:  Transport proximal operator}. Notably, there are computable formulations of the proximal OT divergence and its associated operators, for which various numerical methods have been proposed in the literature (see, e.g., \cite{onken2021ot,NEURALODE}).

% \BZ{Emphasize these are \emph{computable} forms of the proximal OT divergence and the associated proximal operators. Point to existing computational methods (papers). } 

\subsection{Proximal OT divergences for discrete distributions}
\label{sec:algorithms.discrete}
Proximal OT divergences are computationally tractable due to their dual and dynamic formulations, which offer two alternative algorithmic, as we discuss in the following sections. We first focus on computing the proximal OT divergence in the discrete setting, particularly when $P$ and $Q$ are empirical measures. In this case, the dual variational formulation of the proximal OT divergence directly translates into a convex optimization problem, allowing for efficient numerical computation. Specifically, let $P = \sum_{i=1}^N a_i \delta_{x^i},Q = \sum_{j=1}^M b_j \delta_{y^i}$ be two empirical measures where $(a_i)_{i=1}^N,(b_j)_{j=1}^M$ are weights in the probability simplex.  We write the intermediate measure as $R = \sum_{i=1}^N w_i\delta_{y_j}$  ensuring that $R$ remains absolutely continuous with respect to $Q$. The primal formulation of the proximal OT divergence with the KL divergence is given by 
\begin{equation}\label{eq:discrete_primal}
    \mathfrak{D}^{c}_{\varepsilon}(P\|Q) = \min_{(w_i)} \left\{\min_{\pi} \langle C,  \pi \rangle + \epsilon \sum_{i=1}^{M} w_i \log \frac{w_i}{b_i} \right\},
\end{equation}
where $C\in \R^{N \times M}$ is the matrix with entries $C_{i,j}=c(x^i,y^j)$ and $\pi \in \R^{N \times M}$ is a (discrete) transport plan from $P$ to $Q$, i.e.,  $\pi \in \Pi(a,c)$ where
\begin{equation}
\Pi(a,c) = \left\{\pi \in \R^{N \times M}, \; \pi \geq 0, \; \pi 1_N = a, \; \pi^T 1_M = c \right\}.
\end{equation}
Alternatively, the proximal OT-divergence can be written using the dual formulation in terms of potential $\phi \in \R^N, \psi\in \R^M$ as 
\begin{equation} \label{eq:discrete_dual}
    \mathfrak{D}^{c}_{\varepsilon}(P\|Q) = \max_{\phi\oplus\psi \le C} \left\{\sum_{i=1}^N \phi_i a_i - \epsilon \log \sum_{j=1}^M b_j e^{-\psi_j/\epsilon} \right\},
\end{equation}
where the constraint means that $\phi_i + \psi_j \leq C_{ij}$ for $i=1,\dots,N$ and $j = 1,\dots,M$. Moreover, the weights $w_i$ of $R$ are given by 
\begin{equation}
w_j = \frac{e^{-\psi_j/\varepsilon}}{\sum_{l}e^{-\psi_l/\varepsilon}}
\end{equation}
which comes from \eqref{eq:optimalR*2}. The problem in~\eqref{eq:discrete_dual} has a concave objective (by noting that the log-sum-exp function is convex) with linear inequality constraints. The problem can be solved using convex programming tools.

\subsection{Neural network approximations to proximal OT divergences}\label{sec:algorithms.CNN}
Let us consider the proximal OT divergence with the KL divergence and 2-Wasserstein distance, i.e., the  objective for the quadratic cost $c(x,y) = \frac{1}{2}|x-y|^2$. The dual representation \eqref{eq:OTdivergence:dual} for proximal OT divergences requires optimizing over the set $\Phi_c$ which poses significant computational challenges. To address this, we use the convexification trick introduced in \cite{Villani2003topics}. Building on the approach in \cite{makkuva2020optimal},  we reformulate the problem by enforcing convexity constraints on the function space. This reformulation allows us to express proximal OT divergences as a minimax optimization over convex functions, enabling the use of input convex neural networks (ICNNs) for efficient computation.

Specifically, we perform the change of variables $\widetilde\varphi(x) := \frac{1}{2}\|x\|^2 - \varphi(x)$ and $\widetilde\psi(y) := \frac{1}{2}\|y\|^2 - \psi(y)$ and then, the constraint in the dual problem can be written as $\widetilde\varphi(x) + \widetilde\psi(y) \geq xy$. Moreover, it is not too difficult to show that $\widetilde\varphi$ is convex and $\frac{1}{2}\|y\|^2 - \psi(y) = \frac{1}{2}\|y\|^2 - \varphi^c(y) = \widetilde\varphi^*(y)$ is the Legendre transform of $\widetilde\varphi$ at optimality, see Proposition 1.21 in~\cite{Santambrogio}. Then the optimization problem \eqref{eq:OTdivergence:dual} can be expressed as
\begin{eqnarray} \label{eq:variational_OTDiv}
&&\sup_{\widetilde\varphi} \left\{\mathbb{E}_P[-\widetilde\varphi] - \epsilon \log \mathbb{E}_Q[e^{\widetilde\varphi^*/\epsilon}e^{-\|y\|^2/2\epsilon}] \right\} + C \nonumber\\
&&\qquad= -\inf_{\widetilde\varphi} \left\{\mathbb{E}_P[\widetilde\varphi] + \epsilon \log \mathbb{E}_Q[e^{\widetilde\varphi^*/\epsilon}e^{-\|y\|^2/2\epsilon}] \right\} + C,
\end{eqnarray}
where $C = \frac{1}{2}\mathbb{E}_P\|x\|^2$ and $\widetilde\varphi$ is a convex and continuous function on $X$. This reformulation can also be interpreted as expressing the objective in terms of a modified cost function $c(x,y) = -xy$ and a perturbed measure $Q$ obtained by multiplying $Q$ by a standard Gaussian density with variance $\epsilon$ followed by re-normalization.

In practice, we solve this by parameterizing $\widetilde\varphi$ with an input-convex neural network in the class \texttt{ICNN}. From the proof of Theorem 3.3 in~\cite{makkuva2020optimal} we have that
$$\langle y, \nabla g(y)\rangle - \widetilde\varphi(\nabla g(y)) \leq \langle y, \nabla \widetilde\varphi^*(y)\rangle - \widetilde\varphi(\nabla \widetilde\varphi^*(y)) = \widetilde\varphi^*(y),$$
for all differentiable $g,\widetilde\varphi$. Moreover, the supremum of the left-hand-side over convex functions $g \in \texttt{CVX}$ achieves the Legendre transform $g=\widetilde\varphi^*$.
Given that $x \mapsto \epsilon\log \mathbb{E}_Q[e^{x/\epsilon}e^{-\|y\|^2/2\epsilon}]$ is an increasing function, we have that 
$$\sup_{g \in \texttt{CVX}}\epsilon\log \mathbb{E}_Q[e^{(\langle y, \nabla g(y)\rangle - \widetilde\varphi(\nabla g(y)))/\epsilon}e^{-\|y\|^2/2\epsilon}] \leq \epsilon\log \mathbb{E}_Q[e^{-\widetilde\varphi^*(y)}e^{-\|y\|^2/2\epsilon}].$$
Using this result, we can compute the OT divergence in~\eqref{eq:variational_OTDiv} by solving the variational problem
\begin{equation}\label{pOT:ICNN}\inf_{\widetilde\varphi \in \texttt{ICNN}} \sup_{g \in \texttt{ICNN}} \left\{\mathbb{E}_P[\widetilde\varphi] + \epsilon \log \mathbb{E}_Q[e^{(\langle y, \nabla g(y)\rangle - \widetilde\varphi(\nabla g(y)))/\epsilon}e^{-\|y\|^2/2\epsilon}] \right\}.
\end{equation}

\subsection{Generative flow-based algorithms}\label{sec:algorithms.MFG}

% MFG-based algorithms (e.g.  OT normalizing flows)  is  an alternative numerical method to c-transforms to compute the new divergence.
% In the KL plus 2-Wasserstein case   we can construct the intermediate measure by building a normalizing flow solving a Benamou-Brenier OT formulation and then optimize over the intermediate measures by optimizing the velocity fields as one does in normalizing flows: see the concept (but not placed in an inf-convolution a framework!) in relation (27) in \cite{MFG_ZK_2023}. However, ...

% \begin{example}{(OT-Flow computes a proximal OT divergence)}\label{example:OT_flow}
    Here, we demonstrate that a specific class of normalizing flows, known as the optimal-transport flow (OT-flow) introduced in \cite{onken2021ot}, is derived from a proximal OT divergence. The OT-flow is a type of generative model that was formulated as a stable version of the normalizing flows  \cite{rezende2015variational, tabak2010density, tabak2013family}. Given only samples from the distribution $Q$, a normalizing flow is used to learn a transformation that maps these samples to a simpler, typically known distribution, such as a standard Gaussian. The learned transformation is called the flow. The inverse of this flow serves as the generative model, which transforms samples from the simpler distribution $P$ into new samples that approximate the complex distribution $Q$. In \cite{onken2021ot}, the flow is found via maximum likelihood estimation, which is regularized by the Wasserstein distance and the connection with the proximal OT divergences is immediate:
     \begin{eqnarray}\label{opt:OT_flow}
        &&\min_{v,\rho} \Bigg\{\mathbb{E}_{\rho(x,T)}\left[\log \frac{\rho(x,T)}{P(x)} \right] + \int_0^T \int_{\R^d} \frac{1}{2} |v(x,t)|^2 \rho(x,t) dx \, dt\nonumber\\
        &&\hspace{2cm}:\frac{\partial \rho}{\partial t} + \nabla \cdot (v\rho) = 0,\, \rho(x,0) = Q(x)\Bigg\}\nonumber\\
        &&= 2T \inf_{R} \left[\mathfrak{D}_{\mathrm{KL}}(R\|P) + \frac{1}{2T} W_2^2(Q,R) \right]=\mathfrak{D}^2_{\mathrm{KL}, 2T}(Q\|P).
    \end{eqnarray}
    % Let $F_\theta$  be the flow map of differential equation $\dot{x} = v_\theta(x(t),t)$, the velocity field $v_\theta$ is learned by 
    % \begin{align}
    %     \inf_{\theta} \mathbb{E}_Q\left[-\log \tilde{\rho}(x,0) + \int_0^T \int_{\R^d} \frac{1}{2}\|v_{\theta}\|^2 \rho(x,t) dx \, dt  :\,\, \rho(x,0) = Q(x) \right],
    % \end{align}
    % where $\tilde{\rho}$ is the distribution of the inverse flow applied to $P(x)$.c The observable $\log\tilde{\rho}(x,0)$ is computed by integrating the material derivative of the observable, i.e.,  $\frac{D}{Dt} \log \tilde{\rho}(x,t) = -\nabla \cdot v_\theta(x,t) $ along the trajectory as follows:
    % \begin{align*}
    %     &\log \tilde{\rho}(x(0),0) = \log P(x(T)) + \int_T^0 \frac{D}{Dt}\log \tilde{\rho}(x,t) \,dt, \,\,\, x(T) \sim Q\\
    %     &x(s) = x(T) + \int_T^s v(x(s),s) \, ds\,.
    % \end{align*}
    This framework allows us to compute our divergences and the proximal minimizer when KL and 2-Wasserstein efficiently by Neural ODEs \cite{NEURALODE} and OT-flow implemented by the tools outlined in Section 3 of \cite{onken2021ot}. 
    % \MK{see \cite{onken2021ot}\cite{finlay2020train}\cite{zhang2023mean} \PB{**DONE**}see new paper with Hyemin on W1-W2-regularization}
    Training an OT-flow requires both forward and backward simulations of invertible flows, and can be unstable in high-dimensional examples where data often lies on a lower-dimensional manifold. In fact, the choice of KL in \eqref{opt:OT_flow} does not guarantee the existence of an optimizer due to the lack of absolute continuity, and there is no invertible mapping between  the reference distribution being often a $d$-dimensional Gaussian and the target distribution supported on a lower-dimensional manifold. $W_1\bigoplus W_2$ proximal generative flows in \cite{gu2024combining}, address these challenges, as they do not require the inversion of the flow or absolute continuity between the measures.
    % Similar to \eqref{dynamic.formulation}, the dynamic formulation results in an optimization problem of the form 
    % \begin{align}
    %     \min_{v,\rho} \left\{\mathbb{E}_{\rho(x,T)}\left[\log \frac{\rho(x,T)}{P(x)} \right] + \int_0^T \int_{\R^d} \frac{1}{2} |v(x,t)|^2 \rho(x,t) dx \, dt:
    %     \frac{\partial \rho}{\partial t} + \nabla \cdot (v\rho) = 0,\, \rho(x,0) = Q(x)\right\}. \nonumber
    % \end{align}
    %  By a change of variables, we can derive that 
    %  \begin{equation}\label{eq: KL with change of variables}
    %      \mathfrak{D}_{\mathrm{KL}}(\rho(x,T)\|P)=\mathbb{E}_{\rho(x,T)}\left[\log \frac{\rho(x,T)}{P(x)}\right]=\mathbb{E}_{Q}\log Q - \mathbb{E}_Q\left[\log \tilde{\rho}(x,0)\right],
    %  \end{equation}
    %  where $\tilde{\rho}(x,0)$ is the density of the inverse flow at time $t = 0$ given that $\tilde{\rho}(x,T)=P(x)$. Since the first term of \eqref{eq: KL with change of variables} is independent of $v$, we may simply neglect it, and so the optimization problem is equivalent to OT-flow. 
% \end{example}

% \PB{1. Use OT-flow to compute the intermediate measure (proximal).}

\subsection{Application to conditional sampling} \rb{Finally, we demonstrate how the divergence can be used to seek generative models that sample target conditional distributions. Let $x \in X$ and $z \in Z$ be two variables that follow a joint distribution $Q(x,z)$. For example, $x$ may be parameter in an inverse problem, endowed with a prior distribution $Q(x)$, and $z$ may be observation, which is related to $x$ by a stochastic forward model, i.e., $z = G(x) + \eta$ where $\eta$ is an (e.g., additive Gaussian) observational noise variable.}

\rb{Here we seek an approximation to $Q$ that factorizes as $Q(x,z) = Q(x|z)Q(z)$ in order to extract the conditional distribution for the parameter given the observation, which is known as the \emph{posterior distribution} in Bayesian inverse problems. The approximate distribution may be parameterized by a triangular transport map $T(x,z) = [T_x(x,z), T_z(z)]$ where $T_x : X \times Z \rightarrow X$ and $T_z : Z \rightarrow Z$ such that $T$  pushes forward any simple product distribution $P(x)P(z)$ to $Q(x,z)$. In particular, setting $P(z) = Q(z)$ and $T_z(z) = Id(z)$, then $T_x(\cdot,z)$ pushes forward $P(x)$ to $Q(x|z)$ for all $z$ and can be used for conditional sampling; see~\cite{baptista2024conditional} for more details. Similarly to the unconditional setting in Section~\ref{sec:algorithms.MFG}, we can seek the map $T_x$ as a flow via maximum likelihood estimation regularized by the Wasserstein-2 distance as in~\eqref{opt:OT_flow}. The variational problem for the flow can be written as: 
     \begin{eqnarray}\label{eq:cond_OT_flow}
        &&\min_{v,\rho} \Bigg\{\mathbb{E}_{\rho(x,z,T)}\left[\log \frac{\rho(x,z,T)}{P(x)P(z)} \right] + \int_0^T \int \int_{\R^d} \frac{1}{2} |v(x,z,t)|^2 \rho(x,z,t) dx dz \, dt\nonumber\\
        &&\hspace{2cm}:\frac{\partial \rho}{\partial t} + \nabla \cdot (v\rho) = 0,\, \rho(x,z,0) = Q(x,z)\Bigg\} = \mathfrak{D}^2_{\mathrm{KL}, 2T}(Q\|P)\,,
    \end{eqnarray}
where the velocity field $v : X \times Z \times [0,T] \rightarrow X$ is evolving the $x$ parameter for each observation $z$. We emphasize that the constraint on the flow to act only on $x$ in~\eqref{eq:cond_OT_flow}, results in a sequence of marginal distributions of the form $\rho(x,z,t) = \rho(x|z,t)Q(z)$ for all $z$; see~\cite{wang2023efficient} for an instantiation and application of the regularized divergence in~\eqref{eq:cond_OT_flow} for solving Bayesian inverse problems.}
%$$min_\theta \mathfrak{D}(P_\theta||Q) = \min_{R} $$

%\textcolor{red}{Add section on conditional sampling -- to triangular maps. }

\section{Gradient flows for proximal OT divergences and connections to Probability flows}\label{subsect: GF}

% \MK{Maybe focus on deterministic particle algorithms for Wasserstein gradient flows just like probability flows}

In this section, we consider  $Q$ as a fixed target measure --- typically arising from observed data --- and explore how the proximal OT divergence
$\mathfrak{D}^{c}_{\varepsilon}(P\|Q)$, can be leveraged to transport a source measure $P$ toward $Q$ by building gradient flows for $\mathfrak{D}^{c}_{\varepsilon}(P\|Q)$. This has been extensively studied in the recent work \cite{Gu_GPA} for the special case of the 1-Wasserstein proximal divergence under the assumption that $Q$ has a finite first moment, and   \cite{chen2024learning} where the authors demonstrate the robustness of the 1-Wasserstein $\alpha$-divergences in generative modeling without requiring any assumptions on $Q$. Finally, we  establish connections with probability flow ODEs. These ODEs provide a deterministic framework for evolving distributions over time and have recently emerged as powerful tools in generative modeling. 
% This perspective draws inspiration from the JKO scheme  which employs the Wasserstein proximal operator of the KL divergence as discussed in Remark \ref{rem: Connection to JKO scheme}. 
% The algorithm is derived from the variational derivative of divergences.

A central step in constructing the gradient flow is the computation of the variational derivative of the divergence, denoted informally as $\frac{\delta  \mathfrak{D}^{c}_{\varepsilon}(P\|Q)}{\delta P}$ given in Theorem \ref{thm:first variation}, not only facilitates the definition of the flow but also sheds light on the regularizing influence of the entropic term. In particular, the entropic regularization ensures the existence and uniqueness of a well-defined variational derivative--something that is generally lacking in the classical optimal transport setting (see, e.g., Section 7.2 of \cite{Santambrogio}).  Conversely, adding the optimal transport term to the KL divergence alters its properties in a meaningful way, given that the KL divergence itself already has a well-defined variational derivative, i.e., 
$\frac{\delta  \mathfrak{D}_{\mathrm{KL}}(P\|Q)}{\delta P} = 1+\log\frac{dP}{dQ}$
provided that $P$ is absolutely continuous to $Q$. Regularizing with the optimal transport,  the variational derivative can be extended to apply to more general, non-absolutely continuous measures $P$.  Let $P$ be a source probability distribution and $Q$ an arbitrary target distribution. At the formal level, a gradient flow on the space of probability measures is obtained as the solution to the following partial differential equation
\begin{equation}
\begin{aligned}
     & \partial_{t}P_t ={\rm div}\left(P_t\nabla\frac{\delta  \mathfrak{D}^{c}_{\varepsilon}(P_t\|Q)}{\delta P_t}\right)  \,, \quad P_0=P\label{eq:transport:variational:pde}
     \end{aligned}
\end{equation} 
As noted in Remark \ref{rem: Connection to JKO scheme}, the gradient flow \eqref{eq:gradient flow JKO} in the JKO scheme shares the same structure as \eqref{eq:transport:variational:pde} and finds  the approximate solution at each time-step by solving \eqref{eq:iterative minimization}. Here, we aim to leverage the formulation in \eqref{eq:transport:variational:pde} to construct generative particle algorithms as discussed next. Recalling the variational derivative of the proximal OT-divergence with the KL divergence given in Theorem \ref{thm:first variation}, i.e.,
\[
\frac{\delta  \mathfrak{D}_{\mathrm{KL},\varepsilon}^{c}(P\|Q)}{\delta P}= \widehat{\phi}^{*} \quad\mathrm{with}\quad\widehat{\phi}^{*}(x)=\inf_{y \in \textrm{supp}(Q)} \left\{ c(x,y)- \psi^*(y) \right\}\,,
\]
and 
\[
(\phi^*,\psi^*)=\textrm{argmax}_{(\phi,\psi)\in\Phi_c}\left\{\mathbb{E}_{P}[\phi] - \varepsilon \log \mathbb{E}_Q[ e^{-\psi/\varepsilon}]\right\}
\]
we construct the gradient flows for proximal OT divergences as follows
\begin{align}\label{eq:fgdivergence:gradflow}
\partial_{t}P_t&={\rm div}\left(P_t\nabla \widehat{\phi}_t^{*} \right)\,,\quad P_0=P ,\nonumber\\
 \widehat{\phi}^*_t(x) &= \inf_{y \in \textrm{supp}(Q)} \left\{ c(x,y)- \psi^*_t(y) \right\}\\
(\phi_t^*,\psi_t^*)&=\textrm{argmax}_{(\phi,\psi)\in\Phi_c}\left\{\mathbb{E}_{P_t}[\phi] - \varepsilon \log \mathbb{E}_Q[ e^{-\psi/\varepsilon}]\right\}\nonumber
\end{align}
In the context of generative models, both the target distribution $Q$ and the generative model $P_t$ in \eqref{eq:transport:variational:pde}
are accessible only through samples and their corresponding empirical distributions. It has been mentioned that $\mathfrak{D}_{\mathrm{KL},\varepsilon}^{c}(P\|Q)$ can directly compare such singular distributions as ensured by the upper bound in \eqref{eq:upper.bound}. We now turn to the Lagrangian formulation of the PDE \eqref{eq:transport:variational:pde}, which corresponds to its underlying ODE/variational representation.
\begin{align}\label{eq:fgdivergenceODE}
\frac{d Y_t}{dt}&=-\nabla \widehat{\phi}_t^{*}(Y_t)\, ,\quad Y_0\sim P_0=P\nonumber\\
 \widehat{\phi}^*_t(x) &= \inf_{y \in \textrm{supp}(Q)} \left\{ c(x,y)- \psi^*_t(y) \right\}\\
(\phi_t^*,\psi_t^*)&=\textrm{argmax}_{(\phi,\psi)\in\Phi_c}\left\{\mathbb{E}_{P_t}[\phi] - \varepsilon \log \mathbb{E}_Q[ e^{-\psi/\varepsilon}]\right\}\nonumber
\end{align}
To construct a particle algorithm based on \eqref{eq:fgdivergenceODE}, we begin with the input data $(X^{(i)})_{i=1}^N$ from the unknown target distribution $Q$ and $(Y^{(j)})_{j=1}^M$ drawn from a source distribution  $P$. The corresponding empirical measures are $\hat{Q}^N$ and $\hat{P}_0^M$. The Euler time discretization of \eqref{eq:fgdivergenceODE} takes the form:
\begin{align}\label{eq:fgdivergenceDiscreteScheme}
Y_{n+1}^{(j)}&=Y_{n}^{(j)}-\Delta t\nabla \widehat{\phi}_n^{*}(Y_n^{(j)})\, ,\quad Y_0^{(j)}=Y^{(j)}\sim P_0,\qquad j=1,\dots, M\nonumber\\
\phi^*_n(Y_n^{(j)}) &= \min_{i=1,\dots,N} \left\{ c(Y_n^{(j)},X^{(i)})- \psi^*_n(X^{(i)}) \right\},\qquad j=1,\dots, M\nonumber \\
(\phi_n^*,\psi_n^*)&=\textrm{argmax}_{(\phi,\psi)\in\Phi_c}\left\{\frac{\sum_{j=1}^M\phi(Y_n^{(i)})}{N} - \varepsilon \log \frac{\sum_{i=1}^N e^{-\psi(X^{(i)})/\varepsilon}}{N}\right\}
\end{align}
where we remind that $\Phi_c = \left\{ (\phi, \psi) \in C_b(\mathbb{R}^d\times \mathbb{R}^d), \phi \oplus \psi \le c \right\}$. The discrete scheme \eqref{eq:fgdivergenceDiscreteScheme} can be interpreted as a generative algorithm that transports the initial source samples $(Y_0^{(j)})_{j=1}^M$ from $P_0$ (typically  a distribution that is easy to sample) over a time horizon $T=n_T\Delta t$,  where $n_T$ is the total number of steps, to  a new set of generated data $(Y_{n_T}^{(j)})_{j=1}^M$ that approximate samples from $Q$. 

 In \cite{Gu_GPA}, the particular case of $\mathfrak{D}_{\mathrm{KL},\varepsilon}^{1}$--the proximal OT divergence with 1-Wasserstein distance--is studied. Consequently, the optimization problem in \eqref{eq:fgdivergenceDiscreteScheme} is formulated over the space of Lipschitz functions. To make this tractable in practice, this function space $\mathrm{Lip_{\frac{1}{\varepsilon}}({\mathbb{R}}^d)}$ is approximated using neural networks $\Gamma^{NN}_{\frac{1}{\varepsilon}}$,  i.e., $\phi_n^*=\textrm{argmax}_{\phi\in \Gamma^{NN}_{\frac{1}{\varepsilon}}}\left\{\frac{\sum_{j=1}^M\phi(Y_n^{(i)})}{N} - \varepsilon \log \frac{\sum_{i=1}^N e^{\phi(X^{(i)})/\varepsilon}}{N}\right\}$
 where the Lipschitz constraint is enforced through spectral normalization techniques \cite{Miyato_spectral_norm}. Moreover, the optimization problem in the discrete scheme \eqref{eq:fgdivergenceDiscreteScheme} for the gradient flow of $\mathfrak{D}_{\mathrm{KL},\varepsilon}^2$ can be  implemented by solving the variational problem \eqref{pOT:ICNN} using input convex neural networks. {\color{black} The gradient particle scheme in \eqref{eq:fgdivergenceDiscreteScheme} can also be augmented with a force-matching step to recover a continuous spatiotemporal vector field that approximates \(-\nabla \widehat{\phi}_n^*\) throughout space and time, not just along particle trajectories. This extension enables interpolation, finer temporal resolution, and makes the method fully generative, allowing simulation from arbitrary initial data; see \cite{Cheng2025PROFET}. Force matching, conceptually related to score matching \eqref{eq:scorematching}, was originally introduced in molecular dynamics to reconstruct interatomic potentials from \emph{ab initio} force data \cite{ErcolessiAdams1994}, and has since found widespread application in coarse-graining of molecular simulations, see for example the recent review \cite{Noid2024}.}

\textbf{Connections to Probability Flows and score-matching:} Next, we explore the connection between gradient flows and probability flow ODEs (i.e., deterministic sampling), \cite{MaoutsaReichOpper}. 
% This Lagrangian formulation has been employed to represent the solution of Fokker-Planck equation: 
Given an initial condition $p_0$, the Fokker-Planck equation 
\[
\partial_t p(t,y) = - {\rm{div}}(f(t,y) p(t,y)) + \frac{1}{2} g^2(t) \Delta p(t,y)
\]
describes the evolution of the marginal density $p_t$ for the random variable $Y_t$ following the forward SDE 
\[
dY_t=f(t,Y_t)dt+g(t) dW_t
\]
where $f(t,\cdot):\mathbb{R}^d\to\mathbb{R}^d$ and $g(t)\in\mathbb{R}$ are the drift and diffusion coefficient respectively, and $W_t$ is a standard Brownian motion. The Laplacian of $p$ can be written as a transport term driven by the score function $\nabla\log p$, i.e., 
\[
\Delta p={\rm{div}}(p\nabla\log p)
\]
and the Fokker-Planck equation is written as  
\begin{equation*}
\frac{\partial p_t(y)}{\partial t}=-{\rm div}\left(p_t (y)v(t,y)\right),\qquad v(t,y)=f(t, y) - \frac{g(t)^2}{2}  \underbrace{\nabla \log p_t(y)}_{\textrm{score function of $p_t$}}
\end{equation*}
The corresponding probability flow ODE is given by 
\begin{equation} \label{eq:probflowODE}
\frac{d Y_t}{dt}=v(t,Y_t),\quad Y_0\sim p_0
\end{equation}
This perspective  also forms the core idea in \cite{Song2021ScoreBasedGM}, which used as a deterministic alternative to the reverse-time stochastic differential equation (SDE), offering a new way to generate samples and compute exact likelihoods.  In fact, the authors first use the forward SDE to gradually add noise to the data, then train a time-dependent score-based model $s_\theta(y)$ by minimizing the Fisher divergence between the model and the data distributions defined as
\begin{equation}\label{eq:scorematching}
\mathbb{E}_{p(y)}\left[\|s_\theta(y)-\nabla \log p_t(y)\|^2_2\right]
\end{equation}
using score matching techniques (e.g., \cite{song2020sliced}), and finally generate samples by solving the corresponding probability flow ODE. \rb{Flow matching~\cite{lipmanflow} and stochastic interpolants~\cite{albergo2023building} are related frameworks that also seek a velocity field of the form in~(\ref{eq:probflowODE}) to match a prescribed path of marginal distributions across time.}

In our gradient flow perspective, starting from the simple case of the classical KL-divergence and computing its variational derivative, reveals a key connection to score-based modeling. Specifically, the gradient of the variational derivative of the KL divergence--equal to the gradient of the discriminator--can be written in terms of the score function: $\nabla\frac{\delta  \mathfrak{D}_{\mathrm{KL}}(p_t\|q)}{\delta p_t}=\nabla \phi^*_t=\nabla\log\frac{p_t}{q}=\nabla\log p_t-\nabla\log q$ and thus
\begin{eqnarray}\label{eq:score_vs_discriminators}
\textrm{score function of $p_t$ }=\nabla\log p_t=\nabla \phi^*_t+\nabla\log q
\end{eqnarray}
% By choosing $g(t)=\sqrt{2}$ and $f(t,y)=\frac{g(t)^2}{2}\nabla\log q(y)$, 
This formulation illustrates how score-based modeling  naturally arises from the  KL-divergence, thereby linking our gradient flow framework to the probability flow ODE. In fact, for specific choices of the functions $g(t)$ and $f(t,y)$, this yields the gradient flow of the KL divergence, which is further generalized by the gradient flow associated with proximal OT divergences in \eqref{eq:transport:variational:pde}. From a computational standpoint, rather than training a score-based model directly, we solve the variational problem defined by \eqref{eq:fgdivergenceDiscreteScheme} using convex optimization techniques, as detailed in Section \ref{sec:algorithms.discrete} and neural networks and discriminators in analogy to GANs, \cite{Gu_GPA}.

In fact, our methods are a type of probability flow --- in the sense that we solve deterministic dynamics for the particles --- but the velocities fields are informed by proximal OT divergences rather than the score \cite{Song2021ScoreBasedGM} or the KL divergence \cite{MaoutsaReichOpper}.  In general, the probability flow corresponding to the gradient flow of any proximal OT divergence can be computed through approximating discriminators $\phi_t^\ast$ without explicitly knowing the distribution $q$. The resulting probability flow does recover the Wasserstein gradient flow corresponding to the usual KL divergence, i.e., \eqref{eq:score_vs_discriminators} reduces to the usual score function.

\bibliographystyle{plain}
\bibliography{references.bib}
%\newpage

% \section{Appendix}
\appendix 
%In the appendix we provide detailed proofs for all the results obtained in the main text.

\section{Proofs of the divergence theorems  in Sections \ref{sec:good.divergence}} 
\label{sec:proofs.good.divergence}

We start with Theorem \ref{thm:proximal} which is proved by 
using the lower semicontinuity of the KL-divergence and transport cost together with the compactness of the level sets of the KL-divergence.

\noindent
{\it Proof of Theorem \ref{thm:proximal}}.  The upper bound 
\eqref{eq:upper.bound} is easily obtained by picking $R=P$ and $R=Q$ and using the divergence property for $T_c(P,Q)$ and $\mathfrak{}P\|Q)$. 

To show the existence of the minimizer, let $R_n$ be a minimizing sequence such that 
\begin{equation}
T_c(P,R_n) + \epsilon \mathfrak{D}(R_n \|Q)
\le \mathfrak{D}^{c}_{\varepsilon}(P\|Q) + \frac{1}{n}
\end{equation}
Since the level sets $\mathfrak{}\cdot\|Q)$ are precompact in the weak topology there exists a convergent subsequence (which we denote again by $R_n$) such that $R_n \to R^*$ weakly. Since both $T_c(P,\cdot)$ and $\mathfrak{D}(\cdot \| Q)$  are lower semicontinuous in the weak topology we have 
\begin{equation}
\begin{aligned}
T_c(P,R^*) + \varepsilon \mathfrak{D}(R^* \|Q) &\le \liminf_{n \to \infty} \left\{T_c(P,R^n) + \varepsilon \mathfrak{D}(R^n \|Q) \right\}
\\
& \le \mathfrak{D}^{c}_{\varepsilon}(P\|Q) 
\end{aligned}
\end{equation}
which shows that $R^*$ is a minimizer.

The uniqueness follows from the strict convexity of $\mathfrak{}\cdot\|Q)$ and the convexity of  $T_c(P,\cdot)$. 
If $R_1^*$ and $R_2^*$ are distinct minimizers then set $R_3^*=\frac12(R_1^*+R_2^*)$.
We have then 
\begin{equation}
\begin{aligned}
&T_c(P,R_3^*) + \varepsilon \mathfrak{D}(R_3^* \|Q) \\
& <  
\frac{1}{2} \left( T_c(P,R_1^*) + \epsilon \mathfrak{D}(R_1^* \|Q) + T_c(P,R_2^*)  + \epsilon \mathfrak{D}(R_2^* \|Q) \right)\\
&= \mathfrak{D}^{c}_{\varepsilon}(P\|Q)\,.
\end{aligned}
\end{equation}
This contradicts the fact that 
$R_1^*$ and $R_2^*$ are minimizers. \qed

\bigskip

\noindent
{\it Proof of Theorem \ref{thm:divergence}}.The divergence property, convexity and lower semiconitnuity are proved as follows:

\noindent{\bf Divergence property:}
For the divergence property  the nonnegativity in  \eqref{eq:divergence} follows from the nonnegativity of  
$T_c(P,R)$ and $\mathfrak{D}(R\|Q)$.   
If the proximal OT-divergence $\mathfrak{D}^{c}_{\varepsilon}(P\|Q)=0$ then using the minimizer $R^*$ provided by Theorem \ref{thm:proximal} we have
\begin{equation}
0 = T_c(P,R^*) + \mathfrak{D}(R^*\|Q)
\end{equation}
and thus $R^*=Q=P$ and thus $P=Q$.

\medskip

\noindent{\bf Convexity:} Consider two pairs or probability measures $P_1,Q_1$ and $P_2,Q_2$ and the corresponding minimizers $R_1^*$ and $R_2^*$.  Note also that the function 
$F(P,Q,R) =T_C(P,R)+\varepsilon \mathfrak{D}(R\|Q)$ is jointly convex. We have then 
\begin{equation}
\begin{aligned}
&\mathfrak{D}^{c}_{\varepsilon}(\alpha P_1 + (1-\alpha)P_2\|\alpha Q_1 +(1-\alpha Q_2) \\
&\le F(\alpha(P_1,Q_1,R_1^*) + (1-\alpha)(P_2,Q_2,R_2^*))\\
& \le \alpha F(P_1,Q_1,R_1^*)   + (1-\alpha)F(P_2,Q_2,R_2^*)
 \\
&= \alpha \mathfrak{D}^{c}_{\varepsilon}(P_1 \| Q_1)  + (1-\alpha)\mathfrak{D}^{c}_{\varepsilon}(P_2\| Q_2)\,
\end{aligned}
\end{equation}
which proves convexity.
\medskip

\noindent {\bf Lower semicontinuity:}
For the lower semicontinuity property
let $(P_n,Q_n) \in  \mathcal{P}(X)\times \mathcal{P}(Y)$  
a weakly convergent sequence and suppose $n_k$ is such that 
\begin{equation}
\liminf_{n} \mathfrak{D}^{c}_{\varepsilon}(P_n\|Q_n) = \lim_{k} \mathfrak{D}^{c}_{\varepsilon}(P_{n_k}\|Q_{n_k})\,. 
\end{equation}
If the limit is infnite there is nothing to prove so we can assume the limit is finite. 
Denoting by $R_{n_k}^*$ the minimizer,  the sequence 
\begin{equation}
\mathfrak{D}^{c}_{\varepsilon}(P_{n_k}\|Q_{n_k})= T_c(P_{n_k},R_{n_k}^*) + \varepsilon \mathfrak{D}(R_{n_k}^*\|Q_{n_k}) 
\end{equation}
is bounded and therefore the sequence $R_{n_k}^*$ has a convergent subsequence
(also denoted by $R_{n_k}^*$) with limit $R$. 
Therefore, using the lower semicontinuity of $T_c$ and $\mathfrak{D}$ in both argument we have
\begin{equation}
\begin{aligned}
\liminf_{n} \mathfrak{D}^{c}_{\varepsilon}(P_n\|Q_n) &= \lim_{k} \mathfrak{D}^{c}_{\varepsilon}(P_{n_k}\|Q_{n_k})\\
&= \lim_{k} T_c(P_{n_k},R_{n_k}^*) + \varepsilon \mathfrak{D}(R_{n_k}^*\|Q_{n_k})  \\ 
&\ge T_c(P,R) + \varepsilon \mathfrak{D}(R\|Q)\\
&\ge  \mathfrak{D}^{c}_{\varepsilon}(P\|Q)
\end{aligned}
\end{equation}
which proves lower semicontinuity and concludes the the proof. \qed

\section{Proof of the interpolation Theorem %\ref{thm:interpolation} 
in Section \ref{sec:interpolation}}
\label{sec:proofs.interpolation}

\noindent
{\it Proof of Theorem \ref{thm:interpolation}} The proof of the limits $\varepsilon \to 0$ and $\varepsilon \to \infty$ go as follows:
\medskip

\noindent {\bf  The limit $\varepsilon \to 0$.} Assume first that $\mathfrak{}P\|Q) <\infty$.
Then we have 
\begin{equation}\label{eq:upper0}
\frac{1}{\varepsilon} \mathfrak{D}^{c}_{\varepsilon}(P\|Q) = \inf_{R \in \mathcal{P}(Y)} \left\{ \mathfrak{D}(R \|Q) + \frac{1}{\varepsilon} T_c(P,R)\right\} \le 
\mathfrak{D}(P\|Q) < \infty \,.
\end{equation}
This implies that 
\begin{equation}
\limsup_{\varepsilon \to 0}
\frac{1}{\varepsilon} \mathfrak{D}^{c}_{\varepsilon}(P\|Q) \le \mathfrak{D}(P\|Q)\,.
\end{equation}
Suppose $\epsilon_n \to 0$ and let $R_n^*$ 
be the corresponding minimizers in \eqref{eq:upper0}. The bound on the KL divergence and \eqref{eq:upper0} implies that the sequence $\{R^*_n\}$ is weakly compact and so passing to a subsequence we can assume that $R^*_n \to R$. By the lower semicontinuity of $T_c$ and \eqref{eq:upper0} we have 
\begin{equation}\label{eq:e01}
T_c(P,R) \le \liminf_{n}\varepsilon_n \frac{1}{\varepsilon_n}T_c(P,R_n^*)  \le \lim_{n} \varepsilon_n \mathfrak{D}(P\|Q) =0
\end{equation}
and therefore $R^*=P$.  Finally, by the lower semicontinuity of $\mathfrak{D}(P\|Q)$
\begin{equation}\label{eq:e02}
\begin{aligned}
\liminf_{n} \frac{1}{\varepsilon_n} \mathfrak{D}^{c}_{\varepsilon_n}(P\|Q)& =
\liminf_{n}\left\{  \mathfrak{D}(R_n^* \|Q) + \frac{1}{\varepsilon_n} T_c(P, R_n^*)\right\} \\
& \ge \liminf_{n} \mathfrak{D}(R_n^* \|Q) \ge \mathfrak{D}(P\|Q)\,.
\end{aligned}
\end{equation}
Since the the sequence $\epsilon_n$ is arbitrary this concludes the proof in the case where $\mathfrak{D}(P\|Q)< \infty$.

Next let us consider the case where  $\mathfrak{D}(P\|Q) =\infty$ and so we need to show that $\liminf_{\varepsilon \to 0} \frac{1}{\varepsilon} \mathfrak{D}^{c}_{\varepsilon}(P\|Q) = \infty$.  By contradiction assume it does not and thus there exists a sequence $\varepsilon_n \to 0$ such that $\lim_{n} \frac{1}{\varepsilon_n} \mathfrak{D}^{c}_{\varepsilon_n}(P\|Q) =D< \infty$. For this sequence we can repeat the argument used in the case of finite $\mathfrak{D}(P\|Q) =\infty$ (see equations \eqref{eq:e01} and \eqref{eq:e02})
to show that, passing to a subsequence if necessary we have 
$\lim_{n} \frac{1}{\varepsilon_n} \mathfrak{D}_{c}^{\varepsilon_n}(P\|Q) \ge \mathfrak{D}(P\|Q)=+\infty$ which is a contradiction. 
\medskip

\noindent{\bf The limit $\epsilon \to \infty$}.
Assume first that $T_c(P,Q)<\infty$.  The argument is very similar to the limit $\varepsilon \to 0$.
We have 
\begin{equation}\label{eq:upper2}
 \mathfrak{D}_{\varepsilon}^c(P\|Q) \le \inf_{R \in \mathcal{P}(Y)} \left\{ T_c(P,R) + \varepsilon \mathfrak{D}(R \|Q) \right\} \le 
T_c(P,Q) < \infty \,.
\end{equation}
This implies immediately that 
$
\limsup_{\varepsilon \to \infty}
\mathfrak{D}^{c}_{\varepsilon}(P\|Q) \le T_c(P,Q)
$.
Arguing as above given any sequence $\varepsilon_n \to \infty$ the minimizers $R_n^*$
in \eqref{eq:upper2} have a convergent subsequence $R_n^* \to R$.  The lower semicontinuity of $\mathfrak{D}(\cdot\|Q)$ implies 
that 
\begin{equation}\label{eq:einfty1}
\mathfrak{D}(R\|Q) \le \liminf_{n}\frac{1}{\varepsilon_n} \varepsilon_n \mathfrak{D}(R_n^*\|Q)) \le \lim_{n} \frac{1}{\varepsilon_n} T_c(P,Q) =0.
\end{equation}
and thus $R=Q$. This implies that 
\begin{equation}\label{eq:einfty2}
\begin{aligned}
\liminf_{n}  \mathfrak{D}^{c}_{\varepsilon_n}(P\|Q)& =
\liminf_{n}\left\{ T_c(P, R_n^*) +\varepsilon_n   \mathfrak{D}(R_n^* \|Q) \right\} \\
& \ge \liminf_{n}  T_c(P, R_n^*) \ge T_c(P,Q)
\end{aligned}
\end{equation}
Since the the sequence $\epsilon_n$ is arbitrary this concludes the proof.  The case $T_c(P,R)=\infty$ is argued similarly as above this concludes the proof. \qed

\section{Proofs of the duality results 
%Theorem \ref{thm:duality.Polish} and \ref{thm:duality.optimizers} 
in Section \ref{sec:duality}}
\label{sec:proofs.duality}

In this section we prove Theorem \ref{thm:duality.Polish}.  We start by proving this Theorem in the special case where $X$ and $Y$ are compact and $c$ is continuous, and we prove it by using similar convex optimization tools as the ones in \cite{Santambrogio} (see Theorem 1.46). 

We start by showing that  problem has a maximizer.  The proof is a slight generalization of the corresponding result in optimal transport and use the notion of $c$-transform which we now recall.  For $\phi:X\to\bar{\mathbb{R}}$, we define
\begin{eqnarray}
\phi^c(y) &=& \inf_{x} c(x,y) - \phi(x) 
\end{eqnarray}
 and for $\psi:Y\to\bar{\mathbb{R}}$, we define
\begin{eqnarray}
\psi^{\overline{c}}(x)&=&\inf_{y} c(x,y) - \psi(y)
\end{eqnarray}
We call a function $\psi(y)$ $c$-concave if $\psi=\phi^c$ for some function $\phi(x)$ (similarly for $\overline{c}$-concavity). The key property of $c$-concave functions on compact sets is that they form an equicontinuous family (see e.g \cite{Santambrogio}, p.12 and the argument below).

\begin{lemma}\label{lem:maxDP}
If $X$ and $Y$ are compact and $c$ is continuous then there exists a solution to the problem 
\begin{equation}\label{eq:dualoptmizer}
\sup_{(\phi, \psi) \in \Phi_c} \left\{ \mathbb{E}_P[\phi] - \varepsilon \ln \mathbb{E}_Q[ e^{-\psi/\varepsilon}] \right\}
\end{equation}
and $\varphi$ is c-concave, $\psi$ is $\overline{c}$-concave and $\psi=\varphi^c$. 
\end{lemma}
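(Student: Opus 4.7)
The plan is to adapt the classical existence argument for Kantorovich potentials in optimal transport to the entropic setting, exploiting compactness and the uniform continuity of $c$ to apply Arzelà--Ascoli.

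Start with any maximizing sequence $(\phi_n,\psi_n)\in\Phi_c$. First I would perform a two-step $c$-transform improvement. Replace $\phi_n$ by $\tilde\phi_n:=\psi_n^{\overline c}$: since $\phi_n(x)\le c(x,y)-\psi_n(y)$ for all $y$, taking the infimum gives $\phi_n\le\tilde\phi_n$ pointwise, hence $\mathbb{E}_P[\tilde\phi_n]\ge \mathbb{E}_P[\phi_n]$, and $(\tilde\phi_n,\psi_n)\in\Phi_c$. Next replace $\psi_n$ by $\tilde\psi_n:=\tilde\phi_n^{\,c}$; the same reasoning gives $\psi_n\le\tilde\psi_n$, and since the map $\psi\mapsto -\varepsilon\log\mathbb{E}_Q[e^{-\psi/\varepsilon}]$ is monotone non-decreasing in $\psi$, the objective only improves. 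Thus we may assume the maximizing sequence already satisfies $\phi_n$ is $\overline c$-concave and $\psi_n=\phi_n^{\,c}$.

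Next I would establish equicontinuity. Because $X\times Y$ is compact and $c$ is continuous, $c$ is uniformly continuous with some modulus $\omega$. For any $\overline c$-concave $\phi(x)=\inf_y\{c(x,y)-\psi(y)\}$, the standard bookkeeping gives
\begin{equation}
|\phi(x)-\phi(x')|\le \sup_y |c(x,y)-c(x',y)|\le \omega(d(x,x')),
\end{equation}
and the analogous estimate holds for $\psi=\phi^{\,c}$. This furnishes uniform equicontinuity of $\{\phi_n\}$ and $\{\psi_n\}$. To get uniform boundedness I would use the invariance $(\phi,\psi)\mapsto(\phi+a,\psi-a)$: the pair stays in $\Phi_c$ and the functional is unchanged because $\mathbb{E}_P[\phi+a]-\varepsilon\log \mathbb{E}_Q[e^{-(\psi-a)/\varepsilon}]=\mathbb{E}_P[\phi]+a-a-\varepsilon\log\mathbb{E}_Q[e^{-\psi/\varepsilon}]$. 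Fixing $x_0\in X$ and normalizing $\phi_n(x_0)=0$, equicontinuity gives $|\phi_n(x)|\le\omega(\mathrm{diam}\,X)$, and then $\psi_n=\phi_n^{\,c}$ is uniformly bounded since $c$ is bounded.

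Finally, by Arzelà--Ascoli on the compact space $X$, a subsequence satisfies $\phi_n\to\phi^*$ uniformly; since the $c$-transform is $1$-Lipschitz with respect to the uniform norm, $\psi_n=\phi_n^{\,c}\to (\phi^*)^c=:\psi^*$ uniformly. The constraint $\phi^*\oplus\psi^*\le c$ passes to uniform limits, so $(\phi^*,\psi^*)\in\Phi_c$. Uniform convergence on the compact spaces gives $\mathbb{E}_P[\phi_n]\to\mathbb{E}_P[\phi^*]$ and, by bounded convergence, $\mathbb{E}_Q[e^{-\psi_n/\varepsilon}]\to\mathbb{E}_Q[e^{-\psi^*/\varepsilon}]$, so the functional is continuous along the subsequence and $(\phi^*,\psi^*)$ attains the supremum. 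By construction, $\phi^*$ is a uniform limit of $\overline c$-concave functions (hence itself $\overline c$-concave, being equal to $(\psi^*)^{\overline c}$ after one further improvement if needed) and $\psi^*=(\phi^*)^{\,c}$, giving the claimed structure. The main obstacle is the monotonicity bookkeeping in step one: one must check that the nonlinear entropic term $-\varepsilon\log\mathbb{E}_Q[e^{-\psi/\varepsilon}]$ responds correctly to the pointwise increase $\psi\le\psi^{\overline c c}$, so that the asymmetry between the linear and nonlinear halves of the objective does not break the reduction to $c$-concave potentials.
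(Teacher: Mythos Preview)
Your proposal is correct and follows essentially the same approach as the paper: both arguments improve a maximizing sequence via the double $c$-transform (exploiting that $\psi\mapsto -\varepsilon\log\mathbb{E}_Q[e^{-\psi/\varepsilon}]$ is monotone), use the modulus of continuity of $c$ for equicontinuity, normalize via the shift invariance $(\phi,\psi)\mapsto(\phi+a,\psi-a)$ for equiboundedness, and conclude with Arzel\`a--Ascoli. Your write-up is in fact slightly more explicit about the monotonicity check and the $1$-Lipschitz continuity of the $c$-transform, but there is no substantive difference.
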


\proof  Consider a pair $(\phi, \psi)$ with $\phi\oplus\psi \le c$, that is $\psi(y) \le c(x,y)-\phi(x)$. Since the map 
$$
\psi  \mapsto  - \varepsilon \ln \mathbb{E}_Q[ e^{-\psi/\varepsilon}] 
$$
is increasing, the optimized functional in \eqref{eq:dualoptmizer}
always increases if $\psi$ increases and so we can always replace in the optimization  a pair $(\varphi, \psi)$ by $(\varphi, \varphi^c)$.  So we can always assume that $\psi$ is $c$-concave.  Similarly we can always replace a pair $(\varphi, \psi)$ by $(\psi^{\overline{c}}, \psi)$ and assume that $\phi$ is $\overline{c}$-concave.

Given a maximizing sequence $(\phi_n, \psi_n)$ in \eqref{eq:dualoptmizer} and applying $c$- and $\overline{c}$-transforms we obtain another maximizing sequence (for which we use the same symbols). Since $X$ and $Y$ are compact the function $c$ is uniformly continuous we denote by $\omega$ be its modulus on continuity (that is  $|c(x,y)-c(x',y')|\le \omega(d(x,x')+d(y,y'))$ for all $x,x'y,y'$).

By definition of the $c$-transform any $c$-concave function $\phi^c$ has
the same modulus of continuity as $c$. This implies that the family $(\phi_n, \psi_n)$ is equicontinuous. 

To get equiboundedness we note that replacing $(\phi_n,\psi_n)$
by $(\phi_n+a_n, \psi_n-a_n)$ for an arbitrary constant $a_n$ does not change the value of the functional in \eqref{eq:dualoptmizer}. By choosing the constants $a_n$ appropriately we can assume that $\phi_n(x)\ge 0$ and thus, by equicontinuity,  we can assume that $0 \le \phi_n \le \omega({\rm diam(X)})$ and since $\psi_n=\phi_n^c$ we get $\psi_n \in [\min(c)- \omega({\rm diam(X)}), \max(c)]$.
This shows that the sequence $(\phi_n,\psi_n)$ can be chosen to be equibounded. By Arzela-Ascoli Theorem there is a convergent subsequence converging uniformly to the  limit $(\phi,\psi)$ which  still satisfies $\phi \oplus \psi\le c$.  By uniform continuity we have 
\begin{equation}
\mathbb{E}_P[\phi_n] -\varepsilon \log \mathbb{E}_Q[e^{-\psi_n/\varepsilon}] 
\rightarrow \mathbb{E}_P[\phi] -\varepsilon \log \mathbb{E}_Q[e^{-\psi/\varepsilon}]
\end{equation}
and thus the supremum is attained in \eqref{eq:dualoptmizer}.
\qed

Let us consider the following map $F: C(X \times Y) \to \mathbb{R}$ given by 
\begin{equation}\label{eq:Fmap.definition}
F(p) = - \max_{(\phi, \psi)\in \Phi_{c-p}} \left\{ \mathbb{E}_P[\varphi] - \varepsilon \ln \mathbb{E}_Q[e^{-\psi/\varepsilon}]\right\}
\end{equation}
denotes a certain class of functions. By Lemma \ref{lem:maxDP}, the map $F$ is well-defined, and in particular, $F(0)$ corresponds to the negative of the right-hand side of \eqref{eq:OTdivergence:dual}.

To establish duality, we will employ convex duality, specifically showing that $F=F^{**}$ (the double Legendre transform). To proceed, we first need the following lemma.

\begin{lemma}\label{lem:Fmap.lsc.convex}
Suppose $X$ and $Y$ are compact. The map $F: C(X \times Y)\to \mathbb{R}$ is convex and lower semi-continuous
and thus by Legendre-Fenchel Theorem we have $F=F^{**}$ 
\end{lemma}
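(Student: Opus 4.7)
The plan is to establish convexity and lower semicontinuity of $F$ separately, and then invoke the Fenchel--Moreau theorem. Throughout, denote the objective by $\mathcal{L}(\phi,\psi) := \mathbb{E}_P[\phi] - \varepsilon \log \mathbb{E}_Q[e^{-\psi/\varepsilon}]$ and set $G := -F$, so that $G(p) = \max_{(\phi,\psi)\in\Phi_{c-p}} \mathcal{L}(\phi,\psi)$.

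For convexity, the key observation is that $\mathcal{L}$ is concave on $C(X)\times C(Y)$: it is linear in $\phi$ and concave in $\psi$, since $\psi\mapsto\log\mathbb{E}_Q[e^{-\psi/\varepsilon}]$ is convex (it is a cumulant generating function composed with the affine map $\psi\mapsto -\psi/\varepsilon$). Given $p_1,p_2\in C(X\times Y)$ with maximizers $(\phi_i,\psi_i)\in\Phi_{c-p_i}$ supplied by Lemma \ref{lem:maxDP}, the constraint $\phi\oplus\psi\le c-p$ is linear in $p$, so $(\lambda\phi_1+(1-\lambda)\phi_2,\,\lambda\psi_1+(1-\lambda)\psi_2)$ is admissible for $\Phi_{c-(\lambda p_1+(1-\lambda)p_2)}$. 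Concavity of $\mathcal{L}$ then gives
\[
-F(\lambda p_1+(1-\lambda)p_2) \ge \lambda\,\mathcal{L}(\phi_1,\psi_1) + (1-\lambda)\,\mathcal{L}(\phi_2,\psi_2) = -\lambda F(p_1) - (1-\lambda)F(p_2),
\]
which is convexity of $F$.

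For lower semicontinuity I would equivalently show that $G$ is upper semicontinuous on $C(X\times Y)$ with the sup norm. Fix $p_n\to p$ uniformly and pass to a subsequence along which $G(p_{n_k})\to\limsup_n G(p_n)$. For each $k$ let $(\phi_{n_k},\psi_{n_k})$ be a maximizer for the cost $c-p_{n_k}$; by Lemma \ref{lem:maxDP} we may assume it is $(c-p_{n_k})$-concave and $\overline{c-p_{n_k}}$-concave, respectively. Since $p_n\to p$ uniformly, the moduli of continuity of the perturbed costs $c-p_{n_k}$ are controlled by a single modulus, which gives an equicontinuous family of maximizers. A constant shift normalizing $\min\phi_{n_k}=0$ leaves $\mathcal{L}$ invariant, and then equicontinuity combined with compactness of $X\times Y$ yields uniform equiboundedness of the family. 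Arzel\`a--Ascoli supplies a further subsequence converging uniformly to some $(\phi^*,\psi^*)$. Passing to the limit in $\phi_{n_k}\oplus\psi_{n_k}\le c-p_{n_k}$ yields $(\phi^*,\psi^*)\in\Phi_{c-p}$, and uniform convergence yields $\mathcal{L}(\phi_{n_k},\psi_{n_k})\to\mathcal{L}(\phi^*,\psi^*)\le G(p)$. Hence $\limsup_n G(p_n)\le G(p)$, i.e.\ $F$ is lower semicontinuous. Finally, $F$ is everywhere finite in this compact setting because the existence of a bounded continuous maximizer from Lemma \ref{lem:maxDP} gives $G(p)\in\mathbb{R}$ for all $p\in C(X\times Y)$, so $F$ is proper; the Fenchel--Moreau theorem on the Banach space $C(X\times Y)$ then yields $F=F^{**}$.

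The main obstacle is the uniform control of the maximizers along $p_n\to p$: one must check that the equicontinuity/equiboundedness proved in Lemma \ref{lem:maxDP} for a single cost $c$ transfers uniformly to the family of perturbed costs $c-p_n$, so that Arzel\`a--Ascoli can be applied to the entire sequence of maximizers. This step depends crucially on the uniform convergence $p_n\to p$ together with the compactness of $X\times Y$.
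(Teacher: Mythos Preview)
Your proposal is correct and follows essentially the same approach as the paper: concavity of $\mathcal{L}$ plus linearity of the constraint for convexity, and an Arzel\`a--Ascoli compactness argument on the $(c-p_n)$-concave maximizers for lower semicontinuity. The paper handles the ``main obstacle'' you flag in exactly the way you suggest, noting that a uniformly convergent sequence $\{p_n\}$ is equicontinuous and equibounded (the converse of Arzel\`a--Ascoli), which supplies the common modulus of continuity for the family of perturbed costs.
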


\begin{proof} For the convexity let  $p_0, p_1 \in C(X\times y)$ let $(\varphi_0, \psi_0)$ and $(\varphi_1, \psi_1)$ the corresponding maximizers (given by Lemma \ref{lem:maxDP}. Let $p_t = (1-t)p_0 + t p_1$ and  set $\varphi_t = (1-t) \varphi_0 + t \varphi_1$ 
and $\psi_t = (1-t) \psi_0 + t \psi_t$. 

The map $(\phi, \psi) \to \mathbb{E}_P[\varphi] - \varepsilon \ln \mathbb{E}_Q[e^{-\psi/\varepsilon}]$ is concave and thus 
\begin{equation}
\mathbb{E}_P[\varphi_t] - \varepsilon \ln \mathbb{E}_Q[e^{-\psi_t/\varepsilon}] \ge -(1-t) F(p_0) -t F(p_1)
\end{equation}
and thus $F$ is convex. 

For the lower semicontinuity suppose $p_k \to p$. Passing to a subsequence if necessary (which we denote by $p_k$ again) we can assume that $\liminf F(p_k)=\lim_{k} F(p_{k})$.  By the converse statement in Arzela-Ascoli theorem the sequence $\{p_{k}\}$ is equibounded and equicontinuous.   We now use the Lemma \ref{lem:maxDP} with cost $c-p_k$ and  obtain maximizers   $(\varphi_{k},\psi_{k})$ which are $c-p_{k}$ and $\overline{c-p_{k}}$ concave.  This implies that $(\varphi_{n_k},\psi_{n_k})$are also equicontinuous and equibounded (using the same arguments as in the proof of 
Lemma \ref{lem:maxDP}. 
By Arzela-Ascoli again, passing to another subsequence, we can assume that $\varphi_{k} \to \varphi$ and $\psi_{k}\to \psi$ uniformly. 
Since $\varphi_{k} \oplus\psi_{k}\le c-p_k$ we have then $\varphi \oplus \psi \le c-p$. So 
\begin{equation}
\begin{aligned}
\liminf F(p_n) &= \lim_{k} F(p_{k})  = - \lim_{k}  \left(\mathbb{E}_P[\varphi_{k}] - \varepsilon \ln \mathbb{E}_Q[e^{-\psi_{k}/\varepsilon}]\right) \\
&=  - \left(\mathbb{E}_P[\varphi] - \varepsilon \ln \mathbb{E}_Q[e^{-\psi/\varepsilon}]\right) \ge F(p)
\end{aligned}
\end{equation}
and thus $F$ is lower semicontinuous. 
\end{proof}

With these two technical lemmas in place, the core argument establishing duality is contained in the following proposition.

\begin{proposition}\label{prop:LTcomputation}
For the map $F(p)$ given in \eqref{eq:Fmap.definition} we have 
\begin{equation}
F^{**}(0) = - \inf_{R \in \mathcal{P}(Y)}\{ W(P,R) +\mathfrak{}R, Q)\}
\end{equation}
\end{proposition}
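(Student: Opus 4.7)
The plan is to compute the Legendre transform $F^{*}$ explicitly and then read off $F^{**}(0)$, thereby identifying the dual of the proximal OT divergence via Lemma \ref{lem:Fmap.lsc.convex} applied at $p=0$. Since $X\times Y$ is compact, the Riesz representation theorem identifies the continuous dual of $C(X\times Y)$ with the space $\mathcal{M}(X\times Y)$ of finite signed Borel measures, so one can write
\begin{equation*}
F^{*}(\mu) \;=\; \sup_{p,\,(\phi,\psi)\in\Phi_{c-p}}\!\left\{\int p\,d\mu \;+\; \mathbb{E}_P[\phi] \;-\; \varepsilon \ln \mathbb{E}_Q[e^{-\psi/\varepsilon}]\right\}.
\end{equation*}

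First, I would exchange the order of the two suprema. For fixed $(\phi,\psi)$, the constraint becomes $p\le c-\phi\oplus\psi$, so maximizing $\int p\,d\mu$ over this half-space forces $F^{*}(\mu)=+\infty$ whenever $\mu$ has a nontrivial negative part. When $\mu\ge 0$, the optimal choice is $p = c - \phi\oplus\psi$ and the expression collapses to
\begin{equation*}
F^{*}(\mu) \;=\; \int c\,d\mu \;+\; \sup_{(\phi,\psi)}\!\left\{ \mathbb{E}_P[\phi] - \!\!\int \phi\,d\mu_X - \!\!\int\psi\,d\mu_Y \;-\;\varepsilon\ln\mathbb{E}_Q[e^{-\psi/\varepsilon}]\right\},
\end{equation*}
where $\mu_X,\mu_Y$ are the marginals of $\mu$. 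The supremum in $\phi$ and $\psi$ decouples. The $\phi$-supremum equals $0$ if $\mu_X=P$ and $+\infty$ otherwise (test with constants and scalar multiples), which forces $\mu_X=P$. Similarly, shifting $\psi$ by a constant shows the $\psi$-supremum is $+\infty$ unless $\mu_Y(Y)=1$; once $\mu_Y\in\mathcal{P}(Y)$, substituting $\psi' = -\psi/\varepsilon$ and invoking the Donsker--Varadhan representation \eqref{eq:donsker.varadhan} identifies the $\psi$-supremum with $\varepsilon\,\mathfrak{D}_{\mathrm{KL}}(\mu_Y\|Q)$.

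Combining these calculations yields
\begin{equation*}
F^{*}(\mu) \;=\; \begin{cases} \int c\,d\mu + \varepsilon\,\mathfrak{D}_{\mathrm{KL}}(\mu_Y\|Q), & \mu\ge 0,\ \mu_X=P,\ \mu_Y\in\mathcal{P}(Y),\\ +\infty, & \text{otherwise,}\end{cases}
\end{equation*}
so $\mu$ is a coupling in $\Pi(P,R)$ for $R:=\mu_Y$. Finally, writing $F^{**}(0) = -\inf_\mu F^{*}(\mu)$ and first minimizing $\int c\,d\mu$ over $\mu\in\Pi(P,R)$ with $R$ fixed produces $T_c(P,R)$, whence
\begin{equation*}
F^{**}(0) \;=\; -\inf_{R\in\mathcal{P}(Y)}\Bigl\{T_c(P,R) + \varepsilon\,\mathfrak{D}_{\mathrm{KL}}(R\|Q)\Bigr\},
\end{equation*}
which is the claim.

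The main technical obstacle is justifying the swap of suprema used to extract $F^{*}(\mu)$ and, more delicately, carrying out the Donsker--Varadhan identification for $\mu_Y$ while still treating it as a general signed measure: one must carefully argue that shifting $\psi$ by constants rules out anything beyond positive $\mu_Y$ with total mass one before applying \eqref{eq:donsker.varadhan}, and that the resulting supremum equals $\varepsilon\,\mathfrak{D}_{\mathrm{KL}}(\mu_Y\|Q)$ even though the formula is stated for bounded continuous test functions (here a density/truncation argument suffices). Everything else is bookkeeping, and Lemma \ref{lem:Fmap.lsc.convex} then concludes the duality via $F(0) = F^{**}(0)$.
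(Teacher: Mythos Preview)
Your proposal is correct and follows essentially the same route as the paper's proof: compute $F^{*}(\mu)$ by merging the suprema over $p$ and $(\phi,\psi)$, eliminate non-positive $\mu$ via the unboundedness of $\int p\,d\mu$ under the one-sided constraint $p\le c-\phi\oplus\psi$, identify the $\phi$-supremum as the marginal constraint $\mu_X=P$ and the $\psi$-supremum via Donsker--Varadhan, and then read off $F^{**}(0)$ by minimizing first over couplings and then over $R$. The technical worries you flag are minor: swapping two suprema is always valid (the paper simply calls it ``a unique sup over $p,\phi,\psi$''), and the Donsker--Varadhan formula \eqref{eq:donsker.varadhan} as stated in the paper already returns $+\infty$ on $\mathcal{M}(Y)\setminus\mathcal{P}(Y)$, so no separate constant-shift argument is needed once $\mu\ge 0$ and $\mu_X=P$ force $\mu_Y$ to be a probability measure.
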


\begin{proof} 
Let us first compute the Legendre-Fenchel  transform of $F^*(\pi)$ of $F$.  
For $\pi \in \mathcal{M}(X \times Y)$ we have 
\begin{equation}
\begin{aligned}
F^*(\pi) & = \sup_{p\in C(X \times Y)} \left\{\int p d\pi - F(p)\right\}  \\
&= \sup_{p \in C(X \times Y)} \left\{ 
\int p d\pi  + \sup_{\varphi \oplus \psi\le c-p} 
\{ 
\mathbb{E}_P[\varphi] - \varepsilon \ln \mathbb{E}_Q[e^{-\psi/\varepsilon}]
\} 
\right\} 
\end{aligned}
\end{equation}
which can be written as a unique sup over $p, \phi, \psi$.  If $\pi \in \mathcal{M}(X \times Y)$ is not a positive Borel measure then these exist $q\le 0$ such that $\int q d\pi >0 $.   If we take then $\varphi=0$, $\psi=0$ and $p = c + nq$ then we find 
$F^*(\pi) \ge \int p d\pi + n \int q d\pi$ and thus $F^*(\pi)=+\infty$.  

If $\pi\in \mathcal{M}(X \times Y)$ is a positive Borel measure and $(\phi,\psi)$ is fixed to take the supremum over $p$  we should take $p$ 
as large as possible, that is $\phi \oplus \psi = c-p$ and thus 
\begin{equation}
\begin{aligned}
&F^*(\pi) = \sup_{\varphi, \psi} 
\left\{ 
\int (c - \varphi \oplus \psi) d\pi   + \mathbb{E}_P[\varphi] -\varepsilon\log \mathbb{E}_Q[e^{-\psi/\varepsilon}]
\right\}
\\
& = \int_{X \times Y} c  d\pi  + \sup_{\varphi} \left\{ \mathbb{E}_P[\varphi] - \int_{X \times Y} \varphi d\pi\right\} 
+ \varepsilon \sup_{\psi} \left\{ \int (-\psi/\varepsilon) d\pi 
- \ln \mathbb{E}_Q[ E^{-\psi/\varepsilon} ]
\right\} 
\end{aligned}
\end{equation}
Denoting by $\pi_X$ and $\pi_Y$ the marginals of $\pi$, we have for the supremum over $\varphi$
\begin{equation}
\sup_{\varphi} \left\{ \mathbb{E}_P[\varphi] - \int_{X \times Y} \varphi d\pi\right\}=\left\{
\begin{array}{cl} 0 & \textrm{if } \pi_X= P \\
\infty & \textrm{ otherwise} \end{array} 
\right.
\end{equation}
For the supremum over $\psi$, the Donsker-Varadhan representation of the KL-divergence gives
\begin{equation}
\sup_{\psi} \left\{ \int (-\psi/\varepsilon) d\pi 
- \ln \mathbb{E}_Q[ E^{-\psi/\varepsilon} ] \right\}= \left\{ 
\begin{array}{cl}  \mathfrak{}\pi_Y\|Q) & \textrm{if }\pi_Y \in \mathcal{P}(Y) \\
+ \infty & \textrm{if }  \pi_Y \in \mathcal{M}(Y) \setminus \mathcal{P}(Y) 
\end{array}
\right. .
\end{equation}
Therefore we find 
\begin{equation}
F^*(\pi) =  \left\{
\begin{array}{cl}
\mathbb{E}_{\pi}[c] + \varepsilon \mathfrak{} \pi_Y \|Q)  & \textrm{if } \pi_X =P \textrm{ and } \pi_Y \in \mathcal{P}(Y) \\
+\infty & \textrm{otherwise}
\end{array}
\right.
\end{equation}
Therefore 
\begin{equation}
\begin{aligned}
F^{**}(0) & =\sup_{\pi} \int 0 d\pi - F^*(\pi) \\
& =-  \inf_{R \in \mathcal{P}(Y)} \inf_{\pi \in \Pi(P,R)} \{ \mathbb{E}_{\pi}[c] + \varepsilon \mathfrak{} \pi_Y \|Q) \} \\
&= - \inf_{R \in \mathcal{P}(Y)}\{ W(P,R) + \varepsilon \mathfrak{D}(R, Q)\}
\end{aligned}
\end{equation}
and this concludes the proof. 
\end{proof}

\begin{proof}[Proof of Theorem \ref{thm:duality.optimizers}]
Let $R^*$ be the (unique) minimizer in \eqref{eq:def:OTdivergence}, 
$\pi^*$ an optimal transport plan between $P$ and $R^*$ and $(\phi^*,\psi^*)$ a maximizer for the \eqref{eq:dualoptmizer}.
We have then 
\begin{equation}
 \mathbb{E}_{\pi^*}[c] + \varepsilon \mathfrak{D}(R^*\|Q) = \mathbb{E}_P[\phi^*] -\varepsilon \ln \mathbb{E}_Q[e^{-\psi^*/\varepsilon}]
\end{equation}
which we can rewrite as 
\begin{equation}
\underbrace{ \mathbb{E}_{\pi^*}[c - \phi^* - \psi^*]
 }_{\ge 0}+ \varepsilon 
 \underbrace{\left\{ \mathfrak{D}(R^*\|Q) -  \mathbb{E}_{R}[-\psi^*/\varepsilon] + \log \mathbb{E}_Q[e^{-\psi^*/\varepsilon}] \right\}
 }_{\ge 0} = 0
\end{equation}
The first term is non-negative by definition of the transport cost and the second by Donsker-Varadhan's formula.  For the second term to be zero both equations \eqref{eq:optimalR*1} 
and \eqref{eq:optimalR*2} to hold and this concludes the proof.  We note further that $\psi^*$ is uniquely determined $Q$ almost surely (up to a constant) and thus, by continuity,  uniquely determined in $\textrm{supp}(Q)$.
From the
the first term in \eqref{eq:proofduality} we see that $\phi\oplus \psi=c$ $\pi^*$ almost surely and thus $\phi^*$ is uniquely determined $P$ almost surely (up to a constant) and also in $\textrm{supp} (P)$. 
This concludes the proof of Theorem \ref{thm:duality.optimizers}. 
\end{proof}

To extend the duality formula from the compact case to the general case we proceed in two steps.  We first relax the compactness assumption on $X$ and $Y$ but  assume that the cost $c$ is bounded and continuous.    

\begin{theorem}\label{thm:duality.bounded} Assume $X$ and $Y$ are Polish space and $c$ is bounded continuous
then we have 
\begin{equation}
\mathfrak{D}^c_\varepsilon(P\|Q) = \sup_{(\phi,\psi)\in \Phi_c} \left\{ \mathbb{E}_P[\phi] - \varepsilon\ln \mathbb{E}_Q[e^{-\psi/\varepsilon}] \right\}
\end{equation}
\end{theorem}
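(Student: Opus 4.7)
The plan is to prove both directions separately. The easier direction, $\mathfrak{D}^c_\varepsilon(P\|Q) \ge \sup_{(\phi,\psi) \in \Phi_c}\{\mathbb{E}_P[\phi] - \varepsilon \log \mathbb{E}_Q[e^{-\psi/\varepsilon}]\}$, will follow directly by combining the primal transport inequality with Donsker-Varadhan. For any $R \in \mathcal{P}(Y)$ and any admissible $(\phi,\psi) \in \Phi_c$, the constraint $\phi \oplus \psi \le c$ gives $T_c(P,R) \ge \mathbb{E}_P[\phi] + \mathbb{E}_R[\psi]$, and applying \eqref{eq:donsker.varadhan} to the test function $-\psi/\varepsilon$ yields $\varepsilon \mathfrak{D}_{\mathrm{KL}}(R\|Q) \ge \mathbb{E}_R[-\psi] - \varepsilon \log \mathbb{E}_Q[e^{-\psi/\varepsilon}]$. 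Adding these and canceling the $\mathbb{E}_R[\psi]$ terms, then taking $\inf_R$ followed by $\sup_{(\phi,\psi)}$, proves the bound.

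For the reverse inequality, I would reduce to the compact case already established in Proposition \ref{prop:LTcomputation} via tightness of Borel probability measures on Polish spaces. Choose increasing compact sets $K_n \subset X$ and $L_n \subset Y$ with $P(K_n), Q(L_n) \ge 1 - 1/n$, and form the normalized restrictions $P_n = P|_{K_n}/P(K_n)$ and $Q_n = Q|_{L_n}/Q(L_n)$, which converge weakly to $P$ and $Q$. Applying Proposition \ref{prop:LTcomputation} on the compact space $K_n \times L_n$ with cost $c|_{K_n \times L_n}$ produces optimizers $(\phi_n^*, \psi_n^*)$ (on the restricted domain) with $\mathfrak{D}^c_\varepsilon(P_n\|Q_n) = \mathbb{E}_{P_n}[\phi_n^*] - \varepsilon \log \mathbb{E}_{Q_n}[e^{-\psi_n^*/\varepsilon}]$. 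By Lemma \ref{lem:maxDP} these may be taken $c$- and $\bar c$-concave, so after a harmless constant shift they are uniformly equibounded in $n$ by a constant depending only on $\|c\|_\infty$.

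Next, extend $(\phi_n^*, \psi_n^*)$ to a pair $(\tilde\phi_n, \tilde\psi_n) \in \Phi_c$ of bounded continuous admissible potentials on $X \times Y$ agreeing with the compact optimizers on $K_n$ and $L_n$ and remaining uniformly bounded in $n$. The natural candidate is the $c$-transform over the compact set, $\tilde\phi_n(x) = \inf_{y \in L_n}\{c(x,y) - \psi_n^*(y)\}$, which is bounded continuous on $X$, equibounded in $n$, and coincides with $\phi_n^*$ on $K_n$ by $\bar c$-concavity, followed by a suitably truncated $\bar c$-transform to define $\tilde\psi_n$ on $Y$. Once $(\tilde\phi_n, \tilde\psi_n) \in \Phi_c$ is in hand, the uniform boundedness together with $P(K_n^c), Q(L_n^c) \le 1/n$ delivers $\mathbb{E}_P[\tilde\phi_n] = \mathbb{E}_{P_n}[\phi_n^*] + o(1)$ and $\mathbb{E}_Q[e^{-\tilde\psi_n/\varepsilon}] = \mathbb{E}_{Q_n}[e^{-\psi_n^*/\varepsilon}] + o(1)$. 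Since the expression inside the log is bounded away from zero uniformly, these combine into
$$\sup_{(\phi,\psi) \in \Phi_c}\{\mathbb{E}_P[\phi] - \varepsilon \log \mathbb{E}_Q[e^{-\psi/\varepsilon}]\} \ge \mathfrak{D}^c_\varepsilon(P_n\|Q_n) + o(1).$$
Passing to $\liminf_n$ and invoking the weak lower semicontinuity of $\mathfrak{D}^c_\varepsilon$ from Theorem \ref{thm:divergence} then closes the inequality.

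The main technical obstacle will be the extension step, and specifically securing continuity of $\tilde\psi_n$ on all of $Y$. A pure $\bar c$-transform over all of $X$ preserves the global admissibility constraint but is only upper semicontinuous when $c$ is merely bounded continuous on a Polish space, since the infimum of continuous functions over a non-compact set need not be continuous and approximate minimizers can escape to infinity. The remedy I would use is to modify $\tilde\phi_n$ by a Urysohn-type coercive cutoff (equal to zero on $K_n$ and growing outside) so that the $\bar c$-inf is always attained in a compact set and hence continuous, while retaining the uniform bound and the matching property on $L_n$; alternatively, one can preliminarily approximate $c$ by uniformly continuous bounded costs via inf-convolution with a Lipschitz kernel and pass to the limit using joint lower semicontinuity. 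Tracking these approximations cleanly, so that uniform bounds and continuity of the extended potentials coexist with the global admissibility $\tilde\phi_n \oplus \tilde\psi_n \le c$, is the delicate portion of the argument.
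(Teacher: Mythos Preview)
Your proposal is correct and follows essentially the same approach as the paper: establish the easy inequality via Donsker--Varadhan plus the transport constraint, then for the reverse direction reduce to the compact case by tightness, extend the compact optimizers to all of $X$ and $Y$ via $c$-transforms, exploit the uniform bound $\|\widetilde\phi\|_\infty,\|\widetilde\psi\|_\infty \le \|c\|_\infty$ to control the error from $P_n,Q_n$ versus $P,Q$, and close with the lower semicontinuity of $\mathfrak{D}^c_\varepsilon$. You are actually more careful than the paper about the continuity of the second extended potential $\widetilde\psi_n$ (an infimum over the non-compact $X$), which the paper's proof glosses over; your proposed remedies via a coercive cutoff or a preliminary uniformly continuous approximation of $c$ are both sound ways to repair this gap.
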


\begin{proof}  Recall that by duality for optimal transport and the KL-divergence we 
have for any $R \in \mathcal{P}(Y)$
\begin{equation}
\begin{aligned}
\mathbb{E}_P[\phi] -\varepsilon \ln \mathbb{E}_Q[e^{-\psi/\varepsilon}] &=   \mathbb{E}_P[\phi] +\mathbb{E}_R[\psi] + \varepsilon \left(\mathbb{E}_R[-\psi/\varepsilon] -\ln \mathbb{E}_Q[e^{-\psi/\varepsilon}]\right) \\
&\le T_c(P,R) + \mathfrak{D}(R\|Q)\\
\end{aligned}
\end{equation}
and thus 
\begin{equation}
\mathfrak{D}^c_\varepsilon(P\|Q) \ge  \sup_{(\phi,\psi)\in \Phi_c} \left\{ \mathbb{E}_P[\phi] - \varepsilon\ln \mathbb{E}_Q[e^{-\psi/\varepsilon}] \right\}.
\end{equation}

We show the reverse inequality. Since $X$ and $Y$ are Polish spaces, we pick a sequence $\delta_n \to 0$ and compact sets $X_n,Y_n$ such that $P(X\setminus X_n) \le \delta_n$ and $Q(Y\setminus Y_n) \le \delta_n$.  We denote by $P_n$ and $Q_n$ the restriction of $P$ to $X_n$ (i.e., $P_n(A)=P(A \cap X_n)/P(X_n)$). It is easy to verify that $P_n$
and $Q_n$  converge weakly to $P$ and $Q$ respectively. We also denote $c_n$ the restriction of $c$ to $X_n\times Y_n$ 
and we have then 
\begin{equation}
\mathfrak{D}^c_\varepsilon(P_n\|Q_n) = \inf_{R \in \mathcal{P}(Y_n)} \left\{ T_{c_n}(P_n,R) + \varepsilon \mathfrak{D}(R\|Q_n)\right\} = \mathfrak{D}^{c_n}_\varepsilon(P_n\|Q_n)
\end{equation}
and we can use the results obtained in the compact case.

Using the lower semicontinuity of  $\mathfrak{D}^c_\varepsilon(P\|Q)$ given an arbitrary $\alpha >0$ we can pick $N$ such that 
\begin{equation}\label{eq:tt23}
\begin{aligned}
\mathfrak{D}^c_\varepsilon(P\|Q)  &\le \mathfrak{D}^{c_N}_\varepsilon(P_N\|Q_N) + \alpha  \\
& = \sup_{(\phi,\psi)\le \Phi_{c_N}}
\left\{ \mathbb{E}_{P_N}[\phi] - \varepsilon\ln \mathbb{E}_{Q_N}[e^{-\psi/\varepsilon}] \right\} + \alpha\\
& = \mathbb{E}_{P_N}[\phi_N^*] - \varepsilon\ln \mathbb{E}_{Q_N}[e^{-\psi_N^*/\varepsilon}] + \alpha
\end{aligned}
\end{equation}
where we have used the duality for the compact case  and the existence of a maximizers (Lemma \ref{lem:maxDP}).  Recall that $\phi_N^*(x)+\psi_N^*(y)=c(x,y)$ is $x \in \textrm{supp}(P_N)$ and $y \in \textrm{supp}(Q_N)$.

Next (using the standard $c$-transform trick in optimal transport) we extend the pair $(\phi_N^*(x),\psi_N^*(y))$ to  $X$ and $Y$ by setting 
\begin{equation}\label{eq:phiNext}
\begin{aligned}
\widetilde{\phi}(x) &= \inf_{y \in X_N} \left\{ c(x,y) - \psi_N^*(y) \right\} \\
\widetilde{\psi}(y) &=\inf_{y \in Y} \left\{ c(x,y) - \widetilde{\phi}(y) \right\}
\end{aligned}
\end{equation}
This definition ensures that $\phi(x)=\phi_N(x)$ on $\textrm{supp}(P_N)$
and $\psi(x)=\psi_N(y)$ on $\textrm{supp}(Q_N)$ and moreover $\phi(x) + \psi(y)\le c(x,y)$ for all $x,y$.  

The fact that $c$ is bounded provides a bound on $\phi$ and $\psi$.  Without loss of generality we can assume that $c\ge 0$
and we pick $x_0,y_0$ such that $\phi_N(x_0)+\psi_N(x_0)=c(x_0,y_0)\ge 0$.  The pair $\phi_N,\psi_N$ is determined up to a constant which we can choose so that $\phi_N(x_0)\ge 0$ and $\psi_N(x_0)\ge 0$. 
From \eqref{eq:phiNext} we get first
\begin{equation}\label{eq:boundbyc1}
\phi(x) \le c(x,y_0), \quad \psi(y) \le c(x_0,y) 
\end{equation}
and then 
\begin{equation}\label{eq:boundbyc2}
\phi(x) \ge \inf_{y} \{c(x,y) - c(x_0,y)\}\,,  \quad \psi(x) \ge  \inf_{x} c(x,y) - c(x,y_0) \,.  
\end{equation}
In particular we have $\|\phi\|_\infty \le \|c\|_\infty$ and $\|\psi\|_\infty \le \|c\|_\infty$.  Since $P_N$ and $Q_N$ converge weakly to $P$ and $Q$ by choosing $N$ large enough we have then form \eqref{eq:tt23}  
\begin{equation}
\begin{aligned}
\mathfrak{D}^c_\varepsilon(P\|Q)  &\le \mathbb{E}_{P}[\widetilde{\phi}] - \varepsilon\ln \mathbb{E}_{Q}[e^{-\widetilde{\psi}/\varepsilon}] + 2 \alpha \\
&\le \sup_{(\phi,\psi)\in \Phi_c}\left\{ \mathbb{E}_{P}[\widetilde{\phi}] - \varepsilon\ln \mathbb{E}_{Q}[e^{-\widetilde{\psi}/\varepsilon}]\right\}  + 2 \alpha 
\end{aligned}
\end{equation} 
Since $\alpha$ is arbitrary this concludes the proof of the theorem.
\end{proof}

Finally, we apply an additional approximation argument to handle general lower semi-continuous cost functions.

\begin{theorem}\label{thm:duality.general}
Assume that $X$ and $Y$ are Polish spaces and that $c$ is bounded below and lower semicontinuous and that $c \le a \oplus b$ with $a \in L^1(P)$ and $b \in L^1(Q)$.  Then duality holds 
\begin{equation}
\mathfrak{D}^c_\varepsilon(P\|Q) = \sup_{(\phi,\psi)\in \Phi_c} \left\{ \mathbb{E}_P[\phi] - \varepsilon\ln \mathbb{E}_Q[e^{-\psi/\varepsilon}] \right\}
\end{equation}
\end{theorem}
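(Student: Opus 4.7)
The plan is to first establish weak duality (the easy direction), and then extend strong duality from the bounded continuous case (Theorem \ref{thm:duality.bounded}) to the general lower semicontinuous case via a monotone approximation of the cost $c$ from below by bounded continuous cost functions.

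\emph{Step 1 (Weak duality).} Exactly as at the beginning of the proof of Theorem \ref{thm:duality.bounded}, for any $R\in\mathcal{P}(Y)$ and any $(\phi,\psi)\in\Phi_c$, the duality of transport \eqref{eq:transport:duality} and the Gibbs variational principle \eqref{eq:gibbs} give
\[
\mathbb{E}_P[\phi]-\varepsilon\ln\mathbb{E}_Q[e^{-\psi/\varepsilon}] \le T_c(P,R)+\varepsilon\mathfrak{D}_{\mathrm{KL}}(R\|Q).
\]
Taking the infimum over $R$ and then the supremum over $(\phi,\psi)\in\Phi_c$ yields $\sup_{\Phi_c}\{\cdots\}\le\mathfrak{D}^c_\varepsilon(P\|Q)$. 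Note also that under the hypothesis $c\le a\oplus b$ with $a\in L^1(P)$, $b\in L^1(Q)$, Theorem \ref{thm:proximal} gives $\mathfrak{D}^c_\varepsilon(P\|Q)\le T_c(P,Q)\le \mathbb{E}_P[a]+\mathbb{E}_Q[b]<\infty$, so both sides are finite.

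\emph{Step 2 (Monotone approximation of the cost).} Using the Moreau--Yosida construction, set $c_n(x,y)=\min\bigl\{n,\,\inf_{(x',y')}\{c(x',y')+n\,d_{X\times Y}((x,y),(x',y'))\}\bigr\}$. Each $c_n$ is bounded (below by $\inf c>-\infty$ and above by $n$) and continuous (as the minimum of two continuous functions), and by the lower semicontinuity of $c$, $c_n\nearrow c$ pointwise. Theorem \ref{thm:duality.bounded} therefore applies to each $c_n$, yielding
\[
\mathfrak{D}^{c_n}_\varepsilon(P\|Q)=\sup_{(\phi,\psi)\in\Phi_{c_n}}\left\{\mathbb{E}_P[\phi]-\varepsilon\ln\mathbb{E}_Q[e^{-\psi/\varepsilon}]\right\}.
\]
Since $c_n\le c$, we have $\Phi_{c_n}\subset\Phi_c$, so the right-hand side is bounded above by $\sup_{\Phi_c}\{\cdots\}$.

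\emph{Step 3 (Primal convergence).} The core technical step is to show $\mathfrak{D}^{c_n}_\varepsilon(P\|Q)\nearrow\mathfrak{D}^c_\varepsilon(P\|Q)$. The upper bound $\mathfrak{D}^{c_n}_\varepsilon\le\mathfrak{D}^c_\varepsilon$ is trivial from $c_n\le c$. For the matching lower bound, let $R_n^*$ denote the minimizer of $\mathfrak{D}^{c_n}_\varepsilon(P\|Q)$ (Theorem \ref{thm:proximal}). The bound $\varepsilon\mathfrak{D}_{\mathrm{KL}}(R_n^*\|Q)\le\mathfrak{D}^c_\varepsilon(P\|Q)<\infty$ together with the precompactness of KL-level sets (Assumption A-D) gives a weakly convergent subsequence $R_{n_k}^*\to R^*$. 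For any fixed $m$ and $n_k\ge m$, monotonicity gives $T_{c_{n_k}}(P,R_{n_k}^*)\ge T_{c_m}(P,R_{n_k}^*)$, and lower semicontinuity of $T_{c_m}$ (valid since $c_m$ is bounded continuous) yields $\liminf_k T_{c_{n_k}}(P,R_{n_k}^*)\ge T_{c_m}(P,R^*)$. Combined with $\liminf_k\mathfrak{D}_{\mathrm{KL}}(R_{n_k}^*\|Q)\ge\mathfrak{D}_{\mathrm{KL}}(R^*\|Q)$, this gives, for every $m$,
\[
\liminf_k \mathfrak{D}^{c_{n_k}}_\varepsilon(P\|Q)\ \ge\ T_{c_m}(P,R^*)+\varepsilon\mathfrak{D}_{\mathrm{KL}}(R^*\|Q).
\]
Finally, sending $m\to\infty$ and invoking $T_{c_m}(P,R^*)\nearrow T_c(P,R^*)$ — which follows from a standard tightness plus lower-semicontinuity argument on optimal couplings $\pi_m^*\in\Pi(P,R^*)$ combined with monotone convergence for $E_{\pi^*}[c_m]\nearrow E_{\pi^*}[c]$ — gives $\liminf_k \mathfrak{D}^{c_{n_k}}_\varepsilon(P\|Q)\ge T_c(P,R^*)+\varepsilon\mathfrak{D}_{\mathrm{KL}}(R^*\|Q)\ge\mathfrak{D}^c_\varepsilon(P\|Q)$.

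\emph{Step 4 (Conclusion).} Chaining everything,
\[
\mathfrak{D}^c_\varepsilon(P\|Q)=\lim_{n}\mathfrak{D}^{c_n}_\varepsilon(P\|Q)=\lim_n\sup_{\Phi_{c_n}}\{\cdots\}\ \le\ \sup_{\Phi_c}\{\cdots\}\ \le\ \mathfrak{D}^c_\varepsilon(P\|Q),
\]
forcing equalities throughout.

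\emph{Main obstacle.} The single delicate point is the double-indexed passage to the limit in Step 3: one must decouple the $n$ running in the cost $c_n$ from the $n$ running in the minimizer $R_n^*$, and then send both to infinity in the correct order. This is accomplished by freezing $m$, using the lower semicontinuity of the fixed bounded continuous cost $T_{c_m}$ in its second argument, and only then letting $m\to\infty$ via the monotone convergence of $T_{c_m}(P,R^*)$ to $T_c(P,R^*)$. The integrability bound $c\le a\oplus b$ is what guarantees finiteness of all quantities and permits the dominated/monotone convergence arguments for the coupling integrals.
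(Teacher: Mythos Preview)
Your proposal is correct and follows essentially the same strategy as the paper: weak duality first, then monotone approximation $c_n\nearrow c$ by bounded continuous costs, invoke Theorem \ref{thm:duality.bounded} for each $c_n$, and finish by showing $\mathfrak{D}^{c_n}_\varepsilon(P\|Q)\to\mathfrak{D}^c_\varepsilon(P\|Q)$ via tightness of the minimizers and the freeze-$m$-then-let-$m\to\infty$ trick. The only cosmetic difference is that the paper extracts a limit of the optimal \emph{couplings} $\pi_n\in\Pi(P,R_n^*)$ directly and applies monotone convergence to $\int c_m\,d\pi^*$, whereas you first pass to the limit in $R_n^*$ and then invoke the separate (but equivalent) fact $T_{c_m}(P,R^*)\nearrow T_c(P,R^*)$; the paper's route saves one tightness argument.
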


\begin{proof} The condition 
$c \le a \oplus b$ with $a \in L^1(P)$ implies that $\mathfrak{D}^c_\varepsilon(P\|Q)$ is finite since 
$$ 
\mathfrak{D}^c_\varepsilon(P\|Q) \le T_c(P,Q)
\le \inf_{\pi \in \Pi(P,Q)} \int c d\pi \le \mathbb{E}_P[a] + \mathbb{E}_Q[b] <\infty .
$$

We can write the lower semicontinuous $c$ as an increasing limit of nonnegative continuous function $c_n$ and by replacing $c_n$ by $\max\{c_n,n\}$ we can assume that the functions $c_n$ are bounded and continuous.

We already know that
\begin{equation}
\mathfrak{D}^{c}_\varepsilon(P\|Q) \ge \sup_{(\phi,\psi)\in \Phi_{c}} \left\{ \mathbb{E}_P[\phi] - \varepsilon\ln \mathbb{E}_Q[e^{-\psi/\varepsilon}] \right\} .
\end{equation}

By theorem \ref{thm:duality.bounded}
and because $c_n$ is increasing to $c$ we have 
\begin{equation}
\begin{aligned}
\mathfrak{D}^{c_n}_\varepsilon(P\|Q)&=
\sup_{(\phi,\psi)\in \Phi_{c_n}} \left\{ \mathbb{E}_P[\phi] - \varepsilon\ln \mathbb{E}_Q[e^{-\psi/\varepsilon}] \right\}
\\
&\le \sup_{(\phi,\psi)\in \Phi_{c}} \left\{ \mathbb{E}_P[\phi] - \varepsilon\ln \mathbb{E}_Q[e^{-\psi/\varepsilon}] \right\},
\end{aligned}
\end{equation}
and therefore it is enough to show that 
\begin{equation}
\lim_{n \to \infty}\mathfrak{D}^{c_n}_\varepsilon(P\|Q)\ge \mathfrak{D}^{c}_\varepsilon(P\|Q) \,.
\end{equation}
Since $c_n$ is increasing then the sequence $\mathfrak{D}^{c_n}_\varepsilon(P\|Q)$ is increasing in $n$ and so the limit exists.  We have, using the optimal measure $R_n$ and an optimal transport plan $\pi_n \in \Pi(P,R_n)$  
\begin{equation}
\mathfrak{D}^{c_n}_\varepsilon(P\|Q) = \int c_n d\pi_n + \varepsilon \mathfrak{D}_{\textrm{KL}}(R_n \|Q)\,.
\end{equation}
We have 
\begin{equation}
\varepsilon \mathfrak{D}_{\textrm{KL}}(R_n \|Q) \le \mathfrak{D}^{c_n}_\varepsilon(P\|Q)  \le \mathfrak{D}^{c}_\varepsilon(P\|Q) \,.
\end{equation}
and therefore the sequence $\{R_n\}$ is tight as well.  This implies that the sequence of optimal transport plans $\{\pi_n\}$ is tight and therefore 
there exist accumulation points $\pi^*,R^*$ with $\pi^* \in \Pi(P,R^*)$.  Since $c_n$
is increasing we have for $n\ge m  $
\begin{equation}
\begin{aligned}
\mathfrak{D}^{c_n}_\varepsilon(P\|Q) \ge \int c_m d\pi_n + \varepsilon \mathfrak{D}_{\textrm{KL}}(R_n \|Q)
\end{aligned}
\end{equation}
and thus for any $m$
\begin{equation}
\begin{aligned}
\lim_{n \to \infty} \mathfrak{D}^{c_n}_\varepsilon(P\|Q) &\ge \limsup_{n \to \infty} \int c_m d\pi_n + \varepsilon \mathfrak{D}_{\textrm{KL}}(R_n \|Q) \\
&\ge \int c_m d\pi^* + \varepsilon \mathfrak{D}_{\textrm{KL}}(R^* \|Q)
\end{aligned}
\end{equation}
By monotone convergence $\lim_{m\to \infty}\int c_m d\pi^* = \int c d\pi^*$ and thus 
\begin{equation}
\lim_{n \to \infty} \mathfrak{D}^{c_n}_\varepsilon(P\|Q) \ge \int c d\pi^* +  \varepsilon \mathfrak{D}_{\textrm{KL}}(R^* \|Q) \ge \mathfrak{D}^{\varepsilon}_c(P\|Q)\,.
\end{equation}
This concludes the proof.

\end{proof}

Finally we prove the existence of optimizers in the general case.  Since $c$ is only lower semi-continuous we will now take $\phi \in L^1(P)$ and $\psi \in L^1(Q)$ instead of bounded and continuous. 
We will use a weak convergence argument in $L^1$, due to  \cite{rachev1998mass} (see also  \cite{Villani2003topics}) and which me modify accordingly.

\begin{theorem}\label{thm:maximizer.generalcase}
Assume that $X$ and $Y$ are Polish spaces and that $c$ is bounded below and lower semicontinuous and that $c \le a \oplus b$ with $a \in L^1(P)$ and $b \in L^1(Q)$.  Then the dual problem 
\begin{equation}
\mathfrak{D}^c_\varepsilon(P\|Q) = \sup_{ \phi\in L^1(P),  \psi\in L^1(Q) \atop \phi \oplus \psi \le c} \left\{ \mathbb{E}_P[\phi] - \varepsilon\ln \mathbb{E}_Q[e^{-\psi/\varepsilon}] \right\}
\end{equation}
possesses a maximizer $(\phi^*, \psi^*)$.
\end{theorem}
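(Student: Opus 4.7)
The plan is to construct a maximizer via a $c$-transform + normalization + weak-compactness argument, in the spirit of Rachev--Rüschendorf and Villani. Let $J(\phi,\psi):=\mathbb{E}_P[\phi]-\varepsilon\ln\mathbb{E}_Q[e^{-\psi/\varepsilon}]$. Take a maximizing sequence $(\phi_n,\psi_n)$ with $\phi_n\in L^1(P)$, $\psi_n\in L^1(Q)$, $\phi_n\oplus\psi_n\le c$, and $J(\phi_n,\psi_n)\to \mathfrak{D}^c_\varepsilon(P\|Q)$, which is finite by Theorem \ref{thm:duality.general} and the hypothesis on $c$. The first step is to improve each pair to a $c$-concave one: since $\psi\mapsto -\varepsilon\ln\mathbb{E}_Q[e^{-\psi/\varepsilon}]$ is monotone non-decreasing pointwise, and since $\phi_n\oplus\psi_n\le c$ implies $\psi_n(y)\le \phi_n^c(y)$, I may replace $\psi_n$ by $\phi_n^c$ without decreasing $J$; symmetrically I replace $\phi_n$ by $(\phi_n^c)^{\bar c}\ge \phi_n$. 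After this the sequence is still maximizing and the pairs are $c$-concave.

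Next, fix a reference point $(x_0,y_0)$ and normalize each pair by the shift $(\phi_n,\psi_n)\mapsto (\phi_n+r_n,\psi_n-r_n)$, which leaves $J$ invariant, so that $\psi_n(y_0)=0$. The domination hypothesis $c(x,y)\le a(x)+b(y)$ with $a\in L^1(P)$, $b\in L^1(Q)$ then yields pointwise envelopes
\begin{equation}
\phi_n(x)\le c(x,y_0)-\psi_n(y_0)\le a(x)+b(y_0),\qquad \psi_n(y)\le c(x_0,y)-\phi_n(x_0),
\end{equation}
while the $c$-concave structure $\phi_n(x)=\inf_y[c(x,y)-\psi_n(y)]$ (and likewise for $\psi_n$) converts the one-sided bounds into two-sided ones dominated by $L^1$ functions under $P$ and $Q$, respectively. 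In particular $\{\phi_n\}$ is uniformly integrable in $L^1(P)$ and $\{\psi_n\}$ in $L^1(Q)$.

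The compactness step exploits the concavity of $J$ and the linearity of the constraint $\phi\oplus\psi\le c$. Cesàro / Komlós-type extraction gives a subsequence along which the averages $\bar\phi_N=\tfrac{1}{N}\sum_{k=1}^N \phi_{n_k}$ converge $P$-a.s.\ to a function $\phi^*\in L^1(P)$ and $\bar\psi_N\to \psi^*\in L^1(Q)$ $Q$-a.s. Convexity of the feasible set preserves $\bar\phi_N\oplus\bar\psi_N\le c$, so $\phi^*\oplus\psi^*\le c$ on $\mathrm{supp}(P)\times \mathrm{supp}(Q)$; a final $c$-transform extends the limit to all of $X\times Y$ with the inequality preserved. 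By Jensen/concavity,
\begin{equation}
J(\bar\phi_N,\bar\psi_N)\ge \tfrac{1}{N}\sum_{k=1}^N J(\phi_{n_k},\psi_{n_k})\longrightarrow \mathfrak{D}^c_\varepsilon(P\|Q).
\end{equation}

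The main obstacle is passing to the limit inside the nonlinear term $-\varepsilon\ln\mathbb{E}_Q[e^{-\psi/\varepsilon}]$. The linear term handles itself: $\mathbb{E}_P[\bar\phi_N]\to \mathbb{E}_P[\phi^*]$ by dominated convergence using the $L^1(P)$-envelope obtained above. For the log-exp term, the upper bound on $\psi_n$ yields $e^{-\bar\psi_N/\varepsilon}\le e^{(\phi_n(x_0)-c(x_0,\cdot))/\varepsilon}$, which is $Q$-integrable, while the lower bound prevents the exponential from degenerating; dominated convergence then gives $\mathbb{E}_Q[e^{-\bar\psi_N/\varepsilon}]\to \mathbb{E}_Q[e^{-\psi^*/\varepsilon}]$ (with the limit strictly positive, so the logarithm passes through). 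Combining, $J(\phi^*,\psi^*)\ge \mathfrak{D}^c_\varepsilon(P\|Q)$, so $(\phi^*,\psi^*)$ is a maximizer, and the uniqueness assertion from Theorem \ref{thm:duality.optimizers}(c) follows from strict concavity of $-\varepsilon\ln\mathbb{E}_Q[e^{-\psi/\varepsilon}]$ in $\psi$ and the exposed-face argument in \eqref{eq:proofduality}.
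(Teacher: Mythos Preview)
Your Ces\`aro/Komlós averaging route is a legitimate alternative to the paper's proof, but as written it has a genuine gap: the claimed \emph{two-sided} $L^1$ envelopes on $\phi_n$ and $\psi_n$ do not follow from the $c$-transform together with $c\le a\oplus b$. From $\phi_n=\psi_n^{\bar c}$ and the upper bound $\psi_n\le \tilde b$ you only get
\[
\phi_n(x)\ \ge\ \inf_{y}\bigl[c(x,y)-\tilde b(y)\bigr],
\]
and since $\tilde b\in L^1(Q)$ is in general unbounded above, this infimum can be $-\infty$; there is no $L^1(P)$ lower envelope. Hence neither uniform integrability nor dominated convergence is available for $\mathbb{E}_P[\bar\phi_N]$. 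The same problem hits your exponential step, and in fact your inequality there is reversed: the upper bound $\psi_n(y)\le c(x_0,y)-\phi_n(x_0)$ yields the \emph{lower} bound $e^{-\psi_n/\varepsilon}\ge e^{(\phi_n(x_0)-c(x_0,\cdot))/\varepsilon}$, not the upper bound you wrote, so it cannot serve as a dominating function.

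The approach is salvageable if you replace dominated convergence by Fatou, which happens to point the right way for both terms. From $\bar\phi_N\le \tilde a$ apply Fatou to $\tilde a-\bar\phi_N\ge 0$ to get $\mathbb{E}_P[\phi^*]\ge\limsup_N\mathbb{E}_P[\bar\phi_N]$; apply Fatou directly to $e^{-\bar\psi_N/\varepsilon}\ge 0$ to get $\mathbb{E}_Q[e^{-\psi^*/\varepsilon}]\le\liminf_N\mathbb{E}_Q[e^{-\bar\psi_N/\varepsilon}]$, hence $-\varepsilon\ln\mathbb{E}_Q[e^{-\psi^*/\varepsilon}]\ge\limsup_N\bigl(-\varepsilon\ln\mathbb{E}_Q[e^{-\bar\psi_N/\varepsilon}]\bigr)$. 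Summing gives $J(\phi^*,\psi^*)\ge\limsup_N J(\bar\phi_N,\bar\psi_N)\ge\mathfrak{D}^c_\varepsilon(P\|Q)$, which is what you need. The paper avoids this subtlety altogether: it writes $f_n=\tilde a-\phi_n\ge 0$, $g_n=\tilde b-\psi_n\ge 0$, truncates at each level $l$ so that $\phi_n^{(l)}=\tilde a+O(l)$ is uniformly integrable, extracts weak $L^1$ limits $\phi^{(l)},\psi^{(l)}$ via a diagonal argument, uses weak lower semicontinuity of the convex functional $\psi\mapsto\mathbb{E}_Q[e^{-\psi/\varepsilon}]$ at each level, and finally sends $l\to\infty$ by monotone convergence. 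Your averaging shortcut is more direct once Fatou is used, while the paper's truncation makes the compactness step (Dunford--Pettis rather than Komlós) entirely classical.
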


\begin{proof} Our assumption on $c$ insures that $\mathfrak{D}^c_\varepsilon(P\|Q)$ is finite and let us consider a optimizing sequence $(\phi_n,\psi_n)$ such that 
\begin{equation}
    \lim_{n \to \infty} \left(\mathbb{E}_P[\phi_n] -\varepsilon \log \mathbb{E}_Q[e^{-\varepsilon \psi_n/\varepsilon}]\right)= \mathfrak{D}^c_\varepsilon(P \|Q)
\end{equation}

As usual we can assume that $\phi_n=\psi_n^c$ and therefore we get the upper bounds
\begin{equation}
\begin{aligned}
\phi_n(x) &\le c(x,y_0)- \psi_n(y_0) \le a(x) +b(y_0)-\psi_n(y_0) \equiv\tilde{a}(x) \in L^1(P) \\
\psi_n(x) &\le c(x_0,y)- \phi_n(x_0) \le b(Y) +a(x_0)-\phi_n(x_0) \equiv\tilde{b}(y) \in L^1(Q) \\
\end{aligned}
\end{equation}
and so the sequence $\phi_n$ and $\psi_n$ are bounded above by $L^1$ function.  

Since $(\phi_n,\psi_n)$ is an optimizing sequence and using Jensen inequality \begin{equation}
\mathfrak{D}^c_\varepsilon(P\|Q) - \alpha< \mathbb{E}_P[\phi_n] - \varepsilon\ln \mathbb{E}_Q[e^{-\psi_n/\varepsilon}] \le \mathbb{E}_P[\phi_n] + E[\psi_n] \le T_c(P,Q) < \infty
\end{equation}
Together with the bound on $\phi_n$ and $\psi_n$ this implies that $\phi_n \in L^1(P)$ and $\psi_n \in L^1(Q)$.  Moreover since we can always replace $(\phi_n,\psi_n)$ by $(\phi_n + a_n, \psi_n-a_n)$ we can assume that the sequence $\{\phi_n\}$ and $\{\psi_n\}$  are bounded in $L^1(P)$ and $L^1(Q)$ respectively.

We show next that $\phi_n$ and $\psi_n$ have suitable limits. To do this let us set 
\begin{equation}
f_n = \tilde{a} - \phi_n \ge 0 \quad g_n = \tilde{b} -\psi_n \ge 0
\end{equation}
and $f_n \in L^1(P)$ and $g_n \in L^1(Q)$.  We truncate $f_n$ and $g_n$ by setting 
\begin{equation}
f_n^{(l)}= \max \{ f_n, l\} \quad  g_n^{(l)}= \max \{ g_n, l\} 
\end{equation}
Note that 
\begin{equation}
\begin{aligned}
0\le f_n^{(l)} \le l & \quad 
\text {and} \quad  f_n^{(l)} \le f_n^{(l+1)} \le \cdots \\
0\le g_n^{(l)} \le l & \quad 
\text {and} \quad g_n^{(l)} \le g_n^{(l+1)} \le \cdots 
\end{aligned}
\end{equation}
If we now set 
$\phi_n^{(l)} = \tilde{a} - f_{n}^{(l)}$ and  
$\psi_n^{(l)} = \tilde{b} - f_{n}^{(l)}$ we have 
\begin{equation}
\begin{aligned}
\tilde{a} &\ge \cdots \ge \phi_n^{(l)}\ge \phi_n^{(l+1)} \ge \cdots \ge \phi_n
\\
\tilde{b} &\ge \cdots \ge  \psi_n^{(l)}\ge \psi_n^{(l+1)} \ge \cdots \ge \psi_n
\end{aligned}
\end{equation}
and in particular
\begin{equation}\label{eq:mon}
\mathbb{E}_P[\phi_n^{(l)}] - \varepsilon \ln \mathbb{E}_Q[e^{-\psi_n^{(l)}/\varepsilon}]
\ge \mathbb{E}_P[\phi_n] - \varepsilon \ln \mathbb{E}_Q[e^{-\psi_n/\varepsilon}]
\end{equation}

Moreover since 
\begin{equation}
\phi_n^{(l)}= \tilde{a} + O(l)
\end{equation}
the sequence $\{\phi_n^{(l)}\}_{n \ge 1}$ is uniformly integrable in $L^1(P)$, hence weakly compact.  Up to extraction of a subsequence we can thus assume that $\phi_n^{(l)}$ converges to weakly in $L^1$ to some $\phi^{(l)}\in L^1(P)$.  By using a diagonal sequence extraction we can then assume that $\phi_n^{(l)}$ converges to $\phi^{(l)}$ for every $l$. Since weak convergence preserve ordering  we have $\tilde{a} \ge \phi^{(l)}\ge \phi^{(l+1)} \ge \cdots$.   
The same results holds of course for $\psi_n^{(l)}$.  

Recall the weak lower semicontinuity of convex functions  \cite{brezis1999analyse} (that is if $u_n$ converges weakly in $L^1$ to $u$ and $F$ is convex then $\int F(u) dQ \le \liminf_{n} \int F(u_n) dQ$). 
From this and from \eqref{eq:mon} we obtain
\begin{equation}
\begin{aligned}
\mathbb{E}_P[\phi^{(l)}] - \varepsilon \ln \mathbb{E}_Q[e^{-\psi^{(l)}/\varepsilon}] 
& \ge  \limsup_{n}\left( \mathbb{E}_P[\phi_{n}^{(l)}] - \varepsilon \ln \mathbb{E}_Q[e^{-\psi_{n_k}^{(l)}/\varepsilon}] \right) \\
& \ge  \limsup_{n}\left( \mathbb{E}_P[\phi_{n}] - \varepsilon \ln \mathbb{E}_Q[e^{-\psi_{n}/\varepsilon}] \right) \\
& = \mathfrak{D}^c_\varepsilon(P\|Q).
\end{aligned}
\end{equation}

Finally we note that the sequences $\phi^{(l)}$ and $\psi^{(l)}$ are bounded in $L^1$, are decreasing in $l$, and are bounded above by integrable functions. Therefore we can apply the monotone convergence 
and deduce the existence of  $L^1$ limits $\phi$ and $\psi$, defined almost everywhere 
\begin{equation}
\phi=\lim_{l\to \infty} \phi^{(l)}, \quad \psi=\lim_{l \to \infty} \psi^{(l)}
\end{equation}
By the monotone convergence theorem we obtain that 
\begin{equation}
\mathbb{E}_P[\phi] -\varepsilon \ln \mathbb{E}_Q[e^{-\psi/\varepsilon}] = \lim_{l \to \infty} \mathbb{E}_P[\phi^{(l)}] - \varepsilon \ln \mathbb{E}_Q[e^{-\psi^{(l)}/\varepsilon}] \ge 
\mathfrak{D}^c_\varepsilon(P \|Q)
\end{equation}
\end{proof}

\section{Proof of DPI and additivity in Section \ref{sec:DPI} }
\label{sec:proofs:DPI}

We first give the proof of the invariance of the proximal $p$-Wasserstein divergence under isomorphism.

\noindent
{\it Proof of Theorem \ref{thm:isomorphism.invariance}}  
We use the dual variational representation.  Since $c(Tx,Ty)=c(x,y)$, $\phi \oplus \psi \le c$ implies that $\phi \circ T \oplus \psi \circ T \le c$. Therefore 
\begin{equation}
\begin{aligned}
 \mathfrak{D}^{p}_{\textrm{KL},\varepsilon}( T_\# P\| T_\# Q) &= \sup_{\phi \oplus \psi \le c} \mathbb{E}_P[\phi \circ T] - \varepsilon \ln \mathbb{E}_Q[e^{-\psi\circ T/\varepsilon}] \\
 &\le \sup_{\phi'  \oplus \psi' \le c} \mathbb{E}_P[\phi' ] - \varepsilon \ln \mathbb{E}_Q[e^{-\psi'/\varepsilon}]\\
 & = \mathfrak{D}^{p}_{\textrm{KL},\varepsilon}(  P\| Q).
\end{aligned}
\end{equation}
We have inequality since in the first supremum we only consider $\phi'$ and $\psi'$ which have the form $\phi'=\phi \circ T$ and $\psi'=\psi\circ T$. Since $T$ is 
assumed to be invertible however, we obtain equality.
\qed

The proof of the data processing inequality in Theorem \ref{thm:dpi} follows by a similar argument and is therefore omitted. Finally we prove the additivity  
property of the proximal OT divergence. 
\medskip

\noindent{\it Proof of Theorem \ref{thm:additivity}}
Since $c(x,y)=\|x-y\|_p^p=\|x_1-y_1\|_p^p + \|x_2-y_2\|_p^p=c_1(x_1,y_1)+c_2(x_2,y_2)$ let us use the primal representation for optimal cost and for the proximal OT-divergence and restrict both infima to product measures and product coupling.  By using the additivity of relative entropy we obtain then the lower bound
\begin{equation}
\begin{aligned}
& \mathfrak{D}^{p}_{\textrm{KL},\varepsilon}( P_1 \times P_2  \| Q_1\times Q_2) \\ &\le \inf_{R=R_1\times R_2} \left\{W_p^p( P_1 \times P_2, R_1 \times R_2) + \varepsilon \mathfrak{D}(R_1 \times R_2\| Q_1\times Q_2) \right\}\\
&\le \inf_{ R_1, R_2}  \inf_{\pi_1 \in \Pi(P_1,R_1) \atop
\pi_2 \in \Pi(P_2,R_2) } 
\bigg{\{}\int_{X_1} \|x_1-y_1\|_p^p  d\pi_1 + \int_{X_2} \|x_2-y_2\|_p^p  d\pi_2  \\
& \hspace{5cm}  + \varepsilon \mathfrak{D}(R_1 \|Q_1) + \varepsilon \mathfrak{D}( R_2\|  Q_2) \bigg{\}} \\
&=   \mathfrak{D}^{p}_{\textrm{KL},\varepsilon}( P_1  \| Q_1 ) +   \mathfrak{D}^{p}_{\textrm{KL},\varepsilon}( P_2  \| Q_2 )
\end{aligned}
\end{equation}
On the other hand using the dual variational representation and considering  only  $\phi$ and $\psi$  of the form $\phi(x)=\phi_1(x_1)+ \phi_2(x_2)$ and 
$\psi(y)=\psi_1(y_1)+ \psi_2(x_2)$  with  $\phi_1(x_1)+\psi_1(x_1) \le \|x_1-y_1\|_p^p$ and $\phi_2(x_2)+\psi_2(x_2) \le \|x_2-y_2\|_p^p$. We have then the upper bound
\begin{equation}
\begin{aligned}
& \mathfrak{D}^{p}_{\textrm{KL},\varepsilon}( P_1 \times P_2  \| Q_1\times Q_2) \\ &\ge \sup_{\phi_1 \oplus \psi_1 \le c_1 \atop \phi_2 \oplus \psi_2 \le c_2  } \left\{ \mathbb{E}_{P_1 \times P_2}[\phi_1 + \phi_2]  - \varepsilon
\log \mathbb{E}_{Q_1 \times Q_2} [ e^{-(\psi_1+\psi_2)/\varepsilon}]\right\} 
\\
&= \sup_{\phi_1 \oplus \psi_1 \le c_1 } \left\{ \mathbb{E}_{P_1}[\phi_1]   - \varepsilon
\log \mathbb{E}_{Q_1}  [ e^{-(\psi_1)\varepsilon}] \right\} \\
& \hspace{1cm} + \sup_{ \phi_2 \oplus \psi_2 \le c_2  } \left\{ \mathbb{E}_{P_2}[ \phi_2]  - \varepsilon
\log \mathbb{E}_{Q_2} [ e^{-\psi_2/\varepsilon}] \right\}
\\
&=   \mathfrak{D}^{p}_{\textrm{KL},\varepsilon}( P_1  \| Q_1 ) +   \mathfrak{D}^{p}_{\textrm{KL},\varepsilon}( P_2  \| Q_2 )
\end{aligned}
\end{equation}
This concludes the proof. \qed

\section{Proof of Theorem \ref{thm:first variation}}\label{sec:proofs:firstvariation}
We use the Jordan decomposition $\rho=\rho_+-\rho_-$ where $\rho_{\pm}\in \mathcal{P}(X)$ are mutually singular so there exists two disjoint sets $X_\pm$ such that $\rho_\pm(A)=\rho_\pm(A \cap X_\pm)$
for all measurable sets $A$. The measure $P+ \alpha(\rho_+-\rho_-)$ has total mass $1$ but to be a probability measure we need that
$\alpha \rho_-(A) \le (P+\alpha \rho_+)(A)$ holds for all $A$. This implies that $\rho_-$ is absolutely continuous with respect to $P$.  Indeed if $P(A)=0$ then 
\begin{equation}\label{eq:perturb:abs_contin}
\alpha \rho_-(A)=\alpha \rho_-(A \cap X_-) \le P(A\cap X_-) + \alpha \rho_+(A \cap X_-) \le P(A) =0.   
\end{equation}

By Theorem \ref{thm:divergence} the map 
$F(\alpha)= D_{c,\textrm{KL}}^\varepsilon(P + \alpha \rho\|Q)$ is lower semicontinuous and convex. By assumption it is also finite (and thus also continuous) on the interval $[0,\alpha_0)$  As a 
finite convex function on an interval the map $F(\alpha)$ has left derivative $F'_-(\alpha)$ and right derivative $F'_+(\alpha)$ at every point $\alpha \in (0,\alpha_0)$ and the right derivative $F_+'(0)$ exists. 
Moreover the derivative $F'(\alpha)$ exists  and $\lim_{\alpha \searrow 0} F_-'(\alpha)=F'_+(0)$.

For $0< \alpha < \alpha_0$ let us denote $(\phi^*_\alpha, \psi^*_{\alpha})$ the optimizers for 
the dual representation of $D_{c,\textrm{KL}}^\varepsilon(P + \alpha \rho\|Q)$  which are unique up to constant in $\textrm{supp}{P + \rho_+}$ and $\textrm{supp}(Q)$ respectively.  For $h$ sufficiently small we have 
\begin{equation}
\begin{aligned}
F(\alpha \pm h)&= \sup_{(\phi, \psi) \in \Phi_c} \left\{ \mathbb{E}_{P+(\alpha\pm h)\rho}[\phi] - \varepsilon \log \mathbb{E}_{Q}[e^{-\psi/\varepsilon}] \right\} \\
& \ge \mathbb{E}_{P+(\alpha\pm h)\rho}[\phi^*_\alpha] -  \varepsilon \log \mathbb{E}_{Q}[e^{-\psi^*_\alpha/\varepsilon}]
\\
&= F(\alpha)  \pm  h \int \phi^*_\alpha d \rho 
\end{aligned}
\end{equation}
from which we conclude that 
\begin{equation}\label{eq:falphapm}
F'_{-}(\alpha) \le \int \widehat{\phi}^*_\alpha d \rho  \le F'_{+}(\alpha)\,.
\end{equation}
For $\alpha=0$ a similar computation using the optimizer $\widehat{\phi}^*$ defined in \eqref{eq:phistarextension} we find that 
\begin{equation}\label{eq:fprimepluslower}
F'_{+}(0) \ge \int  \widehat{\phi}^* d \rho \,.
\end{equation}
Let us now pick a sequence $\alpha_n \searrow  0$ such that $F$ is differentiable at all $\alpha_n$. By \eqref{eq:falphapm} we have 
\begin{equation}
F'_+(0)=\lim_{n\to \infty} F'(\alpha_n) = \lim_{n \to \infty}\int \widehat{\phi}^*_{\alpha_n} d \rho
\end{equation}

Since $c$ is bounded, arguing as in Theorem \ref{thm:duality.bounded} (see equations \eqref{eq:boundbyc1} and \eqref{eq:boundbyc2}) 
the sequences $(\widehat{\phi}^*_{\alpha_n}$ and $\psi^*_{\alpha_n})$
are  equibounded (by $\|c\|_\infty$).   

Since $c$ is uniformly continuous it has a modulus of continuity, i.e. $|c(x,y)=c(x',y')|\le \omega(\mathfrak{D}(x,x')+ d(y,y'))$
for some function $\omega: \mathbb{R}^+\to \mathbb{R^+}$ which satisfies $\omega(0)=0$ and is continous at $0$. Since we can assume that $\widehat{\phi}^*_{\alpha_n}$ and $\psi^*_{\alpha_n}$ are $c$-transform of each other, they inherit the modulus of continuity from $c$. By an Arzela-Ascoli type argument this implies that the sequences $\widehat{\phi}^*_{\alpha_n}$ and 
$\psi^*_{\alpha_n}$ converge, uniformly on compact sets, to some functions $\widetilde{\phi}$ and $\widetilde{\psi}$.    
We have then, by lower semicontinuity,
\begin{equation}
\begin{aligned}
 \mathfrak{D}^{c}_{\textrm{KL},\varepsilon}(P \|Q) &= \lim_{n \to \infty}
 \mathfrak{D}^{c}_{\textrm{KL},\varepsilon}(P + \alpha_{n} \rho\|Q) \\
& =\lim_{n \to \infty} 
\left\{ \mathbb{E}_{P + \alpha_n\rho}[\widehat{\phi}^*_{\alpha_n}] - \varepsilon \log \mathbb{E}_Q[e^{-\psi^*_{\alpha_n}/\varepsilon}] \right\} \\
& =  \mathbb{E}_P[\widetilde{\phi}] - \varepsilon \log \mathbb{E}_Q[e^{-\widetilde{\psi}/\varepsilon}] \\
& \le  \mathfrak{D}^{c}_{\textrm{KL},\varepsilon}(P \|Q)
\end{aligned}
\end{equation}
and thus $(\widetilde{\phi}, \widetilde{\psi})$ must be an optimizer.  By definition  $\widetilde{\phi}(x)=\widehat{\phi}^*(x) $ for $x \in \textrm{supp}(P)$ and $\widetilde{\phi}(x)\le \widehat{\phi}^*(x)$ for all $x\in X$. 
Therefore, using that $\rho_-$ is absolutely continuous with respect to $P$ 
\begin{equation}
F'_+(0)= \int \widetilde{\phi}(x) d\rho = \int \widetilde{\phi}(x) d\rho_+ - \int \widetilde{\phi}(x) d\rho_- \le \int  \widehat{\phi}^* d\rho 
\end{equation}
Combine with \eqref{eq:fprimepluslower} this shows that $F'_+(0)=\int  \widehat{\phi}^* d\rho$.

\section{Example: Proximal OT divergences for Gaussians}
\label{sec:Gaussians}

In this section, we derive an explicit expression for the proximal 2-Wasserstein divergence between two Gaussian distributions. We begin with the one-dimensional case and then extend the analysis to the multivariate setting with commuting covariance matrices.

\begin{theorem}\label{thm:1dgaussian}
    Let $P = \mathcal{N}(m_1,\sigma_1^2)$ and  $Q = \mathcal{N}(m_2,\sigma_2^2)$. 
    Then the OT-divergence made by 2-Wasserstein and the KL-divergence is given by choosing the intermediary distribution to be the Gaussian distribution 
    $R = \mathcal{N}(m_R,\sigma_R^2)$ with
\begin{eqnarray}
    m_R &=& \frac{m_1 + \frac{\varepsilon}{2 \sigma_2^2} m_2}{1 +\frac{\varepsilon}{2 \sigma_2^2} } \label{eq:meanR} \\
    \sigma_R &=& \frac{\sigma_1 + \sqrt{ \sigma_1^2 + 2 \varepsilon(1 + \frac{\varepsilon}{2 \sigma_2^2}) }}{2(1 +\frac{\varepsilon}{2 \sigma_2^2})}\label{eq:sigmaR}
\end{eqnarray}    
and the proximal OT divergence takes the form 
\begin{equation}\label{eq:d2r}
\mathfrak{D}^2_{\mathrm{KL},\varepsilon}(P\|Q) = \frac{\frac{\epsilon}{2 \sigma_2^2}}{1+\frac{\epsilon}{2 \sigma_2^2}}(m_1-m_2)^2 + \sigma_1 (\sigma_1-\sigma_R) + \varepsilon \ln\frac{\sigma_{2}}{\sigma_R}
\end{equation}
\end{theorem}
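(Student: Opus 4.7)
The plan is to restrict the infimum in \eqref{eq:def:OTdivergence} to the Gaussian family, solve the resulting two-parameter optimization in closed form, and then verify via duality that this Gaussian candidate is the unique global minimizer guaranteed by Theorem~\ref{thm:proximal}. Substituting $R=\mathcal{N}(m_R,\sigma_R^2)$ into the objective using the 1D Wasserstein identity $W_2^2(P,R) = (m_1-m_R)^2 + (\sigma_1-\sigma_R)^2$ together with
\[
\mathfrak{D}_{\mathrm{KL}}(R\|Q) = \log\tfrac{\sigma_2}{\sigma_R} + \tfrac{\sigma_R^2+(m_R-m_2)^2}{2\sigma_2^2} - \tfrac12
\]
yields an objective $F(m_R,\sigma_R)$ whose two first-order conditions decouple. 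The equation $\partial_{m_R}F=0$ is linear and delivers \eqref{eq:meanR} directly, while $\sigma_R\,\partial_{\sigma_R}F=0$ reduces to
\[
\left(1+\tfrac{\varepsilon}{2\sigma_2^2}\right)\sigma_R^2 - \sigma_1\sigma_R - \tfrac{\varepsilon}{2}=0,
\]
whose unique positive root is \eqref{eq:sigmaR}. Strict convexity of $F$ on $\mathbb{R}\times(0,\infty)$ ensures this critical point is the unique Gaussian minimizer.

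To lift the Gaussian-restricted minimum to the global one, I would construct an admissible dual pair closing the duality gap in Theorem~\ref{thm:duality.Polish}. With $R^{\mathrm{cand}} := \mathcal{N}(m_R,\sigma_R^2)$ from the previous step, define $\psi^*(y)$ via $\psi^*(y) = -\varepsilon\log(dR^{\mathrm{cand}}/dQ)(y)$ up to an additive constant; this is a quadratic polynomial because $R^{\mathrm{cand}}$ and $Q$ are both Gaussian. Setting $\phi^*(x) = \inf_y\{(x-y)^2 - \psi^*(y)\}$, the stationarity condition in $y$ is linear, so the infimum is attained at an affine $y^*(x)$ and $\phi^*$ is again quadratic, with $\phi^*\oplus\psi^*\le c$ by construction. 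Using the Gibbs identity \eqref{eq:gibbs} at the attainer $R^{\mathrm{cand}}$ and Brenier's theorem (which produces exact OT duality between the Gaussians $P$ and $R^{\mathrm{cand}}$), one checks
\[
\mathbb{E}_P[\phi^*] - \varepsilon\log\mathbb{E}_Q[e^{-\psi^*/\varepsilon}] = W_2^2(P,R^{\mathrm{cand}}) + \varepsilon\mathfrak{D}_{\mathrm{KL}}(R^{\mathrm{cand}}\|Q),
\]
so the primal and dual values coincide; by the uniqueness assertion of Theorem~\ref{thm:proximal} this forces $R^*=R^{\mathrm{cand}}$.

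Finally, to obtain the closed form \eqref{eq:d2r} I would back-substitute $m_R$ and $\sigma_R$ into $F$ and use the first-order conditions as algebraic simplifiers. The mean contribution $(m_1-m_R)^2 + \tfrac{\varepsilon}{2\sigma_2^2}(m_R-m_2)^2$ collapses via $m_1-m_R = \tfrac{\varepsilon/(2\sigma_2^2)}{1+\varepsilon/(2\sigma_2^2)}(m_1-m_2)$ and $m_R-m_2=(m_1-m_2)/(1+\varepsilon/(2\sigma_2^2))$ to the first term of \eqref{eq:d2r}. The variance pieces $(\sigma_1-\sigma_R)^2 + \tfrac{\varepsilon\sigma_R^2}{2\sigma_2^2} - \tfrac{\varepsilon}{2}$ collapse using the quadratic relation $\tfrac{\varepsilon\sigma_R^2}{2\sigma_2^2} = \sigma_1\sigma_R + \tfrac{\varepsilon}{2} - \sigma_R^2$ to the compact expression $\sigma_1(\sigma_1-\sigma_R)$, leaving the entropy term $\varepsilon\log(\sigma_2/\sigma_R)$ intact.

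The main obstacle is the global-optimality step; once the Gaussian ansatz is secured, the remaining calculus and algebra are routine. An alternative route to the same end is to compute the first variation of the functional directly on $\mathcal{P}_2(\mathbb{R})$, observe that it equals $\widehat{\phi}^* + \varepsilon\log(dR/dQ) + \mathrm{const}$ where $\widehat{\phi}^*$ is the OT potential from Theorem~\ref{thm:first variation}, and note that for Gaussian $R$ this potential is quadratic, so the stationarity equation forces $dR/dQ$ to be a ratio of Gaussian densities, i.e.~Gaussian—providing an independent verification that the Gaussian critical point is indeed stationary in all admissible directions.
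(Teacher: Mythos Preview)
Your proposal is correct and follows essentially the same strategy as the paper: restrict the infimum to Gaussians, solve the decoupled two-parameter optimization via the first-order conditions to obtain \eqref{eq:meanR}--\eqref{eq:sigmaR}, and then certify global optimality by exhibiting a quadratic dual pair $(\phi^*,\psi^*)$---with $\psi^*$ chosen so that $dR^{\mathrm{cand}}/dQ\propto e^{-\psi^*/\varepsilon}$ and $\phi^*=(\psi^*)^{\bar c}$---that saturates the variational representation of Theorem~\ref{thm:duality.Polish}. The paper carries out the dual verification only for the centered Gaussians (handling the means by the direct primal computation) and makes explicit use of the stationarity relation \eqref{eq:stationarity2} to show that the $c$-transform of $\psi^*$ really is the Kantorovich potential for $W_2^2(P,R^{\mathrm{cand}})$; in your write-up this is precisely the step hidden behind ``using Brenier's theorem\ldots one checks,'' so you should make that substitution explicit rather than appeal to Brenier abstractly.
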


\noindent We illustrate Theorem \ref{thm:1dgaussian}
in Figure \eqref{fig:1dgaussian} by displaying the intermediate measure $R=R_\varepsilon$ for several values of $\varepsilon$.

\begin{figure}[h] 
\centering 
\includegraphics[width=0.6\textwidth]{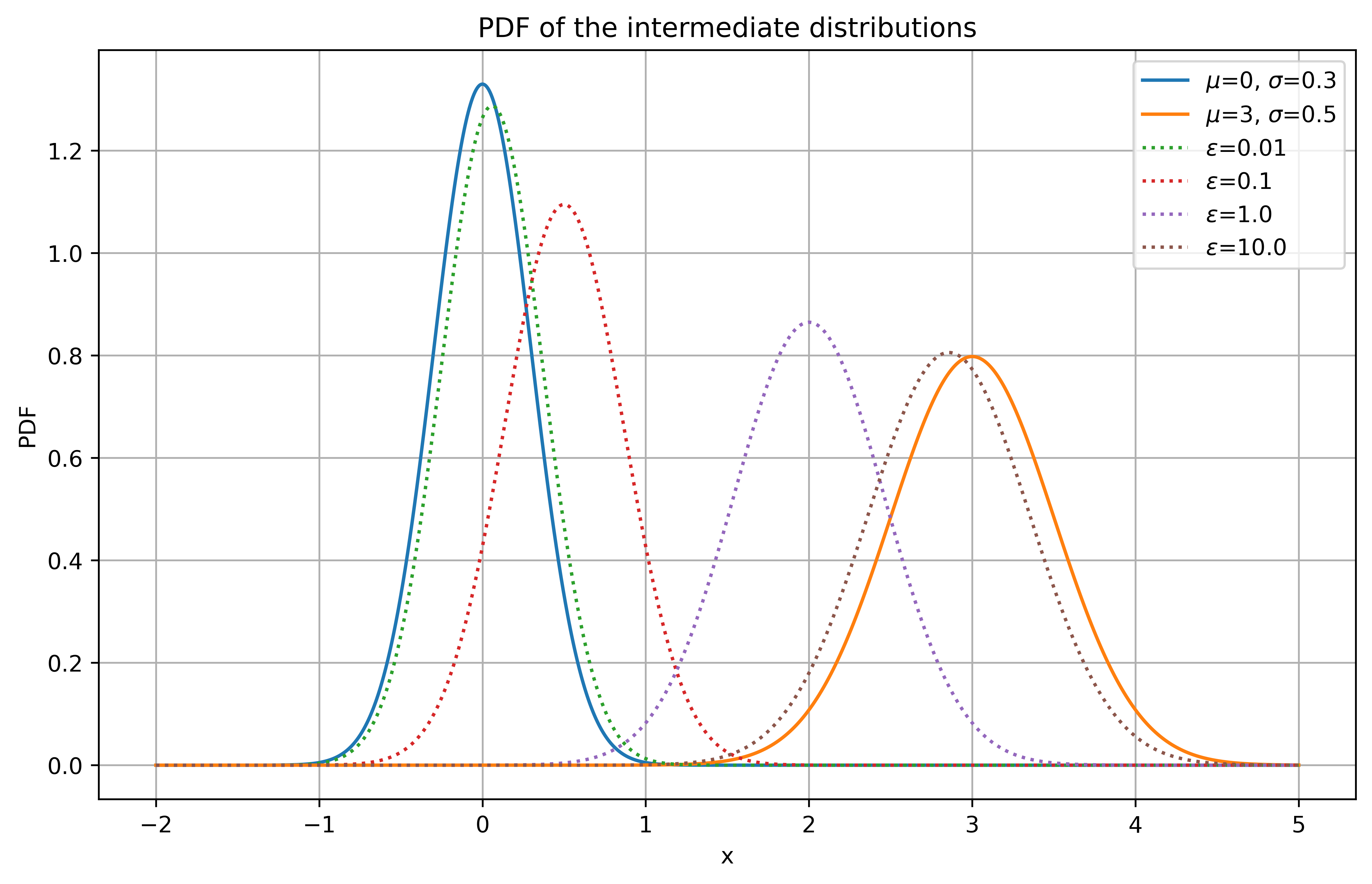} 
\caption{Intermediate measure for several values of $\varepsilon$ for 1-d Gaussian.} % Add a caption to your figure.
    \label{fig:1dgaussian} 
\end{figure}

\begin{proof}
The idea is that 
the intermediate measure $R$ should be Gaussian
itself.  If this is the case, using the explicit formulas for the 2-Wasserstein distance and the KL-divergence between Gaussian distributions we find 
\begin{equation}
\begin{aligned}
& W_2^2(P,R) + \varepsilon \mathfrak{D}_{\mathrm{KL}}(R\|Q) \\
&= (m_1 - m_R)^2 + (\sigma_1 - \sigma_R)^2 + 
\varepsilon \left(
\ln\frac{\sigma_2}{\sigma_R} + \frac{1}{2}\left( \frac{\sigma_R^2 +(m_R - m_2)^2}{\sigma_2^2} - 1
\right) \right)\\
\end{aligned}
\end{equation}
Optimizing over $m_R$ and $\sigma_R$ along with straightforward computations, yields the expressions in \eqref{eq:meanR}--\eqref{eq:d2r}. To establish that the infimum is attained by an intermediate Gaussian distribution, we rely on the dual variational representation \eqref{eq:OTdivergence:dual}To get a lower bound on the OT divergence we take the infimum over gaussian  $R=\mathcal{N}(m_R,\sigma_R)$.  We denote by $\tilde{P} = \mathcal{N}(0,\sigma_1)$
the shifted version of $P$
and similarly  for $\tilde{Q}$ and $\tilde{R}$.

We have 
\begin{equation}
\begin{aligned}
& W_2^2(P,R) + \varepsilon \mathfrak{D}_{\mathrm{KL}}R\|Q) \\
&= (m_1 - m_R)^2 + (\sigma_1 - \sigma_R)^2 + 
\varepsilon \left(
\ln\frac{\sigma_2}{\sigma_R} + \frac{1}{2}\left( \frac{\sigma_R^2 +(m_R - m_2)^2}{\sigma_2^2} - 1
\right) \right)\\
& = (m_1 - m_R)^2 + \varepsilon \frac{(m_R - m_2)^2}{\sigma_2^2}
+ W_2^2(\tilde{P},\tilde{R}) +\varepsilon \mathfrak{}\tilde{R}\|\tilde{Q})
\end{aligned}
\end{equation}
as a sum of two non-negative terms and so we can optimize over the mean $m_R$ and the variance $\sigma_R$ separately. Optimizing over $m_R$ gives \eqref{eq:meanR}
and 
\[
(m_1 - m_R)^2 + \varepsilon \frac{(m_R - m_2)^2}{\sigma_2^2}= \frac{\frac{\epsilon}{2 \sigma_2^2}}{1+\frac{\epsilon}{2 \sigma_2^2}}(m_1-m_2)^2
\]

Minimzing over $\sigma_R$ gives the
stationarity condition
\begin{equation}\label{eq:stationarity1}
2 (\sigma_R - \sigma_1) - \varepsilon \frac{1}{\sigma_R} + \varepsilon \frac{\sigma_R}{\sigma_2^2} =0 
\end{equation}
This gives a quadratic equation, and taking the positive root we find 
\eqref{eq:sigmaR}.
Note also that the stationarity condition can be rewritten as 
\begin{equation}\label{eq:stationarity2}
\frac{\varepsilon}{2}\left( \frac{\sigma_R^2}{\sigma_2^2} -1\right) = \sigma_1 \sigma_R - \sigma_R^2
\end{equation}
and so we find that for the optimal $\sigma_R$ we have 
\begin{equation}\label{eq:optimalD}
W_2^2(\tilde{P},\tilde{R})+ \varepsilon \mathfrak{} \tilde{R}\|\tilde{Q})
=  \sigma_1 (\sigma_1 - \sigma_R) + \epsilon \ln \frac{\sigma_2}{\sigma_R} \,.
\end{equation}

To find an upper bound on the OT divergence we use the dual representation. Since we are dealing with Gaussian  it is natural to ake $\phi(x)$ and $\psi(y)$ to be quadratic polynomial and since we can assume that the mean is $0$ we can simply take quadratic. In view of Theorem \ref{thm:duality.Polish} 
we choose to write $\psi$ as
\begin{equation}
\psi(y) =  \varepsilon\left(\frac{y^2}{2 \sigma_R^2} -\frac{y^2}{2 \sigma_2^2}
\right)\end{equation}
Next, we show that if $\sigma_R$ satisfies the stationary condition \eqref{eq:stationarity1} 
then 
\[
E_{\tilde{P}}[\psi^c(x)] - \varepsilon \ln \mathbb{E}_{\tilde{Q}}[ e^{-\psi(y)/\varepsilon}]= 
W_2^2(\tilde{P},\tilde{R}) + \varepsilon \mathfrak{}\tilde{R}\|\tilde{Q})
\]
holds which completes the proof.  First note that 
\[
E_{\tilde{Q}}[e^{-\psi/\varepsilon}]
= \frac{1}{\sqrt{2 \pi \sigma_2^2}} \int e^{-y^2/2 \sigma_{R}^2} dy = \varepsilon\ln\frac{\sigma_R}{\sigma_2}
\]
so that $-\varepsilon \log \mathbb{E}_{\tilde{Q}}[e^{-\psi/\varepsilon}] = \varepsilon \log \frac{\sigma_2}{\sigma_R}$ which is the second term in \eqref{eq:optimalD}. 
Computing the $c$ transform of $\psi(y)=a y^2$ we find 
\begin{equation}
\psi^c(x)= \inf_{y}\{ (x-y)^2 - a y^2\} = x^2 - \frac{1}{1-a} x^2 
\end{equation}
for our choice of $a$, if we use the stationarity condition in the form of \eqref{eq:stationarity2}  
\[
a = \varepsilon\left(\frac{1}{2 \sigma_R^2} -\frac{1}{2 \sigma_2^2}
\right) = \frac{1}{\sigma_R^2} \frac{\epsilon}{2}\left( 1 - \frac{\sigma_R^2}{\sigma_2^2} \right)=\frac{\sigma_R-\sigma_1}{\sigma_R}
\]
and thus $1-a = \frac{\sigma_1}{\sigma_R}$. Using this we obtain
\[
E_{\tilde{P}}[\psi^c(x)]= \sigma_1^2 - \frac{1}{1-a} \sigma_1^2
=\sigma_1^2 - \sigma_R \sigma_1
\]
which is the first term in  \eqref{eq:optimalD}. 
This concludes the proof.

\end{proof}

For multivariate Gaussians with commuting covariance matrices, the sequence of calculations from the one-dimensional case extends directly. The general case of non-commuting covariances is left for future work.

\begin{theorem}\label{thm:commuting.gaussian}
    Let $P = \mathcal{N}(m_1,\Sigma_1)$ and $Q = \mathcal{N}(m_2,\Sigma_2)$, where $\Sigma_1$ and $\Sigma_2$  commute. Then the proximal OT divergence $\mathfrak{D}^2_{\mathrm{KL},\varepsilon}$
    is given by choosing the intermediary distribution to be the Gaussian
    $R = \mathcal{N}(m_R,\Sigma_R)$ with %$\Sigma_R = P\Lambda_R P^T$
    %where 
    \begin{equation}
    \begin{aligned}
    m_R &= \left( I + \frac{ \varepsilon}{2}\Sigma_2^{-1}\right)^{-1}\left( m_1+ \frac{\varepsilon}{2}\Sigma_2^{-1}\mu_2 + \right) \\
    \Sigma_R^{1/2}&= \left( I + \frac{ \varepsilon}{2}\Sigma_2^{-1}\right)^{-1} \frac{1}{2}
    \left( \Sigma_1^{1/2} + 
    \left( \Sigma_1 + 2\varepsilon \left( I + \frac{ \varepsilon}{2}\Sigma_2^{-1}\right)^{-1}\right)^{1/2}
    \right) 
 .  \end{aligned}
    \end{equation}
\end{theorem}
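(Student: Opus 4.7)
The plan is to reduce directly to the one-dimensional case via simultaneous diagonalization together with the additivity identity of Theorem \ref{thm:additivity}. Because $\Sigma_1$ and $\Sigma_2$ commute there is an orthogonal matrix $U$ diagonalizing both simultaneously; by the isometry invariance in Theorem \ref{thm:isomorphism.invariance} I may replace $P,Q$ by their pushforwards under $U^{-1}$ and assume without loss of generality that $\Sigma_1$ and $\Sigma_2$ are diagonal. In this basis $P$ and $Q$ factor as product measures over the coordinate axes, $P=\bigotimes_{i=1}^d P_i$ and $Q=\bigotimes_{i=1}^d Q_i$ with $P_i=\mathcal{N}((m_1)_i,(\Sigma_1)_{ii})$ and $Q_i=\mathcal{N}((m_2)_i,(\Sigma_2)_{ii})$.

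Applying Theorem \ref{thm:additivity} yields
\begin{equation}
\mathfrak{D}^2_{\mathrm{KL},\varepsilon}(P\|Q) \;=\; \sum_{i=1}^d \mathfrak{D}^2_{\mathrm{KL},\varepsilon}(P_i\|Q_i),
\end{equation}
and Theorem \ref{thm:1dgaussian} identifies each scalar summand together with its unique Gaussian minimizer $R^*_i$ given by \eqref{eq:meanR}--\eqref{eq:sigmaR}. The uniqueness of the proximal minimizer (Theorem \ref{thm:proximal}) forces the multivariate minimizer to be the product $R^*=\bigotimes_i R^*_i$, which is a Gaussian $\mathcal{N}(m_R,\Sigma_R)$ whose diagonal entries are precisely the scalar optimizers. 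The claimed matrix identities for $m_R$ and $\Sigma_R^{1/2}$ then follow by re-expressing these entrywise identities through the functional calculus of $\Sigma_1$, $\Sigma_2$, and $B:=I+\tfrac{\varepsilon}{2}\Sigma_2^{-1}$, and transforming back via $U$, using that all the operators involved respect orthogonal conjugation.

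The only genuine work is to verify that the scalar formulas from Theorem \ref{thm:1dgaussian} aggregate into the stated matrix expressions. This is essentially a spectral-theorem argument: in the common eigenbasis every operator appearing---$\Sigma_1,\Sigma_2,B,B^{-1}$ and $\Sigma_R^{1/2}$---is diagonal, and each diagonal entry is the scalar formula applied to the corresponding eigenvalues. The linear identity for $m_R$ promotes immediately, while the quadratic matrix equation satisfied by $A:=\Sigma_R^{1/2}$, namely $BA=\Sigma_1^{1/2}+\tfrac{\varepsilon}{2}A^{-1}$, reduces under commutativity to the scalar quadratic solved in Theorem \ref{thm:1dgaussian}. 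The principal obstacle to extending the argument beyond commuting covariances is that the cross term $(\Sigma_1^{1/2}\Sigma_R\Sigma_1^{1/2})^{1/2}$ in the Gaussian $W_2^2$ no longer simplifies to $\Sigma_1^{1/2}\Sigma_R^{1/2}$, so the covariance objective ceases to decouple along eigenvectors and no direct reduction to Theorem \ref{thm:1dgaussian} is available.
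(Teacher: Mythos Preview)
Your proposal is correct and follows essentially the same approach as the paper's proof: simultaneous diagonalization via the commutativity hypothesis, reduction to diagonal covariances by the isometry invariance of Theorem \ref{thm:isomorphism.invariance}, and then the additivity property of Theorem \ref{thm:additivity} combined with the one-dimensional result in Theorem \ref{thm:1dgaussian}. You supply a bit more detail than the paper does---in particular the appeal to uniqueness in Theorem \ref{thm:proximal} to pin down the product form of $R^*$ and the functional-calculus remark for reassembling the matrix formulas---but the underlying route is identical.
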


\begin{proof} Since $\Sigma_1$ and $\Sigma_2$ commute, we find an orthogonal transformation which diagonalize $\Sigma_1$ and $\Sigma_2$ simultaneously.  there exists an orthogonal transformation that simultaneously diagonalizes both matrices. By Theorem \ref{thm:isomorphism.invariance}, we may therefore assume without loss of generality that both $\Sigma_1$ and $\Sigma_2$ are diagonal, implying that $P$ and $Q$ are product measures. The result then follows from the additivity property (Theorem \ref{thm:additivity}) combined with the one-dimensional case established in Theorem \ref{thm:1dgaussian}.
\end{proof}

\begin{figure}[h] 
\centering 
\includegraphics[width=0.6\textwidth]{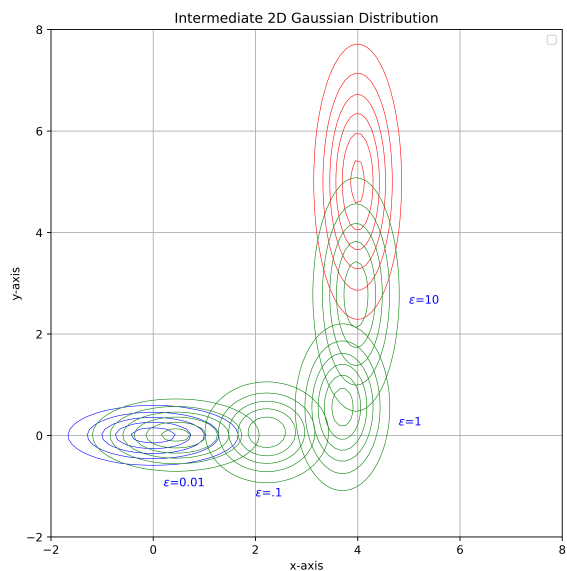} 
\caption{Intermediate measures for 2D Gaussian for several values of $\varepsilon$.} % Add a caption to your figure.
    \label{fig:2dgaussian} % Add a label to reference the figure later.
\end{figure}

\end{document}